\newtheoremstyle{plain}{\topsep}{\topsep}{\slshape}{}{\bfseries}{.}{.5em}{}
\newtheorem{theorem}{Theorem}
\newtheorem{lemma}[theorem]{Lemma}
\newtheorem{proposition}[theorem]{Proposition}
\newtheorem{mainthm}{Theorem}
\theoremstyle{remark}
\newtheorem{remark}[theorem]{Remark}
\numberwithin{equation}{section}
\newtheorem{definition}[theorem]{Definition}
\def\R{{\mathbb R}}
\def\G{\mathcal{G}}
\def\H{\mathcal{H}}
\def\e{\varepsilon}
\def\E{\widetilde{E}}
\def\EE{\mathcal{E}}
\def\({\left(}
\def\){\right)}
\def\supp{\mathrm{supp \,}}
\def\S{\mathcal{S}}
\def\T{\mathcal{T}}
\def\SS{\widetilde\S}
\def\Z{\mathcal{Z}}
\DeclareMathOperator*{\esssup}{ess\,sup}
\begin{document}

\title{Quantitative partial regularity of the Navier-Stokes equations and applications}
\author{ Zhen Lei\footnotemark[1]  \and Xiao Ren\footnotemark[2] }
\renewcommand{\thefootnote}{\fnsymbol{footnote}}
\footnotetext[1]{School of Mathematical Sciences, LMNS and Shanghai
Key Laboratory for Contemporary Applied Mathematics, Fudan University, China.  \ \ \ \ Email: zlei@fudan.edu.cn}
\footnotetext[2]{School of Mathematical Sciences,
Fudan University, Shanghai 200433, P. R.China. \ \ \ \ Email:  xiaoren18@fudan.edu.cn}
\maketitle

\begin{abstract} We prove a logarithmic improvement of the Caffarelli-Kohn-Nirenberg partial regularity theorem  for the Navier-Stokes equations. The key idea is  to find a quantitative counterpart  for the absolute continuity of the dissipation energy using the pigeonhole principle. Based on the same method, for any suitable weak solution, we show the existence of intervals of regularity in one spatial direction with length depending exponentially on the natural local energies of the solution. Then, we give two applications of the latter result in the axially symmetric case. The first one is a  local quantitative regularity criterion for suitable weak solutions with small swirl. The second one is a slightly improved one-point CKN criterion which implies all known (slightly supercritical) Type I regularity results in the literature.
\end{abstract}

\section{Introduction}
We study the incompressible Navier-Stokes equations in three dimensions:
\begin{equation}  \tag{NS}\label{eq-NS}
\left\{
\begin{aligned}
  \partial_t v - \Delta v + (v \cdot \nabla) v + \nabla p &= 0,\\
  \nabla \cdot v &= 0,\\
\end{aligned}
\right.
\end{equation} 
where $v,p$ are the unknown velocity and pressure fields. Whether the solutions to  \eqref{eq-NS} can develop finite time singularities from regular initial data remains a question of central importance in the theory of partial differential equations. It has been called one of the seven most important unsolved problems by the Clay Mathematics Institute, see \cite{FeffermanClay}. 

In 1934, based on the natural energy structure of \eqref{eq-NS}, Leray \cite{Leray1934} constructed global weak solutions for the Cauchy problem of \eqref{eq-NS} with finite kinetic energy $\sup_{t\ge 0} \int_{\R^3} |v|^2 dx$ and finite dissipation energy (Dirichlet integral) 
$\int_0^{+\infty} \int_{\R^3} |\nabla v|^2 dx dt.$ Moreover, for these solutions the global energy inequality 
\begin{equation}
\int_{\R^3 \times \{t\}} |v|^2 dx + 2 \int_{0}^t \int_{\R^3} |\nabla v|^2 dxdt \le \int_{\R^3 \times \{0\}} |v|^2 dx
\end{equation}
holds for a.a. $t > 0$. Later, Hopf \cite{Hopf} obtained a similar existence result for the boundary value problem of \eqref{eq-NS} on bounded domains. As well-known, \eqref{eq-NS} enjoys a natural scaling invariance property, namely, for any solution $(v,p)$ to \eqref{eq-NS}, the rescaled functions
\begin{equation} \label{eq-scaling}
v^{(\lambda)}(x,t) = \lambda v(\lambda x, \lambda^2 t), \ p^{(\lambda)}(x,t) = \lambda^2 p(\lambda x, \lambda^2 t)
\end{equation}
also satisfy \eqref{eq-NS} for any $\lambda > 0$. The kinetic and dissipation energies of \eqref{eq-NS} are both supercritical with respect to the natural scaling of \eqref{eq-NS}, namely, the energies of the zoomed-in solution $v^{(\lambda)}$ with $\lambda < 1$ are greater than those of the original solution $v$. This indicates that the a priori estimates of the Leray-Hopf weak solutions are too weak to control the behavior of $v$ on fine scales. 

In the papers \cite{Scheffer1, Scheffer2}, Scheffer  initiated the study of partial regularity for the Navier-Stokes equations, that is, to estimate the dimension of the potential singular set of a weak solution. He considered a subclass of the Leray-Hopf weak solutions satisfying the local energy inequality, and proved that the singular set is of Hausdorff dimension no more than $\frac53$. In the landmark work of Caffarelli, Kohn and Nirenberg \cite{CKN}, Scheffer's result is improved, and it is shown that the 1d parabolic Hausdorff measure of the singular set is equal to $0$ for a class of suitable weak solutions. \cite{CKN} is a foundation for many important results later, \emph{e.g.}, the $L^\infty_t L^3_x$ regularity criterion \cite{ESS}, backward and forward self-similar solutions \cite{RusinSverak, JiaInvent}, regularity via Liouville theorems \cite{SereginActa}, \emph{etc}. Simplified proofs of the partial regularity theorem are given by Lin \cite{Lin}, Ladyzhenskaya-Seregin \cite{LadySer} and Vasseur \cite{Vasseur}. To describe the theory precisely, we recall a few important notions.

\begin{definition}
We say that a point $w = (x,t)$ is a \emph{regular} point of $v$, if $v$ is bounded (and H\"older continuous) in $Q(r,w)$ for some $r > 0$. Here $Q(r,w) = Q(r)+w$ and $Q(r) = B(r) \times (-r^2, 0)$. We say that $w$ is a \emph{singular} point of $v$ if it is not regular. We denote the singular set of $v$ by $\mathcal{S}[v]$. For a solution $v$ in the spacetime cylinder $Q(1)$, it is understood that $\mathcal{S}[v] \subset Q(1) \cup \(B(1) \times \{0\}\)$.
\end{definition}

\begin{definition} \label{def-2}
 We say that a pair of functions $(v,p)$ is a \emph{local suitable weak solution} to the Navier-Stokes equations in the domain $\mathcal{D} \subset \R^3 \times \R$, if the following conditions hold.
\begin{enumerate}
\item $v \in L_t^\infty L_x^2(\mathcal{D})$, $\nabla v \in L_t^2L_x^2(\mathcal{D})$, $p \in L_t^{\frac32}L_x^{\frac32}(\mathcal{D})$.
\item $v,p$ satisfy the Navier-Stokes equations in $\mathcal{D}$ in the sense of distributions.
\item $v,p$ satisfy the local energy inequality
\begin{align} \label{eq-local-energy}
\int_{\mathcal{D}_T} |v|^2 \phi dx \ + & \ 2 \int_{-\infty}^T \int_{\mathcal{D}_t} |\nabla v|^2 \phi dxdt \le  \nonumber\\
&\le\int_{-\infty}^T \int_{\mathcal{D}_t} \left[|v|^2(\phi_t + \Delta \phi) + (|v|^2 + 2 p)v \cdot \nabla \phi \right]dxdt.
\end{align}
for any nonnegative function $\phi \in C_0^\infty(\mathcal{D})$. Here, $\mathcal{D}_t$ is the time $t$ slice of $\mathcal{D}$, \emph{i.e.}, $\mathcal{D}_t = \mathcal{D} \cap (\R^3 \times \{t\})$.
\end{enumerate}
 (Note that this is the definition from \cite{Lin}, and the assumption on pressure is stronger than that in \cite{CKN}.)
\end{definition}

 Although it is not clear whether all Leray-Hopf weak solutions are suitable, in the appendix of \cite{CKN} the authors were able to construct suitable Leray-Hopf weak solutions for the Cauchy problem and the boundary value problem of \eqref{eq-NS} with general $L^2$ initial data. See also \cite[Lemma 2.3]{Lin} or \cite[Theorem 1.1]{LadySer} for the a priori estimates on the pressure. 

\begin{definition}
For any non-negative function $f(r)$ defined on some interval $[0,\delta_0)_r$ with $f(0) = 0$, we define the parabolic $f$-Hausdorff measure on $\R^3 \times \R$ as
\begin{equation}
\mathcal{P}^f (\mathcal{S}) = \lim_{\delta \to 0+}  \mathcal{P}^f_{\delta} (\mathcal{S})
\end{equation}
with
\begin{equation}
\mathcal{P}^f_{\delta} (\mathcal{S}) = \inf \left\{ \sum\limits_{i=1}^{+\infty} f(r_i) : \big\{Q(r_i,w_i)\big\}_{i=1}^{+\infty} \ \mbox{is a covering of} \ \S, \ 0 \le r_i \le \delta \right\}
\end{equation}
where $Q(r_i, w_i) = B(r_i) \times (-r_i^2,0) + w_i$. $\mathcal{P}^f$ is a special case of Caratheodory's construction, see \cite[Section 2.10]{Federer}. For $f(r) = r^\alpha$, this is the parabolic $\alpha$-dimensional Hausdorff measure introduced in \cite{CKN}, and we simply write $\mathcal{P}^f = \mathcal{P}^\alpha$.
\end{definition}

The classical partial regularity theorem of Caffarelli-Kohn-Nirenberg reads

\begin{theorem}[CKN partial regularity] \label{thm-partial-reg-old}
For a local suitable weak solutuion $(v,p)$ to the Navier-Stokes equations, $\mathcal{P}^1(\mathcal{S}[v]) = 0$. 
\end{theorem}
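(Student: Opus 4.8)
The plan is to derive the theorem from a single scale-invariant $\varepsilon$-regularity criterion together with a Vitali covering argument. I would first establish: there is an absolute constant $\varepsilon_0 > 0$ such that if $w = (x,t) \in Q(1)$ satisfies
$$\limsup_{r \to 0+} \frac{1}{r} \int_{Q(r,w)} |\nabla v|^2 \, dy \, ds < \varepsilon_0,$$
then $w$ is a regular point of $v$. Granting this, the singular set obeys $\mathcal{S}[v] \subseteq \Sigma := \big\{ w : \limsup_{r\to 0+} r^{-1}\int_{Q(r,w)}|\nabla v|^2 \ge \varepsilon_0 \big\}$, and moreover $\mathcal{S}[v]$ is relatively closed in $Q(1)\cup(B(1)\times\{0\})$ directly from the definition of a regular point. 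So the theorem reduces to showing $\mathcal{P}^1(\mathcal{S}[v]) = 0$ using only this containment.

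For the covering step, note first that since $|\nabla v|^2 \in L^1(Q(1))$, the Lebesgue differentiation theorem gives, for a.e.\ $w$, that $|Q(r)|^{-1}\int_{Q(r,w)}|\nabla v|^2 \to |\nabla v(w)|^2 < \infty$, whence $r^{-1}\int_{Q(r,w)}|\nabla v|^2 \to 0$; thus $\Sigma$, and a fortiori $\mathcal{S}[v]$, has four-dimensional Lebesgue measure zero. Now fix $\delta > 0$. Each $w \in \mathcal{S}[v] \subseteq \Sigma$ is the center of arbitrarily small cylinders $Q(r,w)$, $r < \delta$, with $r^{-1}\int_{Q(r,w)}|\nabla v|^2 \ge \varepsilon_0$; by a Vitali-type covering lemma for parabolic cylinders, extract a countable pairwise disjoint subfamily $\{Q(r_i,w_i)\}$ such that $\{Q(5r_i, w_i)\}$ covers $\mathcal{S}[v]$. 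Then
$$\mathcal{P}^1_{5\delta}(\mathcal{S}[v]) \le \sum_i 5 r_i \le \frac{5}{\varepsilon_0} \sum_i \int_{Q(r_i,w_i)} |\nabla v|^2 \le \frac{5}{\varepsilon_0} \int_{U_\delta} |\nabla v|^2 \, dy\, ds,$$
where $U_\delta$ is the parabolic $\delta$-neighborhood of $\mathcal{S}[v]$. Since $\mathcal{S}[v]$ is closed with measure zero, $|U_\delta| \to 0$ as $\delta \to 0+$, so absolute continuity of the integral forces the right-hand side to $0$; letting $\delta \to 0$ yields $\mathcal{P}^1(\mathcal{S}[v]) = 0$.

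The heart of the matter --- and the step I expect to be the main obstacle --- is the $\varepsilon$-regularity criterion. I would prove it in two stages using the dimensionless quantities
$$A(r) = \sup_{-r^2 \le s \le 0} \frac1r \int_{B(r)} |v|^2\,dx, \quad E(r) = \frac1r \int_{Q(r)} |\nabla v|^2, \quad C(r) = \frac1{r^2}\int_{Q(r)}|v|^3, \quad D(r) = \frac1{r^2}\int_{Q(r)}|p|^{3/2},$$
all invariant under the scaling \eqref{eq-scaling}. \emph{Stage one} is a ``weak'' criterion: there is $\varepsilon_1 > 0$ such that $C(r) + D(r) < \varepsilon_1$ at a single scale forces $v$ to be bounded near the center. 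I would obtain this either by a De Giorgi--Nash--Moser iteration on the truncated version of the local energy inequality \eqref{eq-local-energy} (Vasseur's approach), or by a blow-up/compactness contradiction (Lin's approach): assuming the contrary, rescale so that $C+D \to 0$, use \eqref{eq-local-energy} and the Calder\'on--Zygmund pressure estimate to extract uniform bounds and a weakly convergent subsequence with vanishing limit, deduce a geometric decay $C(\theta^k)+D(\theta^k) \lesssim \theta^{k\alpha}$ along scales $\theta^k$, and conclude H\"older continuity by a parabolic Morrey-type embedding, contradicting singularity.

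\emph{Stage two} upgrades smallness of $E$ to smallness of $C+D$. Interpolating $L^3$ between $L^2$ and $L^6$ with the Sobolev inequality and H\"older's inequality in time gives $C(\theta r) \lesssim \big(A(r) + E(r)\big)^{3/2}$ for a fixed small $\theta$, while \eqref{eq-local-energy} yields $A(r) + E(r) \lesssim C(2r) + C(2r)^{2/3} + C(2r)^{1/3}D(2r)^{2/3}$; for the pressure one writes $p = p_1 + p_2$ on a ball, with $p_1$ the Newtonian potential of $(v\otimes v)\chi_B$ (so $\|p_1\|_{L^{3/2}(B)} \lesssim \|v\|_{L^3(B)}^2$ by Calder\'on--Zygmund) and $p_2$ harmonic in the interior (so its scaled $L^{3/2}$ norm gains a positive power of $\theta$ under shrinking), giving $D(\theta r) \lesssim \theta^\beta D(r) + \theta^{-2} C(r)$ with $\beta > 0$. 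Iterating these inequalities, one checks that if $E(r) < \varepsilon_0$ for \emph{every} sufficiently small $r$ --- which is exactly what the $\limsup$ hypothesis provides, and is where $\limsup$ genuinely cannot be weakened to $\liminf$ --- then $C(\rho)+D(\rho)$ eventually falls below $\varepsilon_1$ at some small scale $\rho$, and stage one applies. The delicate points throughout are the handling of the pressure (only globally $L^{3/2}$, hence the local splitting and the need to track $D$ at slightly larger scales) and tuning the iteration so that the smallness is transported all the way down to a scale where the weak criterion bites.
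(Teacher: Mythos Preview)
Your proposal is correct and follows precisely the classical route the paper outlines (but does not itself carry out in detail): establish the one-point CKN criterion (the paper's Theorem~\ref{thm-ckn-criterion}) by first proving the $\varepsilon$-regularity Theorem~\ref{thm-ereg} and then iterating the scale-invariant quantities $A,E,F,D$, and finally combine this with a Vitali covering and the absolute continuity of $\int|\nabla v|^2\,dx\,dt$. The paper explicitly describes this strategy in the introduction, citing \cite{CKN,Lin,LadySer,Vasseur} for the details, so your plan is essentially identical to the paper's (sketched) argument.
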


The proof of Theorem \ref{thm-partial-reg-old} is based on the important $\e$-regularity criterion.  Using the a priori bounds in Definition \ref{def-2}  and interpolation, we know that $v \in L_t^{s}L_x^{l}(Q(1))$ for any $s ,l$  satisfying $\frac{2}{s}+\frac{3}{l} = \frac{3}{2}$, $l \in [2, 6]$.  We define the functionals
\begin{equation}
\G = \G[v,p] \stackrel{\Delta}= \int_{Q(1)} \left(|v|^3 + |p|^{3/2} \right) dxdt < +\infty
\end{equation}
and
\begin{equation}
\H = \H[v] \stackrel{\Delta}= \int_{Q(1)} |\nabla v|^2 dxdt < +\infty
\end{equation}
for a suitable weak solution $(v,p)$. By \eqref{eq-local-energy}, the kinetic and the dissipation energies of $v$ on $Q(r)$ for any $r<1$ are bounded in terms of $\G[v,p]$.

\begin{theorem}[local $\e$-regularity] \label{thm-ereg}
Let $v$ be a local suitable weak solution to the Navier-Stokes equations in $Q(1)$, there exists an absolute positive constant $\e_*$ such that if
\begin{equation}
\G[v,p] \le \e_{*},
\end{equation}
then $v$ is H\"older continuous in $\overline{Q\left(\frac12\right)}$, and satisfies the pointwise estimates
\begin{equation}
 \|\nabla_x^m v\|_{L^\infty(Q(1/2))} \lesssim_m 1, \quad m = 0,1,2,\cdots.
\end{equation}
\end{theorem}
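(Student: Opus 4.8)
The plan is to establish Theorem \ref{thm-ereg} by the now-standard iteration scheme for $\e$-regularity, working with scale-invariant energy quantities. First I would introduce the dimensionless functionals at scale $r$,
\[
A(r) = \sup_{-r^2 < t < 0} \frac{1}{r} \int_{B(r)} |v|^2\, dx, \quad E(r) = \frac{1}{r} \int_{Q(r)} |\nabla v|^2\, dxdt,
\]
\[
C(r) = \frac{1}{r^2} \int_{Q(r)} |v|^3\, dxdt, \quad D(r) = \frac{1}{r^2} \int_{Q(r)} |p|^{3/2}\, dxdt,
\]
each invariant under the scaling \eqref{eq-scaling}. The local energy inequality \eqref{eq-local-energy}, tested against a cutoff adapted to $Q(r)$, gives the basic a priori bound $A(r) + E(r) \lesssim C(r)^{2/3} + C(r)^{1/3} D(r)^{2/3} + \dots$, i.e. the kinetic and dissipation energies at scale $r$ are controlled by $\G$-type quantities at a larger scale; combined with Sobolev and interpolation this yields $C(\rho) \lesssim (\rho/r)^{?} (\text{stuff}) + (r/\rho)^{2}(A(r)+E(r))^{3/2}$ relating $C$ at a small scale $\rho$ to $A,E,C$ at scale $r$.

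The core of the argument is the decay estimate for the pressure. Since $p$ solves $-\Delta p = \partial_i \partial_j (v_i v_j)$ locally, I would split $p = p_{\mathrm{loc}} + p_{\mathrm{harm}}$ on a ball $B(r)$, where $p_{\mathrm{loc}}$ is the Newtonian potential of $\partial_i\partial_j(v_iv_j\chi_{B(r)})$ and $p_{\mathrm{harm}}$ is harmonic in $B(r)$. Calderón–Zygmund theory bounds $p_{\mathrm{loc}}$ in $L^{3/2}$ by $\|v\|_{L^3}^2$, while the harmonic part enjoys interior estimates giving $D(\rho) \lesssim (\rho/r)^{q} D(r) + (r/\rho)^2 C(r)$ for $\rho \le r/2$ and some $q>0$ (one typically gets $q = 1$ after optimizing, using mean-value subtraction). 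Feeding this back, one obtains a closed system: for $\theta \in (0,1/2)$ small and some universal exponents,
\[
\Phi(\theta r) \le C_0\, \theta^{?}\, \Phi(r) + C_0\, \theta^{-?}\, \Phi(r)^{3/2},
\]
where $\Phi = C + D$ (or a suitable combination also including $A+E$). The hard part will be bookkeeping the exponents so that the linear term genuinely contracts — i.e. first fixing $\theta$ small enough that $C_0 \theta^{?} \le \tfrac12$, and only then choosing $\e_*$ small enough that the superlinear term is dominated along the iteration; this decoupling of "choose $\theta$, then choose $\e_*$" is the standard but delicate point.

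Once the iteration is set up, the smallness hypothesis $\G[v,p] \le \e_*$ gives $\Phi(1) \lesssim \e_*$ small, and the recursive inequality propagates smallness to all scales: $\Phi(\theta^k) \le (1/2)^k \Phi(1) \to 0$, so in particular $C(\theta^k \cdot) \to 0$ geometrically at every point of $Q(1/2)$, uniformly. By (a translated and rescaled version of) the Caffarelli–Kohn–Nirenberg criterion $\limsup_{r\to 0} r^{-2}\int_{Q(r,w)} |v|^3 \, dxdt = 0 \Rightarrow w$ regular — which here follows directly from the same iteration applied at each $w \in Q(1/2)$ rather than being invoked as a black box — every point of $\overline{Q(1/2)}$ is regular, and $v$ is bounded there. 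For the higher-derivative bounds $\|\nabla_x^m v\|_{L^\infty(Q(1/2))} \lesssim_m 1$, I would upgrade: once $v \in L^\infty$ locally, bootstrap via the linear Stokes/heat estimates — treating $(v\cdot\nabla)v + \nabla p$ as a forcing term, using local parabolic regularity (Schauder / $L^p$ maximal regularity for the Stokes system) iteratively — to gain successively more spatial derivatives on slightly shrunken cylinders, then rescale so the final estimate lives on $Q(1/2)$ with constants depending only on $m$. This last bootstrap is routine and I would only sketch it, citing \cite{Lin} or \cite{LadySer} for the technical parabolic estimates.
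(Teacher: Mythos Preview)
The paper does not prove Theorem \ref{thm-ereg} at all: it is stated as a known result, with the sentence ``We refer to \cite{CKN, Lin, LadySer, Vasseur} for proofs of the H\"older continuity of $v$ in Theorem \ref{thm-ereg}, and to \cite[Proposition 2.1]{SverakActa} for a remark on how to obtain the higher regularity estimates.'' Your sketch follows precisely the Lin--Ladyzhenskaya--Seregin iteration-of-scale-invariant-quantities approach that the paper cites, so in that sense your proposal is consistent with the paper's treatment; it is a reasonable outline of the standard argument, though the unspecified exponents would of course need to be filled in for an actual proof.
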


We refer to \cite{CKN, Lin, LadySer, Vasseur} for proofs of the H\"older continuity of $v$ in Theorem \ref{thm-ereg}, and to \cite[Proposition 2.1]{SverakActa} for a remark on how to obtain the higher regularity estimates. Note that the methods in \cite{Lin, LadySer} rely on certain (non-quantitative) compactness arguments, while the method in \cite{Vasseur} based on the De Giorgi iteration is quantitative. With the help of Theorem \ref{thm-ereg} and  an iteration of scale-invariant quantities, the one-point regularity criterion Theorem \ref{thm-ckn-criterion} can be established. Then, Theorem \ref{thm-partial-reg-old} is  a corollary of Theorem \ref{thm-ckn-criterion} and the absolute continuity of the dissipation energy $\int |\nabla v|^2 dxdt$.

\begin{theorem}[CKN criterion] \label{thm-ckn-criterion}
Let $v$ be a local suitable weak solution to the Navier-Stokes equations in $Q(1)$. There exists a universal positive constant $\e_{**}$ such that
\begin{equation}
\limsup_{r \to 0+} \frac{1}{r} \int_{Q(r)} |\nabla v|^2 dxdt \le \e_{**} \implies  (0,0) \notin \mathcal{S}[v].
\end{equation}
\end{theorem}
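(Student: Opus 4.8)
The plan is to deduce the conclusion from the local $\e$-regularity criterion (Theorem \ref{thm-ereg}) after rescaling, by showing that the hypothesis on $\limsup_{r\to 0}\frac1r\int_{Q(r)}|\nabla v|^2$ forces all the scale-invariant quantities entering $\G$ to become small on some small cylinder $Q(\rho)$. Concretely, for $\rho\in(0,1)$ and $w=(0,0)$, introduce the dimensionless quantities
\begin{equation}
A(\rho)=\sup_{-\rho^2<t<0}\frac1\rho\int_{B(\rho)}|v|^2\,dx,\quad
E(\rho)=\frac1\rho\int_{Q(\rho)}|\nabla v|^2,\quad
C(\rho)=\frac1{\rho^2}\int_{Q(\rho)}|v|^3,\quad
D(\rho)=\frac1{\rho^2}\int_{Q(\rho)}|p|^{3/2},
\end{equation}
all of which are invariant under the Navier-Stokes scaling \eqref{eq-scaling}. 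Since $\G[v^{(\rho)},p^{(\rho)}]\lesssim C(\rho)+D(\rho)$ for the rescaled solution on $Q(1)$, it suffices to prove that $C(\rho)+D(\rho)<\e_*$ for some $\rho>0$; applying Theorem \ref{thm-ereg} to $(v^{(\rho)},p^{(\rho)})$ then yields H\"older continuity near $(0,0)$, i.e. $(0,0)\notin\mathcal S[v]$.

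The key steps, in order: (1) By the local energy inequality \eqref{eq-local-energy} with a suitable cutoff adapted to the concentric cylinders $Q(\rho/2)\subset Q(\rho)$, bound $A(\rho/2)+E(\rho/2)$ by $C(\rho)$, $D(\rho)$ and $E(\rho)$ via the standard energy estimate; by the smallness of $E$ this essentially controls $A+E$ at the smaller scale by $C^{2/3}$-type terms plus the pressure. (2) Interpolation (Gagliardo–Nirenberg / Sobolev in space, Hölder in time) gives $C(\rho)\lesssim \big(A(\rho)+E(\rho)\big)^{3/2}$ up to the usual refinements, so $C$ is controlled by the energy quantities. (3) The pressure: using that $p$ solves $-\Delta p=\partial_i\partial_j(v_iv_j)$, decompose $p=p_{\mathrm{loc}}+p_{\mathrm{harm}}$ on $Q(\rho)$; the Calderón–Zygmund part is controlled by $\int_{Q(2\rho)}|v|^3$, and the harmonic part by interior estimates gives the standard iteration inequality
\begin{equation}
D(\theta\rho)\lesssim \theta\, D(\rho)+\theta^{-2}C(\rho)
\end{equation}
for $\theta\in(0,1/2)$. (4) Combining (1)–(3) produces a closed system of inequalities for $(A+E)(\theta\rho)$ in terms of $(A+E)(\rho)$, $D(\rho)$ that, together with the hypothesis $\limsup_{r\to 0}E(r)\le\e_{**}$, can be iterated over the geometric sequence of radii $\rho_k=\theta^k\rho_0$: choosing $\e_{**}$ and $\theta$ small and absorbing, one shows that $A(\rho_k)+E(\rho_k)+C(\rho_k)+D(\rho_k)$ stays bounded and in fact eventually drops below $\e_*$. (5) Apply Theorem \ref{thm-ereg} after rescaling.

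The main obstacle is step (4): one must arrange the constants so that the coupled recursion genuinely decays (or at least lands below the $\e$-regularity threshold) rather than merely staying bounded. The pressure coupling is the delicate point — $D$ does not have its own small smallness hypothesis, only $E$ does — so one must exploit the gain factor $\theta$ in the pressure iteration together with the smallness of $E$ (and hence, by step (2), of $C$ once the energy quantities are small) to close the loop. This is exactly the scale-invariant iteration scheme of \cite{CKN}; the hypothesis on $\limsup\frac1r\int_{Q(r)}|\nabla v|^2$ is precisely what feeds the smallness into the recursion at every small scale, so no compactness is needed. A minor technical point is the choice of cutoff functions in \eqref{eq-local-energy} (they must be genuinely admissible test functions, handled by the usual mollification/limiting argument) and keeping track that the limiting cylinders reach down to $(0,0)$ on the parabolic boundary $B(1)\times\{0\}$.
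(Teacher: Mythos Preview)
The paper does not give its own detailed proof of this classical result; it is stated as background with references to \cite{CKN, Lin, LadySer}, and the text merely remarks that ``with the help of Theorem \ref{thm-ereg} and an iteration of scale-invariant quantities, the one-point regularity criterion Theorem \ref{thm-ckn-criterion} can be established.'' Your proposal is exactly this standard iteration scheme and is correct; it coincides with the framework the paper sets up in Lemmas \ref{lem-multiplicative} and \ref{lem-energy-iter} (your steps (1)--(3)) and carries out in Lemma \ref{lem-criterion} (your step (4), there with $\E$ in place of $E$), so there is nothing to add.
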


We point out that the proof of Theorem \ref{thm-partial-reg-old} in \cite{CKN} is still non-quantitative, essentially because the absolute continuity of the dissipation energy is a non-quantitative fact. Given any integrable function $g$ on $Q(1)$, it is well-known that for any $\e > 0$, there exists $\delta > 0$ such that $\int_U |g| dxdt < \e$ for any $U$ with $|U| \stackrel{\Delta}= \int_U 1 dxdt < \delta$. However, it is clearly impossible to give an explicit dependence of $\delta$ on $\epsilon$ unless higher integrability of $g$ is assumed. 

Remarkably, Choe and Lewis \cite{ChoeLewis} improved Theorem \ref{thm-partial-reg-old} for the first time and showed that $\mathcal{P}^{f} (\mathcal{S}[v]) = 0$ for $f(r) = r|\ln r|^\alpha$ for a small positive $\alpha$ (precisely $0 \le \alpha<\frac{3}{44}$). The key step is to show that $\ln (1/d) |\nabla v|^2$ is locally integrable where $d$ is the parabolic distance to certain subsets of the singular set. See also the work \cite{RWW} for an improvement on the range of $\alpha$. 

For any singular point $w = (x,t) \in S[v]$, we say that $w$ is Type I if
\begin{equation} \label{eq-type1-condition}
\limsup_{r \to 0+}\frac{1}{r} \int_{Q(r,w)} |\nabla v|^2 dxdt < + \infty.
\end{equation}
For $w = (0,0)$, the pointwise upper bound
\begin{equation} \label{eq-type1-rate}
|v| \le \frac{C_*}{\sqrt{|x|^2-t}}, \ \mbox{for} \ (x,t) \in Q(r), \ r>0
\end{equation}
would imply \eqref{eq-type1-condition}. It is a major open problem in the field to rule out the possibility of blow-up under the condition \eqref{eq-type1-condition} or \eqref{eq-type1-rate}. In other words, it is of great importance to improve the CKN criterion Theorem \ref{thm-ckn-criterion}, in particular, to \emph{remove the smallness requirement} there.


\subsection{Main results}
\subsubsection{Quantitative partial regularity}

In this paper, we first give a further improvement on the partial regularity theorem using a different approach from \cite{ChoeLewis}. 

\begin{mainthm}[Improved partial regularity] \label{thm-partial-reg}
Let $v$ be a local suitable weak solution to the Navier-Stokes equation in $Q(1)$, then we have, for $f(r) = r |\ln r|$,
\begin{equation} \label{eq-PfSv-0}
\mathcal{P}^{f}(\mathcal{S}[v]) = 0.
\end{equation}

\end{mainthm}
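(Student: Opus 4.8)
The plan is to combine the CKN criterion (Theorem \ref{thm-ckn-criterion}) with a quantitative substitute for the absolute continuity of the dissipation energy $\H[v]$, obtained via the pigeonhole principle. Recall that in the classical argument, one shows that the set $\Sigma$ of points where $\limsup_{r\to 0+}\frac1r\int_{Q(r,w)}|\nabla v|^2 > \e_{**}$ has $\mathcal{P}^1(\Sigma)=0$; since $\mathcal{S}[v]\subset\Sigma$ by the contrapositive of Theorem \ref{thm-ckn-criterion}, this yields $\mathcal{P}^1(\mathcal{S}[v])=0$. The gain of a logarithmic factor must come from exploiting that the ``bad'' set is not just $1$-null but is covered economically when one keeps track of how $\int_{Q(r)}|\nabla v|^2$ decays, rather than merely that it does.

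Concretely, I would argue as follows. Fix a small $\delta>0$ and cover (a compact piece of) $\mathcal{S}[v]$ by parabolic cylinders. For each singular point $w$, Theorem \ref{thm-ckn-criterion} guarantees a smallest (or a convenient) radius $r(w)\le\delta$ with $\frac{1}{r(w)}\int_{Q(r(w),w)}|\nabla v|^2\,dxdt > \e_{**}$, i.e. $\int_{Q(r(w),w)}|\nabla v|^2\,dxdt > \e_{**}\,r(w)$. Apply the Vitali covering lemma (in the parabolic metric) to extract a disjoint subfamily $\{Q(r_i,w_i)\}$ such that $\{Q(5r_i,w_i)\}$ still covers $\mathcal{S}[v]$. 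Then the quantity we want to bound is $\sum_i f(5r_i) = \sum_i 5r_i|\ln(5r_i)|$. Now group the cylinders dyadically by size: let $I_k = \{i : 2^{-k-1} < r_i \le 2^{-k}\}$, so for $i\in I_k$ we have $|\ln r_i|\approx k\ln 2$ and $\e_{**}2^{-k-1} < \int_{Q(r_i,w_i)}|\nabla v|^2 \le \H_k$, where $\H_k$ is the total dissipation energy in a fixed slightly-enlarged cylinder. Disjointness gives $\#I_k \cdot \e_{**}2^{-k-1} \le \sum_{i\in I_k}\int_{Q(r_i,w_i)}|\nabla v|^2 \le g_k$, where $g_k \stackrel{\Delta}{=} \int_{E_k}|\nabla v|^2\,dxdt$ and $E_k$ is the union of those cylinders; crucially $\sum_k g_k \lesssim \H[v]$ since each point is counted a bounded number of times across nearby scales (the cylinders within a single $I_k$ are disjoint, and one pays only an absolute constant for overlap between consecutive $k$'s after a harmless enlargement). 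Hence $\sum_{i\in I_k} f(5r_i) \lesssim \#I_k\cdot 2^{-k}\cdot k \lesssim \frac{k}{\e_{**}}\, g_k$, and summing, $\sum_i f(5r_i)\lesssim \frac{1}{\e_{**}}\sum_k k\,g_k$.

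The point of the logarithmic improvement, and the step I expect to be the main obstacle, is that $\sum_k k\,g_k$ need not be finite from $\sum_k g_k \lesssim \H[v]$ alone — an extra pigeonhole/localization argument is needed. This is exactly where the quantitative absolute-continuity replaces the qualitative one: one must show that the mass $g_k$ cannot be spread across too many scales $k$, or rather, that after refining the choice of $r(w)$ one can arrange that the scales contributing nonnegligible $g_k$ form a sparse set. The cleanest way I see is an iterated pigeonhole: over dyadic scale-windows $[2^j, 2^{2j}]$ (i.e. blocks of $\sim j$ consecutive values of $k$), the sub-sum $\sum_{k\in\text{block }j} g_k \le \H[v]$, so there is at least one scale $k_j$ in each block with $g_{k_j} \le \H[v]/j$; discarding (by a further subdivision/limiting argument, letting $\delta\to0$ so only small scales matter) the scales where $g_k$ is large, one is left with $\sum_j k_j g_{k_j} \lesssim \sum_j 2^{2j}\cdot \H[v]/j$, which is still divergent, so the naive version fails — meaning the real argument must instead stop the radius selection at the first scale where $g_k$ drops below a threshold tied to $k$, and show the residual covering has vanishing $f$-measure as $\delta\to0$. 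Making this stopping-time construction consistent with the Vitali extraction is the delicate technical heart; everything else (the $\e$-regularity input, the covering lemma, the dyadic bookkeeping) is standard. I would therefore structure the proof as: (i) state the scale-decomposition lemma, (ii) prove the pigeonhole stopping-scale lemma giving $\mathcal{P}^f$-smallness of the part of $\mathcal{S}[v]$ captured at ``good'' scales, (iii) show the complementary part is empty or negligible using Theorem \ref{thm-ckn-criterion} again, and (iv) let $\delta\to0$ to conclude $\mathcal{P}^f(\mathcal{S}[v])=0$.
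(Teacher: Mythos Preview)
Your high-level intuition---the logarithmic gain comes from a pigeonhole on the dissipation energy---is correct, but the proposal is missing the two ideas that make the argument work, and the ``stopping-time'' fix you sketch does not supply them. First, working only with $E(r,w)=\frac1r\int_{Q(r,w)}|\nabla v|^2$ and the raw CKN criterion cannot yield the improvement. The paper replaces $E$ by the weaker quantity $\E(r)=r^{-14/9}\bigl(\int_{Q(r)}|\nabla v|^{9/5}\bigr)^{10/9}$ and proves a refined criterion (Lemma~\ref{lem-criterion}): smallness of $\E(r)$ for all $r$ in a range $(\sigma(\G),1)$ implies regularity in $\overline{Q(\sigma)}$. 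The point of the exponent $\frac95<2$ is that H\"older lets one split $\int_{Q_k(w)}|\nabla v|^{9/5}$ into a piece inside the next-scale set $U_{k+1}$ and a piece outside it, and the inside piece picks up an extra factor $|U_{k+1}\cap Q_k(w)|^{1/9}$; this volume factor has no analogue if one works with $|\nabla v|^2$ directly, and it is exactly what your dyadic bookkeeping with $g_k$ cannot produce.

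Second, the hollowing must be done differently. The paper takes a \emph{super-exponentially} decreasing scale sequence $R_k=k^{-\gamma k}$ (so $|\ln R_k|\sim k\ln k$ and $R_{k+1}/R_k\le k^{-\gamma}$), defines $U_k=\bigcup_{w\in\T_k}Q(r_k(w),w)$ where each $r_k(w)\in[\sigma R_k,R_k]$ violates the $\E$-criterion, and sets $\EE_k=\int_{U_k\setminus U_{k+1}}|\nabla v|^2$. These integrals are over disjoint sets, so $\sum_k\EE_k\le\H$ and $\EE_{k_j}=o(1/(k_j\ln k_j))$ along a subsequence. The super-exponential scale ratio makes the volume factor above negligible, while the pigeonhole controls the ``outside $U_{k+1}$'' piece; together they give $\sum_{w\in\T_{k_j}} r_{k_j}(w)|\ln r_{k_j}(w)|\to 0$. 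Your quantities $g_k$ are not hollowed in this way, and your stopping-time proposal never interacts with a volume estimate. There is also a preliminary decomposition you omit: the paper first splits off $\mathcal{S}_2=\{w:\limsup_{r\to0} E^+(r,w)/|\ln r|>1\}$, handled by the standard covering argument, and restricts the refined argument to the complementary set $\mathcal{S}_1$, where the a priori bound $E(r,w)\lesssim|\ln r|$ (hence $(A+F+D)(r,w)\lesssim|\ln r|^2$ by Lemma~\ref{lem-log-grow}) is available and is used to control $|U_{k+1}\cap Q_k(w)|$.
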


\begin{remark}
This result remains true for the forced Navier-Stokes equations with external force $g$ belonging to $L^q_{x,t}$, $q>\frac52$.  Such an external force term can be treated by introducing $r^{3q-5}\int_{Q(r)} |g|^q dxdt$ in the iteration of scale-invariant quantities and applying the forced $\e$-regularity criterion, see \cite[Corollary 1]{CKN}. Moreover, in \eqref{eq-PfSv-0} we can even take $f(r) = r |\ln r| \cdot \ln \ln |\ln r|$. Since those bring no additional difficulty in our analysis,  we shall focus on the unforced equation and the particular choice of $f$ in this article.
\end{remark}

The key idea is to find a sharp quantitative counterpart for the absolute continuity property of the dissipation energy $\int |\nabla v|^2 dxdt$. For any integrable function $g$ defined on an open set $\Omega \subset \R^n$ and any sequence of open sets $U_k \subset \Omega$,  consider the quantities
\begin{equation}
\EE_k = \int_{U_k \setminus U_{k+1}} |g| dx.
\end{equation}
Clearly, $\EE_k$ satisfies the non-overlapping property
\begin{equation}
\sum_{k \ge 1} \EE_k \le \int_{\Omega} |g| dx,
\end{equation}
which enables the application of the pigeonhole principle. In particular, we have a sequence of $k_j \to +\infty$ such that $\EE_{k_j} \le o(\frac{1}{k_j \ln k_j})$. We will take $g = |\nabla v|^2$, and this observation provides a crucial extra smallness leading to the improvement of partial regularity. 

Another ingredient in the proof of Theorem \ref{thm-partial-reg} is to obtain an iteration of the scale-invariant quantities $A(r)$, $E(r)$, $F(r)$ and $D(r)$ (see Section \ref{sec-notations}) based on the energy structure of the Navier-Stokes equations, assuming the smallness of $\E(r) = r^{-\frac{14}{9}} \left( \int_{Q(r)} |\nabla v|^{\frac95} dx dt \right)^\frac{10}{9}$ instead of $E(r) = \frac{1}{r} \int_{Q(r)} |\nabla v|^2 dxdt$ (see Lemmas \ref{lem-multiplicative}, \ref{lem-energy-iter} and \ref{lem-criterion} for details). Note that the quantity $\E$ was already used by Choe and Lewis in \cite{ChoeLewis}. The main novelty here is the iteration argument.

Next, we turn to our second main result concerning quantitative intervals of regularity. It is well-known that one \emph{cannot} expect regularity estimates only in terms of the local kinetic and dissipation  energies due to the supercritical nature of the Navier-Stokes equations.  In the next result, based on the quantitative method described above, we show that such regularity estimates are nevertheless valid in certain \emph{intervals of regularity} for any local suitable weak solution. Let $v$ be a local suitable weak solution in $Q(1)$ and $\pi_3$ be the projection map to the $x_3$-axis. According to Theorem \ref{thm-partial-reg-old}, the 1D Hausdorff measure of $\pi_3(\mathcal{S}[v]) = \{-1<z<1: \R^2 \times\{z\} \times \R \cap \mathcal{S}[v] \neq \emptyset\}$ must be zero.  Note that $\pi_3(\mathcal{S}[v] \cap \overline{Q(\rho)})$ is a closed set for any $\rho<1$.  Hence, for any open interval $(a,b) \subset (-1,1)$, there exists a subinterval $(z_1, z_2) \subset (a,b)$ such that $v$ is regular in the strip $\{(x,t) \in Q(\rho): z_1<x_3<z_2\}$.  However, Theorem \ref{thm-partial-reg-old} does not imply any explicit lower bound on the length of $(z_1, z_2)$ or any regularity estimates in such a strip. Theorem  \ref{thm-interval} provides a quantitative description of such intervals only in terms of the natural local energies, which will be important for applications in the axially symmetric case (Theorems \ref{thm-small-Gamma} and \ref{thm-improved-ckn}). 

\begin{mainthm}[quantitative interval of regularity] \label{thm-interval}
Let $v$ be a  local suitable weak solution to the Navier-Stokes equations in $Q(1)$. For any $0<\rho<1$ and any $-1<a<b<1$, there exists an interval $(z_1, z_2) \subset (a, b)$, with
$$|z_2-z_1| \ge h_1(\G, \H) = (\G+2)^{-M_1(\min\{\H,\G\} + 1) },$$ 
for a constant $M_1(\rho,a,b)>0$, and $v$ is regular in the closure of the domain $\mathcal{I} = \{(x,t) \in Q(\rho): z_1<x_3<z_2 \}$ and satisfies the estimates
\begin{equation} \label{eq-00114}
\|\nabla_x^m v\|_{L^\infty(\overline{\mathcal{I}})} \lesssim_m h_1^{-(m+1)}, \quad \forall \  m = 0, 1, 2, \cdots.
\end{equation}
\end{mainthm}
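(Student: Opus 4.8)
The plan is to couple the quantitative $\e$-regularity machinery underlying Section~\ref{sec-notations} with a pigeonhole argument carried out in the $x_3$-direction. First I would record the following consequence of Lemmas~\ref{lem-multiplicative}, \ref{lem-energy-iter} and \ref{lem-criterion}: there is an absolute constant $\e_0>0$ so that, whenever $(v,p)$ is a suitable weak solution on $Q(r_0,w_0)\subset Q(1)$ with $\E(r_0,w_0)\le \e_0$, then — using that the companion quantities are controlled for free by the local energy inequality, $A(r_0,w_0)+F(r_0,w_0)+D(r_0,w_0)\lesssim (\G+2)\,r_0^{-2}$ — the scale-invariant quantities $A,\E,F,D$ decay geometrically on all scales below $r_0$, so $w_0$ is regular and
$$\|\nabla_x^m v\|_{L^\infty(Q(r_0/2,\,w_0))}\lesssim_m r_0^{-(m+1)}\big((\G+2)r_0^{-2}+1\big)^{C(m+1)},\qquad m=0,1,2,\dots,$$
for an absolute $C$. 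The essential feature is that $\E$ is built from the \emph{subcritical} $L^{9/5}$-norm of $\nabla v$ (this is why Choe--Lewis' quantity enters): if an open $x_3$-slab $S$ of width $h$ satisfies $\int_{S}|\nabla v|^{9/5}\,dxdt\le\eta$, then for every $w$ whose $x_3$-coordinate lies in the middle half of $S$ one has $Q(h/4,w)\subset S$ and hence $\E(h/4,w)\le (h/4)^{-14/9}\eta^{10/9}$, which is $\le\e_0$ as soon as $\eta\le c\,\e_0^{9/10}h^{7/5}$. The exponent $7/5>1$ is the surplus that makes the rest possible.

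Next comes the base case: $\min\{\H,\G\}$ below a fixed absolute threshold. Fix $\rho'\in(\rho,1)$ and note $\widetilde\H:=\int_{Q(\rho')}|\nabla v|^{9/5}\,dxdt\lesssim \H^{9/10}$ is then $O(1)$ by H\"older. Partitioning $(a,b)$ into $\sim (b-a)/h$ disjoint $x_3$-slabs of width $h$ and using the non-overlapping bound $\sum_{\text{slabs}}\int_{\text{slab}\cap Q(\rho')}|\nabla v|^{9/5}\le\widetilde\H$, the pigeonhole yields a slab with $\int\le \widetilde\H\,h/(b-a)$; this is $\le c\,\e_0^{9/10}h^{7/5}$ precisely when $h\gtrsim (\widetilde\H/(b-a))^{5/2}$, and this choice is admissible ($h<b-a$) exactly because $\widetilde\H$ is small. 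Shaving a quarter of the width off each end and applying the criterion of the first paragraph at scale $\sim h$ at every remaining point produces $\mathcal I$ and \eqref{eq-00114} with $h_1$ comparable to a constant depending only on $\rho,a,b$.

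For general (large) energies a single pigeonhole step is insufficient — chopping into finer slabs lowers $\int|\nabla v|^{9/5}$ only linearly while the admissibility threshold $c\,\e_0^{9/10}h^{7/5}$ decreases superlinearly — so I would argue by induction on $\lceil\min\{\H,\G\}\rceil$. If $\min\{\H,\G\}$ lies below the base-case threshold we are done by the previous paragraph; otherwise partition the current strip into $\sim (\G+2)^{M_1}$ disjoint sub-strips (and, if necessary, simultaneously shrink the $(x_1,x_2,t)$-extent) and use the non-overlapping bound for the localized natural energies to pass to a sub-strip on which the relevant energy budget has dropped by at least one unit; then recurse. Since the budget starts at $\le \min\{\H,\G\}+O(1)$ and decreases by $\ge 1$ per round while never becoming negative, there are at most $\min\{\H,\G\}+O(1)$ rounds; the $x_3$-width contracts by a factor $\ge (\G+2)^{-M_1}$ each round, so composing the losses yields $|z_2-z_1|\ge h_1=(\G+2)^{-M_1(\min\{\H,\G\}+1)}$. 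The count $\sim (\G+2)^{M_1}$ of sub-strips per round — hence the base $(\G+2)^{M_1}$ — is forced both by the pigeonhole and by the requirement that the reduced problem remain within the reach of the criterion, i.e.\ by the supercritical growth of $A,F,D$ at small scales. I expect the genuinely delicate point to be the organization of this induction so that the quantity decreased at each round is simultaneously subadditive over the disjoint sub-strips (so the pigeonhole applies) and strong enough that its dropping below the threshold genuinely unlocks Lemmas~\ref{lem-multiplicative}--\ref{lem-criterion}; equivalently, one must reconcile the pigeonhole's bias toward wide slabs and few rounds with the fact that the scale-invariant iteration degrades on small scales because the equation is energy-supercritical, while tracking the polynomial-in-$(\G+2)$ losses of the first paragraph so that they compound to exactly $h_1$. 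Once the terminal strip and the scale $\sim h_1$ are in place, \eqref{eq-00114} follows from a direct application of the quantitative criterion, the dependence $h_1^{-(m+1)}$ reflecting that regularity turns on at a scale comparable to $h_1$ (with $M_1$ chosen large enough to absorb the residual $(\G+2)$-powers).
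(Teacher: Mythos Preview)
The proposal has a genuine gap in its opening criterion. You claim that $\E(r_0,w_0)\le\e_0$ at a \emph{single} scale, together with the trivial bound $A+F+D\lesssim(\G+2)r_0^{-2}$, forces geometric decay of $A,\E,F,D$ on all scales below $r_0$. This is not what Lemmas~\ref{lem-multiplicative}--\ref{lem-criterion} give: the iteration \eqref{eq-for-iter} invokes smallness of $\E$ at the \emph{current} scale at every step, which is why Lemma~\ref{lem-criterion} assumes $\E(r)\le\e_1$ for all $r\in(\sigma,1)$. From a single scale there is no bootstrap: the only monotonicity is $\E(\theta r_0)\le\theta^{-14/9}\E(r_0)$, which goes the wrong way, and the iteration does not make $E$ (hence $\E$) small until $A+D^{4/3}$ has already been driven down---which itself requires $\E$ small along the way. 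Concretely, your slab bound $\int_S|\nabla v|^{9/5}\le\eta$ yields only $\E(r,w)\le r^{-14/9}\eta^{10/9}$, small for $r\sim h$ but divergent as $r\to0$; it does not supply the range-of-scales hypothesis. This breaks the base case when $\H$ is small but $\G$ is large: rescaling $Q(h/4,w)$ to unit size makes the local $\G$ of order $(\G+2)h^{-2}$, so the criterion needs $\E$ small down to scale $\sim h\cdot((\G+2)h^{-2})^{-\beta}$, and no choice of $h<b-a$ makes the slab bound reach that far. The induction inherits the same defect, since localizing $\H$ to a thin strip and then rescaling does not decrease it (supercriticality), and the recursion must eventually land in this bad base case.

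The paper's argument is structurally different and does not recurse on sub-strips. It fixes a geometric sequence of scales $R_k=(\G+2)^{-\gamma k}$ and argues by contradiction: if no $z$ admits regularity, then for every $z$ and every $k\le\lfloor 100\e_1^{-1}\H_1\rfloor+1$ some two-tiered criterion (Lemma~\ref{lem-criterion-2}: $E$ merely bounded on $(R_k,\tfrac1{10})$, $\E$ small on $(\sigma R_k,R_k)$) fails at a point $w_k(z)$. A Vitali-type covering in $z$ then splits $(-\tfrac13,\tfrac13)$ into a part where $E^+$ is large at some scale (handled in one line) and a part where $\E>\e_1$ at some $r\in(\sigma R_k,R_k)$. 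For the latter the paper forms the nested unions $U_k$ of the offending cylinders and the \emph{hollowed} energies $\EE_k=\int_{U_k\setminus U_{k+1}}|\nabla v|^2$; the pigeonhole is over the scale index $k$, using $\sum_k\EE_k\le\H_1$, and produces a level $K$ with $\EE_K\le\e_1/100$. The decisive estimate is the H\"older splitting of $\bigl(\int_{Q_K(z)}|\nabla v|^{9/5}\bigr)^{10/9}$ into a piece on $Q_K(z)\cap U_{K+1}$ (tiny volume, since consecutive scales are separated by $(\G+2)^{-\gamma}$) and a piece on $Q_K(z)\setminus U_{K+1}$ (bounded by $\EE_K$). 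This is what converts the subcriticality of $\E$ into usable smallness over the whole range $(\sigma R_K,R_K)$, and summing over the disjoint cylinders contradicts the covering. Your plan captures the role of $\E$ correctly but misses the hollowed-set/scale-pigeonhole mechanism that replaces the unavailable single-scale criterion.
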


\begin{remark}
With simple modifications in the geometries of the proof, we can also obtain the existence of quantitative annulus of regularity. Precisely, for any  $0<\rho<1$ and $0<a<b<1$, there exists an interval $(r_1,r_2) \subset (a,b)$ with
$$|r_2 - r_1| \ge \widetilde{h_1}(\G, \H) =  (\G+2)^{-\widetilde{M_1}(\min\{\H,\G\} + 1) }$$
where $\widetilde{M_1} = \widetilde{M_1}(\rho, a,b) > 0$, such that $v$ is regular  in the closure of the domain $\widetilde{\mathcal{I}} = \{(x,t): r_1<|x|<r_2, -\rho^2<t<0 \}$, with estimates similar to \eqref{eq-00114}.
\end{remark}

For comparison and also for applications in the axially symmetric case, we present a similar result concerning the existence of regular epochs (intervals in the time direction), which is less surprising and much easier to prove.
Theorem \ref{thm-partial-reg-old} implies that the $\frac12$D Hausdorff measure of the set $\pi_0(\mathcal{S}[v]) = \{-1<t\le 0: \(\R^3 \times \{t\}\) \cap \mathcal{S}[v] \neq \emptyset\}$ is zero, and it turns out one can obtain Proposition \ref{prop-epoch}  without using the pigeonhole principle for the hollowed dissipation energies $\EE_k$.

\begin{proposition}[quantitative epoch of regularity] \label{prop-epoch}
Let $v$ be a  local suitable weak solution to the Navier-Stokes equation in $Q(1)$. For any $0<r<1$ and $-1<a,b<0$,  there exists an interval $(t_1, t_2) \subset (a,b)$ with
$$|t_2-t_1| \ge h_2(\G) = M_2(\G+1)^{-M_3},$$ 
for a constant $M_2(r,a,b)>0$  and an absolute constant $M_3>0$, and $v$ is regular in the closure of the domain $\mathcal{J} = \{(x,t) \in Q(r): t_1 < t < t_2\}$ and satisfies the estimates
\begin{equation}
\|\nabla_x^m v\|_{L^\infty({\overline{\mathcal{J}}})} \lesssim_m h_2^{-(m+1)}, \quad \forall \  m = 0, 1, 2, \cdots.
\end{equation}
\end{proposition}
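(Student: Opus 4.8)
The plan is to bypass any fine analysis of the dissipation energy and instead combine the a priori control of the natural local energies by $\G$ with a \emph{quantitative local-in-time $H^1$ regularization estimate}, in the spirit of Leray's epochs of regularity, and then feed the resulting subcritical bound into the $\e$-regularity criterion, Theorem~\ref{thm-ereg}. Fix radii $r<\rho_1<\rho_2<1$. From the local energy inequality \eqref{eq-local-energy} (and the trivial bound $\int_{Q(\rho_2)}|p|^{3/2}\le\G$) one has a fixed polynomial $P$, of absolute degree and with coefficients depending on $\rho_2$, such that $\esssup_{t}\int_{B(\rho_2)}|v|^2\,dx$, $\int_{Q(\rho_2)}|\nabla v|^2\,dxdt$ and $\int_{Q(\rho_2)}|p|^{3/2}\,dxdt$ are all bounded by $P(\G)$. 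Applying Chebyshev's inequality on the first quarter of $(a,b)$, together with the fact that the set of regular times of $v$ has full Lebesgue measure in $(a,b)$ — a consequence of Theorem~\ref{thm-partial-reg-old}, exactly as indicated just above — one selects a time $t_*$ in the first quarter of $(a,b)$ that is a regular time of $v$ and satisfies $\int_{B(\rho_2)}\big(|v(\cdot,t_*)|^2+|\nabla v(\cdot,t_*)|^2\big)dx\le C(a,b)\,P(\G)$. Since $t_*$ is a regular time, $v$ is smooth in a (non-quantitative) space-time neighborhood of $\overline{B(\rho_1)}\times\{t_*\}$, which is all that is needed to \emph{start} the energy estimate rigorously at $t=t_*$ (alternatively one may appeal to local well-posedness in $H^1$ plus local weak--strong uniqueness).

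Next I would carry out the standard localized $H^1$ (Ladyzhenskaya--Prodi--Serrin type) energy estimate: with a cutoff $\chi\in C_0^\infty(B(\rho_2))$, $\chi\equiv1$ on $B(\rho_1)$, setting $y(t):=\int\chi^2|\nabla v(\cdot,t)|^2\,dx+1$, one obtains — as long as $v$ is smooth — a differential inequality $y'(t)\le C\,y(t)^3+f(t)$, the nonlinearity being handled by Sobolev embedding and Young's inequality and the pressure either by working with the vorticity equation or by the usual local splitting $p=p_1+p_2$ ($p_1$ a Riesz-transform image of $v\otimes v$, $p_2$ spatially harmonic). Here $f\ge0$ collects the localization errors, supported in $\{\nabla\chi\neq0\}$, hence controlled by the local energies above, with $\int_{t_*}^{0}f\,dt\le C(a,b)\,P(\G)$. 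A continuation argument based at the smooth slice $t=t_*$ then shows that $v$ stays smooth on $B(\rho_1)\times(t_*,t_*+\tau)$ with $\sup_{(t_*,t_*+\tau)}y\le C(a,b)\,P(\G)$, where, by comparison with $\bar y'=C\bar y^3$, one may take
\[
\tau\ \ge\ M_2\,(\G+1)^{-M_3}\ =:\ h_2(\G),\qquad M_2=M_2(r,a,b)>0,\ \ M_3>0\ \text{absolute}.
\]
In particular $v\in L^\infty\big(t_*,t_*+\tau;\,L^6(B(\rho_1))\big)$ with norm $\lesssim_{a,b}P(\G)^{1/2}$.

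Since $L^\infty_tL^6_x$ is subcritical, this upgrades to uniform pointwise regularity on a slightly smaller set. For $x_0\in B(r)$, $t_0\in(t_*+\tfrac{\tau}{4},\,t_*+\tfrac{3\tau}{4})$, and $\rho:=\rho_*$ a length of size $\sim_{r,a,b}(\G+1)^{-M_3'}$ with $M_3'<M_3$, chosen small enough that $\rho_*\le\min\{\rho_1-r,\ \sqrt{\tau}/2\}$, a short application of H\"older's inequality — again using the local splitting of $p$ and the bounds $\|v\|_{L^\infty_tL^6_x}$, $\|p\|_{L^{3/2}(Q(\rho_2))}\lesssim P(\G)$ — gives
\[
\frac{1}{\rho_*^{2}}\int_{Q(\rho_*,(x_0,t_0))}\big(|v|^3+|p|^{3/2}\big)\,dxdt\ \lesssim_{a,b}\ P(\G)^{3/2}\,\rho_*\ \le\ \e_*.
\]
Hence, by the rescaled form of Theorem~\ref{thm-ereg}, every such $(x_0,t_0)$ is a regular point, with $\|\nabla_x^m v\|_{L^\infty(Q(\rho_*/2,(x_0,t_0)))}\lesssim_m\rho_*^{-(m+1)}$. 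Taking $(t_1,t_2):=(t_*+\tfrac{\tau}{4},\,t_*+\tfrac{3\tau}{4})$, intersected with $(-r^2,0)\cap(a,b)$ and harmlessly truncated when $\G$ is small so as to remain inside $(a,b)$, and $\mathcal{J}=\{(x,t)\in Q(r):t_1<t<t_2\}$, one gets $|t_2-t_1|\gtrsim\tau\gtrsim h_2(\G)$, and covering $\overline{\mathcal J}$ by the cylinders above — with neighboring cylinders shifted slightly forward in time to reach the top face — yields $\|\nabla_x^m v\|_{L^\infty(\overline{\mathcal J})}\lesssim_m\rho_*^{-(m+1)}\lesssim_m h_2^{-(m+1)}$ for all $m$, which is the assertion (after adjusting $M_2$).

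The step I expect to require the most care is the middle one: turning the localized $H^1$ estimate into a genuinely quantitative statement for a suitable weak solution — one must (i) bound the pressure and cut-off error terms so that they are integrable in time with integral controlled by $P(\G)$ of the stated shape, and (ii) justify propagating the estimate forward from the regular slice $t=t_*$ by a continuation argument. The remaining ingredients — the a priori local energy bounds, the ODE comparison, and the elementary H\"older estimates feeding Theorem~\ref{thm-ereg} — are routine, the only discipline being to keep every constant of the claimed form: a universal power of $\G+1$ times a constant depending only on $r,a,b$.
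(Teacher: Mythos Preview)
Your approach is essentially correct but takes a genuinely different route from the paper's. The paper argues by contradiction via a covering argument on the time axis: it sets $R\sim(\G+1)^{-1}$ and, assuming no good time $t$ exists, finds for each $t$ in the interval a scale $r(t)\in(\sigma R,R)$ and center $x(t)$ with $\E(r(t),(x(t),t))\ge\e_1$ (hence $E\gtrsim\e_1$), applies Vitali to the $t$-intervals $(t-r^2(t),t)$, and sums the disjoint Dirichlet integrals to reach the contradiction $\tfrac16\le\sum 5r^2(t)\le\tfrac1{10}$; Lemma~\ref{lem-criterion} then supplies the regularity around the good time. This is very short and uses only machinery already set up in the paper. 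Your route --- Chebyshev to locate a regular $H^1$ time slice, a localized $H^1$ propagation estimate $y'\le Cy^3+f$, then the subcritical $L^\infty_tL^6_x$ bound fed into Theorem~\ref{thm-ereg} --- is more classical and self-contained (it avoids Lemma~\ref{lem-criterion}), but the middle step is heavier than you indicate: the cutoff commutators produce terms of the type $\int|\omega|^2\,v\cdot\nabla\chi^2$ (or their pressure analogues via $p_2$) that do not sit purely in an $f$ with $\int f\le P(\G)$; they are more honestly of the form $g(t)\,y$ with $\int g\lesssim P(\G)$, and one must check that the comparison ODE $y'\le Cy^3+g(t)y+h(t)$ still yields a lifespan polynomial in $\G$ (it does, after a short computation). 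The paper also remarks on an even simpler alternative, closer in spirit to yours but lighter: a pigeonhole on $\int|v|^{10/3}\,dxdt$ and $\int(\int|p|^{5/3}dx)^{9/10}dt$ --- quantities of parabolic dimension strictly less than $2$ --- directly produces a short time interval on which Theorem~\ref{thm-ereg} applies at every point, bypassing both the covering argument and the $H^1$ propagation entirely.
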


We mention the recent interesting work of Tao \cite{Tao}, where a quantitative triple-exponential improvement of the $L_t^\infty L_x^3$ regularity criterion is proved. The idea is to replace the compactness arguments and unique continuation methods in \cite{Lin} by suitable quantitative counterparts.  See also the works \cite{BarkerPrange, Palasek, Barkernew} for further developments in this direction. Note that certain quantitative annuli and epochs of regularity are proved for $L^\infty_tL^3_x$ global-in-space solutions, and played important roles in \cite{Tao}.

\subsubsection{Applications in the axially symmetric case}

In cylindrical coordinates $(\varrho, \theta, z)$, we have the orthonormal frame
\begin{equation}
e_\varrho = \( \begin{matrix}
\cos \theta \\
\sin \theta \\
0
\end{matrix}\), \quad e_\theta = \( \begin{matrix}
-\sin \theta \\
\cos \theta \\
0
\end{matrix}\), \quad e_z = \( \begin{matrix}
0 \\
0 \\
1
\end{matrix}\),
\end{equation}
under which we can write $v = v^\varrho e_\varrho + v^\theta e_\theta + v^z e_z$. We say that $(v,p)$ is axially symmetric, if the functions $v^\varrho, v^\theta, v^z, p$ are independent of $\theta \in [0,2\pi)$. An important feature of the axially symmetric Navier-Stokes equations (ASNS) is that the scalar function $\Gamma = \varrho v^\theta$ satisfies the transport-diffusion equation
\begin{equation} \label{eq-Gamma-intro}
\partial_t \Gamma - \Delta \Gamma + \frac{2}{\varrho} \partial_\varrho \Gamma + b \cdot \nabla \Gamma = 0
\end{equation}
where $b = v^\varrho e_\varrho + v^z e_z$. The maximum principle for  \eqref{eq-Gamma-intro} gives the scale-invariant a priori bound for mild solutions to the Cauchy problem:
\begin{equation} \label{eq-Gamma-apr}
\|\Gamma(t)\|_{L^\infty(\R^3)} \le \|\Gamma(0)\|_{L^\infty(\R^3)}.
\end{equation}
Also, by the technique of Moser iteration, $\Gamma$ is locally bounded for any axially symmetric suitable weak solution \cite{Seregin0}. 

In \cite{LeiZhang}, the first author together with Zhang proved global regularity for mild solutions under the assumption that $\Gamma$ enjoys a rather weak modulus of continuity $|\Gamma| \le C_* |\ln \varrho|^{-2}$ near the $z$-axis. The proof indicates that the global regularity of ASNS is of a critical nature.  In \cite{Wei}, Wei weakened the continuity assumption to $|\Gamma| \le |\ln \varrho|^{-\frac32}$.  Moreover, in \cite{LeiZhang, Wei}, for the Cauchy problem  it is shown that when $\|\Gamma\|_{L^\infty}$ is small respect to the initial data,  the mild solution stays regular for all times. However, we emphasize that the latter small-swirl regularity results rely heavily on the \emph{global-in-space} energy estimates for the unknowns $(J, \Omega) = \(\frac{\omega^\varrho}{\varrho}, \frac{\omega^\theta}{\varrho}\)$ where $\omega = \omega^\varrho e_\varrho + \omega^\theta e_\theta + \omega^z e_z$ is the vorticity, which do not allow the use of cut-off functions in the $z$-direction. See also \cite{LiuYanlin} for a related study for critical initial data.

In Section \ref{sec-4}, as an application of Theorem \ref{thm-interval}, we prove a quantitative local small-swirl regularity criterion for ASNS. Note that it is not so difficult to prove a non-quantitative version (see Proposition \ref{prop-small-Gamma}) based on contradiction and compactness arguments. However, the quantification of the proof is a nontrivial task.  Using Theorem \ref{thm-interval}, we resolve the difficulties in localizing the energy estimates for $(J, \Omega)$ and thus obtain:

\begin{mainthm}[local small-swirl regularity] \label{thm-small-Gamma}
Let $v$ be an axially symmetric local suitable weak solution to the Navier-Stokes equation in $Q(1)$. There exists an absolute constant $M_4>0$ such that, if
\begin{equation} \label{eq-Gamma-condition}
\|\Gamma\|_{L^\infty(Q(1))} \le M_4^{-1} \big[(\min\{\H, \G\}+1) \ln(\G+2)\big]^{-\frac32}, 
\end{equation} 
then $v$ is regular in $\overline{Q(\frac12)}$.
\end{mainthm}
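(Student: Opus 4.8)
The plan is to combine the quantitative interval of regularity (Theorem \ref{thm-interval}) with a localized energy estimate for the good unknowns $(J,\Omega) = \bigl(\tfrac{\omega^\varrho}{\varrho}, \tfrac{\omega^\theta}{\varrho}\bigr)$, using the smallness of $\|\Gamma\|_{L^\infty}$ to absorb the bad nonlinear terms. First I would invoke Theorem \ref{thm-interval} (with suitable fixed choices of $\rho, a, b$, say $\rho = \tfrac34$ and an interval around $x_3 = 0$) to produce a slab $\mathcal{I} = \{(x,t) \in Q(\tfrac34) : z_1 < x_3 < z_2\}$ of width $h_1 = (\G+2)^{-M_1(\min\{\H,\G\}+1)}$ in which $v$ is regular with the pointwise derivative bounds \eqref{eq-00114}. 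This slab gives me a region where I have genuine (quantitative, $h_1$-dependent) control on $v$, $\nabla v$, and hence on $b = v^\varrho e_\varrho + v^z e_z$, $\Gamma$, $J$, $\Omega$ and their derivatives — crucially, control that holds on an open neighborhood of the plane $x_3 = \tfrac{z_1+z_2}{2}$, which is exactly what is needed to justify cutting off in the $z$-direction.

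Next I would set up the energy estimate. Recall that $(J,\Omega)$ satisfy a coupled parabolic system of the schematic form $\partial_t J - (\Delta - \tfrac{2}{\varrho}\partial_\varrho) J + b\cdot\nabla J = (\text{quadratic in } \nabla(v^\varrho,v^z)/\varrho \text{ and } J) + \partial_z\bigl(\tfrac{\Gamma^2}{\varrho^4}\bigr)$-type terms, and $\partial_t \Omega - (\Delta - \tfrac{2}{\varrho}\partial_\varrho)\Omega + b\cdot\nabla\Omega = 2\tfrac{v^\varrho}{\varrho}\Omega + \partial_z\bigl(\tfrac{\Gamma^2}{\varrho^4}\bigr)$ (I would take the precise forms from \cite{LeiZhang, Wei}). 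Testing against $(J,\Omega)$ with a spatial cutoff $\psi(x)$ supported inside the slab $\mathcal{I}$ — here is where localization becomes possible, because on $\operatorname{supp}\nabla\psi$ we sit inside the regular region and all commutator/boundary terms involving $\nabla\psi$ are controlled by the $h_1$-dependent bounds of Theorem \ref{thm-interval} — one arrives at a differential inequality for $\int \psi^2(|J|^2+|\Omega|^2)\,dx$. The forcing term $\partial_z(\Gamma^2/\varrho^4)$ and the quadratic swirl-free terms are the dangerous ones; the point is that, after integration by parts and Hardy/interpolation inequalities in $\varrho$, they come with a factor proportional to $\|\Gamma\|_{L^\infty}^2$ (or a product of $\|\Gamma\|_{L^\infty}$ with the energy), so if $\|\Gamma\|_{L^\infty}$ is small \emph{relative to the quantities appearing in the constants of Theorem \ref{thm-interval}} — precisely the threshold $M_4^{-1}[(\min\{\H,\G\}+1)\ln(\G+2)]^{-3/2}$, whose $\tfrac32$ power and logarithm are dictated by how $h_1$ enters the Gronwall constant — these terms are absorbed into the dissipation and the good linear part. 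A Gronwall argument then yields a bound on $\int_{\mathcal{I}'} (|J|^2+|\Omega|^2)\,dx\,dt$ on a slightly smaller slab $\mathcal{I}'$, and by a Biot–Savart/elliptic estimate this upgrades to control of $\|\nabla v\|$ in the relevant slab.

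Finally, I would convert this into a bound on a scale-invariant quantity at the origin: the localized $(J,\Omega)$ estimate controls $\tfrac1r\int_{Q(r)} |\nabla v|^2\,dx\,dt$ (or its $L^{9/5}$ surrogate $\E(r)$) for $r$ small, and one checks this is $\le \e_{**}$ (respectively below the threshold of Lemma \ref{lem-criterion}); then the CKN criterion Theorem \ref{thm-ckn-criterion} gives $(0,0) \notin \mathcal{S}[v]$. To get regularity in all of $\overline{Q(\tfrac12)}$ rather than just at the origin, I would note that the same argument applies at every point of $\overline{Q(\tfrac12)}$ after translation (the hypothesis \eqref{eq-Gamma-condition} is stated on $Q(1)$, so translated cylinders of radius $\tfrac12$ still sit inside $Q(1)$ up to a harmless rescaling, and $\G,\H$ on subcylinders are no larger), so $\mathcal{S}[v]\cap\overline{Q(\tfrac12)} = \emptyset$. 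The main obstacle I anticipate is the second step: carefully localizing the $(J,\Omega)$ energy estimates — which in \cite{LeiZhang, Wei} are fundamentally global-in-$z$ because of the $\tfrac{2}{\varrho}\partial_\varrho$ drift and the Hardy-type terms near the axis — and tracking the exact power of $h_1$ (hence of $\min\{\H,\G\}$ and $\ln(\G+2)$) that survives in the Gronwall exponent, since this is what forces the precise $\tfrac32$-power smallness condition on $\|\Gamma\|_{L^\infty}$. Getting the bookkeeping of these constants right, and ensuring the cutoff errors are genuinely controlled by Theorem \ref{thm-interval} rather than by some uncontrolled global quantity, is the crux.
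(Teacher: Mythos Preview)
Your overall strategy---use Theorem \ref{thm-interval} to make the Lei--Zhang/Wei energy argument for $(J,\Omega)$ local in $z$---is the paper's strategy too, but you have the geometry backwards in a way that breaks the argument. You propose to produce \emph{one} thin regular slab $\mathcal{I}$ of width $h_1 \ll 1$ somewhere in $(a,b)$ and then run the $(J,\Omega)$ estimate with a cutoff $\psi$ supported \emph{inside} $\mathcal{I}$. But if $\operatorname{supp}\psi\subset\mathcal{I}$ and $v$ is already regular there, the energy estimate yields nothing new; and since Theorem \ref{thm-interval} does not let you choose where in $(a,b)$ the interval lands, there is no reason the slab contains the origin, so your final appeal to the CKN criterion has no input. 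The correct use is the opposite: apply Theorem \ref{thm-interval} \emph{twice}, once with $(a,b)\subset(-\tfrac23,-\tfrac12)$ and once with $(a,b)\subset(\tfrac12,\tfrac23)$, to locate two hyperplanes $z=z_1<-\tfrac12$ and $z=z_2>\tfrac12$ on which $v$ and its derivatives are pointwise bounded by $(\G+2)^{C(\mathcal{H}+1)}$. The energy identity is then run on the \emph{wide} domain $\mathcal{D}=\{z_1<z<z_2,\ \varrho<\tfrac15,\ t_0<t<0\}$, which contains $Q(\tfrac12)\cap\{\varrho<\tfrac15\}$, with a cutoff only in $\varrho$; the genuine boundary integrals at $z=z_1,z_2$ (not cutoff commutators) are what the regularity controls.

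Two further omissions: you need Proposition \ref{prop-epoch} to find an initial time $t_0\in(-\tfrac23,-\tfrac12)$ at which $v$ is regular, so that the initial value of $H(t)=\|J\varphi\|_{L^2}^2+\tfrac{\sigma^{2/3}}{2}\|\Omega\varphi\|_{L^2}^2$ is quantitatively bounded before Gronwall. And the $\tfrac32$-power in \eqref{eq-Gamma-condition} is not a generic bookkeeping artifact of $h_1$: it comes specifically from Wei's weighted inequalities \cite[Lemma 2.3]{Wei}, whose gain factor is $K(\sigma)\sim\exp(\tfrac12\sigma^{-2/3})$. Requiring $\sigma^{7/3}K(\sigma)^2$ to dominate the boundary-term size $(\G+2)^{C(\mathcal{H}+1)}$ forces $\sigma^{-2/3}\gtrsim(\mathcal{H}+1)\ln(\G+2)$, i.e.\ $\sigma\lesssim[(\mathcal{H}+1)\ln(\G+2)]^{-3/2}$.
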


 It is desirable to replace the condition \eqref{eq-Gamma-condition} by an absolute smallness assumption $\|\Gamma\|_{L^\infty(Q(1))} \le c$, where $c$ is independent of $v^{\varrho}$ and $v^z$. To the best of our knowledge, this is beyond the reach of existing methods.

An interesting application of Theorem \ref{thm-small-Gamma} is a slightly supercritical improvement of the CKN criterion for ASNS. Although we have improved partial regularity for the general Navier-Stokes equations, we can only improve the CKN criterion in the axially symmetric case.  In the important works \cite{SereginActa, CSYT1, CSTY2}, regularity of ASNS is proved under the pointwise Type I bound 
\begin{equation} \label{eq-pw-upper-bound}
|v| \le \frac{C_*}{\varrho^{\alpha} \sqrt{-t}^{1-\alpha}}, \quad \alpha \in [0,1].
\end{equation}
  In \cite{Seregin0}, the  \emph{one-point} Type I result for suitable weak solutions to ASNS
\begin{equation} \label{eq-seregin-onept}
\limsup_{r \to 0+} \frac{1}{r} \int_{Q(r)} |\nabla v|^2 dxdt    < +\infty \implies (0,0) \notin \mathcal{S}[v].
\end{equation}
is established via compactness arguments and Liouville theorem. There are a few slightly supercritical generalizations of the Type I results. A slight improvement of  \eqref{eq-pw-upper-bound}, $\alpha =0$ case, is given by Pan \cite{Pan}.  By the more recent works \cite{ChenTsaiZhang, Seregin2} (and Lemma \ref{lem-log-grow}), regularity of a local suitable weak solution can be proved if, for a small $\nu>0$,
\begin{equation} \label{eq-ctz}
\sup_{w\in Q(1)} \limsup_{r \to 0+} \frac{\int_{Q(r,w)} |\nabla v|^2 dxdt}{r (\ln |\ln r|)^\nu}     < +\infty.
\end{equation}
Our next result is a corollary of Theorem \ref{thm-small-Gamma}, from which all the (slightly supercritical) Type I results in the literature can be deduced (see Remark \ref{rem-final} for details).

\begin{mainthm}[Improved CKN criterion] \label{thm-improved-ckn}
 There exists an absolute constant $\mu>0$ such that, for any axially symmetric local suitable weak solution $v$  to the Navier-Stokes equations in $Q(1)$,
\begin{equation} \label{eq-improved-ckn}
\limsup_{r \to 0+} \frac{\int_{Q(r)} |\nabla v|^2 dxdt }{r (\ln |\ln r|)^\mu}   < +\infty \implies (0,0) \notin \mathcal{S}[v].
\end{equation}
\end{mainthm}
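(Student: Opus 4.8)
\emph{Proof plan.} The plan is to deduce Theorem \ref{thm-improved-ckn} from the local small-swirl criterion Theorem \ref{thm-small-Gamma} by a blow-up argument. Fix a sufficiently small absolute constant $\mu>0$, assume \eqref{eq-improved-ckn}, and suppose towards a contradiction that $(0,0)\in\mathcal{S}[v]$. Put $L:=\limsup_{r\to 0+}\bigl(r(\ln|\ln r|)^\mu\bigr)^{-1}\int_{Q(r)}|\nabla v|^2\,dxdt<+\infty$ and fix $r_0\in(0,1)$ so small that $E(r):=\tfrac1r\int_{Q(r)}|\nabla v|^2\,dxdt\le 2L(\ln|\ln r|)^\mu$ for all $0<r\le r_0$. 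For $0<\lambda<r_0$ set $v_\lambda:=v^{(\lambda)}$ as in \eqref{eq-scaling}; it is again an axially symmetric local suitable weak solution in $Q(1)$. A change of variables gives $\H[v_\lambda]=E(\lambda)$ and $\G[v_\lambda]=C(\lambda)+D(\lambda)=:\G_\lambda$, where $C(r):=r^{-2}\int_{Q(r)}|v|^3\,dxdt$ and $D(r):=r^{-2}\int_{Q(r)}|p|^{3/2}\,dxdt$; moreover, since $\Gamma=\varrho v^\theta$ is invariant under \eqref{eq-scaling}, the swirl of $v_\lambda$ is $(x,t)\mapsto\Gamma(\lambda x,\lambda^2t)$, so its $L^\infty(Q(1))$ norm equals $\|\Gamma\|_{L^\infty(Q(\lambda))}$ (finite by \cite{Seregin0}). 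Hence it is enough to verify condition \eqref{eq-Gamma-condition} for $v_\lambda$ for some small $\lambda$: Theorem \ref{thm-small-Gamma} then yields that $v_\lambda$ is regular in $\overline{Q(1/2)}$, i.e.\ $v$ is regular in $\overline{Q(\lambda/2)}$, contradicting $(0,0)\in\mathcal{S}[v]$.

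I would first bound the right-hand side of \eqref{eq-Gamma-condition} for $v_\lambda$ from below. By hypothesis $\H[v_\lambda]=E(\lambda)\lesssim(\ln|\ln\lambda|)^\mu$, hence $\min\{\H[v_\lambda],\G_\lambda\}+1\lesssim(\ln|\ln\lambda|)^\mu$. To control $\ln(\G_\lambda+2)$ one runs the usual iteration of the scale-invariant quantities $A(r),E(r),C(r),D(r)$ — the local energy inequality \eqref{eq-local-energy}, the interpolation bound for $C(r)$ in terms of $A(r)$ and $E(r)$, and the pressure estimate $D(\theta r)\lesssim\theta D(r)+\theta^{-2}C(r)$: since $E(r)$ is finite and grows at most like $(\ln|\ln r|)^\mu$ down to scale $r_0$, the quantities $A,C,D$ stay bounded by a fixed power of $\ln|\ln r|$ with a $v$-dependent constant, so $\G_\lambda\lesssim_v(\ln|\ln\lambda|)^{c_0}$ for an absolute $c_0$ and $\ln(\G_\lambda+2)\lesssim\ln|\ln\lambda|$. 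Consequently, for all small $\lambda$,
\begin{equation*}
M_4^{-1}\bigl[(\min\{\H[v_\lambda],\G_\lambda\}+1)\,\ln(\G_\lambda+2)\bigr]^{-3/2}\ \gtrsim\ \bigl((\ln|\ln\lambda|)^{\mu}\,\ln|\ln\lambda|\bigr)^{-3/2}\ =\ (\ln|\ln\lambda|)^{-\frac32(1+\mu)}.
\end{equation*}

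The core of the argument is the matching decay of the left-hand side: $\|\Gamma\|_{L^\infty(Q(\lambda))}$ must tend to $0$ faster than $(\ln|\ln\lambda|)^{-\frac32(1+\mu)}$. Here one uses that $\Gamma$ is locally bounded \cite{Seregin0}, vanishes identically on the $z$-axis, and solves the homogeneous transport–diffusion equation \eqref{eq-Gamma-intro}, whose drift $b=v^\varrho e_\varrho+v^z e_z$ is controlled, through $A(r)$ and $E(r)$, by the slowly growing bound of the previous step. A De Giorgi–Nash–Moser type oscillation estimate for \eqref{eq-Gamma-intro} with this a priori control on $b$ (this is essentially Lemma \ref{lem-log-grow}; compare \cite{ChenTsaiZhang,Seregin2}) produces a quantitative modulus of continuity for $\Gamma$ at the axis, and because \eqref{eq-Gamma-intro} is homogeneous and the drift is only mildly supercritical for small $\mu$, the oscillation of $\Gamma$ over $Q(\theta^k\cdot)$ contracts fast enough that $\|\Gamma\|_{L^\infty(Q(\lambda))}=o\bigl((\ln|\ln\lambda|)^{-N}\bigr)$ for every $N$ (the genuine decay in $|\ln\lambda|$ being far stronger). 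In particular this beats $(\ln|\ln\lambda|)^{-\frac32(1+\mu)}$ for $\lambda$ small, so \eqref{eq-Gamma-condition} holds for $v_\lambda$ at such $\lambda$, and the desired contradiction follows.

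I expect the main obstacle to be the swirl-decay estimate of the third paragraph: producing an effective modulus of continuity for $\Gamma$ at the axis from only the slightly supercritical control $E(r)\lesssim(\ln|\ln r|)^\mu$ on the drift, uniformly down to the potential singular point — it is here (and in keeping the iteration of scale-invariant quantities closed) that the smallness of $\mu$ is used. The scaling identities and the iteration of $A,C,D$ in the first two steps are, by contrast, routine once Theorem \ref{thm-small-Gamma} is available.
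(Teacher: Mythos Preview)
Your proposal is correct and follows essentially the same route as the paper's own proof: use Lemma~\ref{lem-log-grow} (the iteration of scale-invariant quantities) to bound $(A+F+D)(\lambda)$ by a power of $\ln|\ln\lambda|$, invoke the oscillation decay result of \cite{ChenTsaiZhang,Seregin2} to obtain the stretched-exponential modulus $\|\Gamma\|_{L^\infty(Q(\lambda))}\le C(v)\exp(-c|\ln\lambda|^{1/2})$, and then apply Theorem~\ref{thm-small-Gamma} to the rescaled solution $v_\lambda$. The paper presents this as a direct argument rather than by contradiction, but the substance is identical; one small clarification: Lemma~\ref{lem-log-grow} supplies only the a~priori control on the drift, while the oscillation estimate for $\Gamma$ genuinely comes from \cite{ChenTsaiZhang,Seregin2}.
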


Theorem \ref{thm-improved-ckn} directly improves Seregin's one-point result \eqref{eq-seregin-onept}. Our condition \eqref{eq-improved-ckn} is much weaker than \eqref{eq-ctz} since the bound is assumed only \emph{at a single point}.  We emphasize that a direct consequence of Theorem \ref{thm-improved-ckn} or \eqref{eq-ctz} reads: there exist  $\kappa, \sigma>0$ such that for all $0 \le \alpha \le 1$, we have
$$|v| \le C_* \frac{ ( \ln |\ln \varrho|)^\kappa \cdot (\ln |\ln (-t)|)^\sigma }{\varrho^\alpha \sqrt{-t}^{1-\alpha}}  \ \mbox{in} \ Q(r) \ \mbox{for some} \ r>0  \implies (0,0) \notin \mathcal{S}[v].$$
This obviously improves the Type I criteria in \cite{SereginActa, CSYT1, CSTY2} and generalizes the result of \cite{Pan}.

Finally, we point out that via standard covering arguments, Theorem \ref{thm-improved-ckn} implies the partial regularity result $\mathcal{P}^f(\S[v]) = 0$ for $f = r(\ln |\ln r|)^\mu$, a statement weaker than Theorem \ref{thm-partial-reg}. It is an interesting question whether \eqref{eq-improved-ckn} still holds true with $(\ln |\ln r|)^\mu$ in the denominator replaced by $|\ln r|$. We also mention that the recent numerical results \cite{Hou} seem to suggest that ASNS may develop singularity in a nearly self-similar scenario where the blow-up rate is Type I with some logarithmic corrections.

\subsection{Organization}

In Section  \ref{sec-pre}, we introduce the notations to be used throughout the paper, and state a few preliminary lemmas on scale-invariant quantities and coverings. In Section \ref{sec-2}, we use a quantitative version of the CKN criterion and the pigeonhole principle to  prove Theorem \ref{thm-partial-reg}. In Section \ref{sec-3}, we prove the quantitative results concerning the intervals and epochs of regularity, \emph{i.e.}, Theorem \ref{thm-interval} and Proposition \ref{prop-epoch}. In Section \ref{sec-4}, for comparison, we first prove a non-quantitative version of Theorem \ref{thm-small-Gamma} using a compactness argument. Then we apply the results of  Section \ref{sec-3} to show Theorem \ref{thm-small-Gamma}. Theorem \ref{thm-improved-ckn} is finally deduced as a corollary of Theorem \ref{thm-small-Gamma}. The proofs of the auxilliary Lemmas \ref{lem-Gamma-bound} \& \ref{lem-est-away} are given in the Appendix.

\section{Preliminaries} \label{sec-pre}

\subsection{Notations} \label{sec-notations}
\begin{itemize}

\item We write $c, C$ or $O(1)$ for positive absolute constants whose values may vary from line to line. We write $X \lesssim Y$ for $X \le CY$. $X \sim Y$ stands for $X \lesssim Y$ and $Y \lesssim X$. $O_m(1)$ stands for a positive constant which depends only on $m$.  $X \lesssim_m Y$ means that $X \le O_m(1) \, Y$.

\item For a point $w = (x,t) \in \R^3 \times \R$, we denote
$$B(r, x) = \{y \in \R^3: |x-y|<r\},$$
and
$$Q(r,w) = B(r,x) \times (-r^2 + t, t).$$
For simplicity, we write $Q(r) = Q(r,(0,0))$ and $B(r) = B(r, (0,0))$. We also need $Q^+(r,w) = B(r,x) \times (-r^2+t, r^2+t)$.

\item We shall use the following scale-invariant quantities.
\begin{equation*}
A(r) = \esssup_{t \in (-r^2, 0)} \frac{1}{r} \int_{B(r)} |v(x,t)|^2 dx, \quad E(r) = \frac{1}{r} \int_{Q(r)} |\nabla v|^2 dxdt,
\end{equation*}
\begin{equation*}
F(r) = \frac{1}{r^2} \int_{Q(r)} |v(x,t)|^3 dxdt, \quad D(r) = \frac{1}{r^2} \int_{Q(r)} |p(x,t)|^\frac32 dxdt,
\end{equation*}
\begin{equation}
\E(r) = r^{-\frac{14}{9}} \left( \int_{Q(r)} |\nabla v|^{\frac95} dx dt \right)^{\frac{10}{9}}.
\end{equation}
Using H\"older's inequality, it is clear that 
\begin{equation} \label{eq-tildeE-E}
\E(r) \le \(\frac43 \pi\)^{\frac19} E(r).
\end{equation}
$A(r,w), E(r,w)$, \emph{etc.} will denote the corresponding scale-invariant quantities for $Q(r,w)$ instead of $Q(r)$. Moreover, we introduce
$$E^+(r,w) =  \frac{1}{r} \int_{Q^+(r,w) \cap Q(1)} |\nabla v|^2 dxdt.$$

\item We use the usual notations $L^s_t L^l_x$ for the spacetime mixed Lebesgue spaces, \emph{e.g.},
\begin{equation}
\|v\|_{L^\infty_t L^2_x(Q(1))} = \esssup_{-1<t<0} \(\int |v|^2 dx \)^\frac12.
\end{equation}
For simplicity, we write $L^s(Q(1)) = L_t^s L_x^s(Q(1))$.

\end{itemize}

\subsection{Iteration of scale-invariant quantities} \label{sec-iter}

We start with the iterations of scale-invariant quantities similar to those in \cite{Lin, LadySer}. We shall see that in many of such estimates,  the scale-invariant quantity $E$ which plays an important role can be replaced by $\E$.

\begin{lemma} \label{lem-multiplicative}
For any $0<r \le \rho$, we have the multiplicative estimates
\begin{equation} \label{eq-Fr}
F(r) \lesssim \( \frac{r}{\rho} \)^3 \min \left\{ F(\rho), A^{\frac32}(\rho) \right\} + \( \frac{\rho}{r} \)^2  \min \left\{ A^{\frac35}(\rho) \E^{\frac{9}{10}}(\rho), A^\frac12(\rho) E(\rho) \right\},
\end{equation}
and
\begin{equation} \label{eq-Dr}
D(r) \lesssim \frac{r}{\rho} D(\rho) + \(\frac{\rho}{r}\)^2 \min \left\{ A^{\frac35}(\rho) \E^{\frac{9}{10}}(\rho), A^\frac12(\rho) E(\rho) \right\}.
\end{equation}
\end{lemma}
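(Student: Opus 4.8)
\textbf{Proof plan for Lemma \ref{lem-multiplicative}.}

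The plan is to decompose the velocity into a harmonic/Stokes-type part and a correction, which is the standard strategy behind such multiplicative estimates in \cite{CKN, Lin, LadySer}; the only novelty is to carry the quantity $\E$ through the argument in place of $E$. First I would prove the bound \eqref{eq-Fr} for $F(r)$. Fix $0 < r \le \rho$ and let $\bar v$ denote the average of $v$ over $B(\rho)$ at each fixed time (or over $Q(\rho)$); by Sobolev–Poincar\'e applied slicewise, one controls $\|v - \bar v\|_{L^2_x(B(\rho))}$ and, more usefully for the cubic term, $\|v - \bar v\|_{L^6_x}$ by $\|\nabla v\|_{L^2_x}$. Then I would split
$$\int_{Q(r)} |v|^3\,dxdt \lesssim \int_{Q(r)} |v - \bar v|^3\,dxdt + \int_{Q(r)} |\bar v|^3\,dxdt.$$
For the first term, interpolate $L^3_x$ between $L^2_x$ and $L^6_x$ on the ball $B(r) \subset B(\rho)$: this is where the exponents $\tfrac{14}{9}, \tfrac95, \tfrac{9}{10}$ come from, since one wants to use $\|\nabla v\|_{L^{9/5}_{x,t}}$ rather than $\|\nabla v\|_{L^2_{x,t}}$ in time. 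Precisely, one uses H\"older in time to pass from an $L^2_t$ bound on $\|\nabla v\|_{L^2_x}$ to the mixed norm defining $\E$, paying a factor of $r$ or $\rho$ to restore scale invariance; the min in \eqref{eq-Fr} then records the two admissible choices — either estimate via $A^{3/5}\E^{9/10}$ or via the cruder $A^{1/2}E$. For the mean part $|\bar v|$, since $\bar v$ is constant in $x$ on $B(\rho)$ one has $\int_{Q(r)}|\bar v|^3 \lesssim r^3 \sup_t |\bar v|^3 \lesssim r^3 \rho^{-3}\int_{Q(\rho)}|v|^3$ by Jensen, and also $\lesssim r^3 \rho^{-3} A^{3/2}(\rho)\cdot\rho^{?}$ after using $|\bar v|^2 \lesssim \rho^{-3}\int_{B(\rho)}|v|^2$; bookkeeping the powers of $r/\rho$ gives the first min-term $(r/\rho)^3\min\{F(\rho), A^{3/2}(\rho)\}$.

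Next I would prove \eqref{eq-Dr} for the pressure. Here the point is that $p$ is, up to harmonic corrections, a singular integral of $v\otimes v$. Introduce a cutoff $\chi$ supported in $B(\rho)$, equal to $1$ on $B(\rho/2)$ (or on the appropriate intermediate ball), and decompose $p = p_1 + p_2$ on the smaller ball, where $-\Delta p_1 = \partial_i\partial_j(\chi\, v_i v_j)$ is the Newtonian potential part and $p_2$ is harmonic in $B(\rho/2)$. For $p_2$, harmonicity gives interior estimates: $\|p_2\|_{L^\infty(B(r))}^{3/2}\lesssim \rho^{-3}\int_{B(\rho)}|p_2|^{3/2}$, and since $\|p_2\|_{L^{3/2}} \lesssim \|p\|_{L^{3/2}} + \|p_1\|_{L^{3/2}}$, this term contributes the $\tfrac r\rho D(\rho)$ piece (after integrating in time and tracking scalings — the linear power $r/\rho$ is the gain from harmonicity, exactly as in \cite{Lin}). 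For $p_1$, Calder\'on–Zygmund gives $\|p_1\|_{L^{3/2}_x}\lesssim \|\chi\, |v|^2\|_{L^{3/2}_x} = \||v|^2\|_{L^{3/2}_x(B(\rho))}$, i.e. $\|v\|_{L^3_x(B(\rho))}^2$; one then runs the \emph{same} interpolation used for $F$ — subtracting the mean is legitimate because adding a constant to $v$ does not change $\partial_i\partial_j(v_iv_j)$ modulo lower-order terms that are themselves harmonic or absorbable — to replace $\|v\|_{L^3_x}$ by the $A$–$\E$ (or $A$–$E$) combination, producing the $(\rho/r)^2\min\{A^{3/5}\E^{9/10}, A^{1/2}E\}$ term after the time integration and rescaling.

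The main obstacle, and the part requiring genuine care rather than routine computation, is the interpolation/H\"older bookkeeping that produces exactly the quantity $\E(r) = r^{-14/9}\big(\int_{Q(r)}|\nabla v|^{9/5}\big)^{10/9}$ with the right powers $A^{3/5}\E^{9/10}$ and the right power $(\rho/r)^2$ of the scale ratio. One must interpolate $L^3_x$ as $\|v-\bar v\|_{L^3_x} \le \|v-\bar v\|_{L^2_x}^{1-\beta}\|v-\bar v\|_{L^6_x}^{\beta}$ with $\beta = \tfrac12$, bound $\|v - \bar v\|_{L^6_x}\lesssim \|\nabla v\|_{L^2_x}$ — but then, to land on $L^{9/5}$ in space rather than $L^2$, one should instead use a Sobolev embedding $W^{1,9/5}_x \hookrightarrow L^{9/2}_x$ (valid in $3$D since $\tfrac{1}{9/5} - \tfrac13 = \tfrac{2}{9} = \tfrac{1}{9/2}$) and interpolate $L^3_x$ between $L^2_x$ and $L^{9/2}_x$; cubing, integrating in $t$, and applying H\"older in time with the conjugate exponents forced by $\tfrac95$ is what generates the weight $r^{-14/9}$ and the power $\tfrac{9}{10}$ on the gradient term, and the power $\tfrac35$ on $A$. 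Keeping the two alternative estimates (the $\E$-based one and the classical $E$-based one of \cite{Lin, LadySer}) separate and verifying that each is scale-invariant in $r/\rho$ with the stated exponents is the delicate step; everything else (Poincar\'e, Calder\'on–Zygmund, interior estimates for harmonic functions) is standard and I would cite \cite{Lin, LadySer} for the $E$-version and only spell out the modifications needed for the $\E$-version.
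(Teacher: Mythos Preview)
Your proposal is correct and follows essentially the same approach as the paper: split $v$ via its spatial mean on $B(\rho)$, handle the mean part by Jensen/H\"older to get the $(r/\rho)^3\min\{F,A^{3/2}\}$ term, and for the oscillation use exactly the Sobolev embedding $W^{1,9/5}_x\hookrightarrow L^{9/2}_x$ together with the interpolation $\|v-\bar v\|_{L^3_x}^3 \le \|v-\bar v\|_{L^2_x}^{6/5}\|v-\bar v\|_{L^{9/2}_x}^{9/5}$ to produce $A^{3/5}\E^{9/10}$; for the pressure the paper likewise decomposes $p=p_1+p_2$ with $p_1=-\Delta^{-1}\mathrm{div}\,\mathrm{div}\big(\mathbf{1}_{B(\rho)}(v-[v]_\rho)\otimes(v-[v]_\rho)\big)$ and $p_2$ harmonic, applying Calder\'on--Zygmund and interior estimates just as you outline. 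The only cosmetic difference is that the paper subtracts the mean directly inside the tensor defining $p_1$ rather than using a smooth cutoff $\chi$, which avoids the ``lower-order terms'' remark you make.
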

\begin{proof} 
We only prove the estimates involving $\E$, since the estimates involving $E$ are more standard and follow from the same method. Let $[v]_\rho$ be the mean value of $v$ on $B(\rho)$, then by H\"older's inequality, for $-\rho^2<t<0$,
\begin{align}
[v]_\rho(t)   \lesssim  \frac{1}{\rho^{\frac32}} \(\int_{B(\rho)} |v|^2 dx\)^{\frac12} \le \frac{1}{\rho} A^{\frac12}(\rho),
\end{align}
and
\begin{align}
[v]_\rho(t)   \lesssim  \frac{1}{\rho} \(\int_{B(\rho)} |v|^3 dx\)^{\frac13} 
\end{align}
Hence, we have
\begin{align}
\frac{1}{r^2} \int_{Q(r)} |[v]_\rho|^3 dxdt \lesssim  \( \frac{r}{\rho} \)^3 \min \{ F(\rho), A^{\frac32}(\rho) \}.
\end{align}
We estimate $F(r)$ using interpolation and the Sobolev embedding $W^{1,{\frac95}}_x \hookrightarrow L^{\frac92}_x$,
\begin{align} \label{eq-Fr-interp}
F(r) &= \frac{1}{r^2} \int_{Q(r)} |v|^3 dxdt \nonumber \\
&\lesssim \frac{1}{r^2} \int_{Q(r)} |[v]_\rho|^3 dxdt  + \frac{1}{r^2} \int_{Q(r)} |v - [v]_\rho|^3 dxdt \nonumber\\
&\lesssim  \( \frac{r}{\rho} \)^3 \min \{ F(\rho), A^{\frac32}(\rho) \}  +  \frac{1}{r^2} \|v-[v]_\rho\|_{L^\infty_t L^2_x(Q(\rho))}^{\frac65} \|v-[v]_\rho\|_{L^{\frac95}_t L^{\frac92}_x(Q(\rho))}^{\frac95} \nonumber \\
&\lesssim  \( \frac{r}{\rho} \)^3 \min \{ F(\rho), A^{\frac32}(\rho) \}  +  \frac{1}{r^2} \|v-[v]_\rho\|_{L^\infty_t L^2_x(Q(\rho))}^{\frac65} \|\nabla v\|_{L^{\frac95}_t L^{\frac95}_x(Q(\rho))}^{\frac95} \nonumber \\
&\lesssim \( \frac{r}{\rho} \)^3 \min \{ F(\rho), A^{\frac32}(\rho) \} + \( \frac{\rho}{r} \)^2  A^{\frac35}(\rho) \E^{\frac{9}{10}}(\rho).
\end{align}
To estimate $D(r)$, we use the well-known decomposition of pressure
\begin{equation}
p = p_1 + p_2,
\end{equation}
with $p_1$ is given by
\begin{equation} 
 p_1 = -\Delta^{-1} \mbox{div} \mbox{div} \( \mathbf{1}_{B(\rho)} (v-[v]_{\rho})\otimes(v-[v]_{\rho})\)
\end{equation}  
for each time $-\rho^2<t<0$. Clearly, $p_2$ is harmonic in space, hence
\begin{align}
\int_{Q(r)} |p_2|^{\frac32} \le \left(\frac{r}{\rho}\right)^3 \int_{Q(\rho)} |p_2|^{\frac32}.
\end{align}
By the theory of singular integrals, $p_1$ satisfies the estimate
\begin{align} \label{eq-220}
\int_{Q(\rho)} |p_1|^{\frac32} \lesssim \int_{Q(\rho)} |v-[v]_{B_\sigma(\rho)}|^3.
\end{align}
As we have shown in \eqref{eq-Fr-interp}, 
\begin{align} \label{eq-220}
\int_{Q(\rho)} |v-[v]_{B_\sigma(\rho)}|^3 \lesssim \rho^2 A^{\frac35}(\rho) \E^{\frac{9}{10}}(\rho).
\end{align}
Hence, combining the above estimates, we have
\begin{align}
D(r) &= \frac{1}{r^2} \int_{Q(r)} |p|^{\frac32} dxdt \nonumber \\
&\lesssim \frac{1}{r^2} \int_{Q(r)} |p_1|^{\frac32} dxdt + \frac{1}{r^2} \int_{Q(r)} |p_2|^{\frac32} dxdt \nonumber \\
&\lesssim  \(\frac{1}{r^2} + \frac{r}{\rho^3}\) \int_{Q(r)} |p_1|^{\frac32} dxdt + \frac{r}{\rho^3} \int_{Q(\rho)} |p|^{\frac32} dxdt \nonumber \\
&\lesssim \(\frac{\rho}{r}\)^2 A^{\frac35}(\rho) \E^{\frac{9}{10}}(\rho) + \frac{r}{\rho} D(\rho).
\end{align}
\end{proof}

\begin{lemma} \label{lem-energy-iter}
Let $(v, p)$ be a local suitable weak solution to the Navier-Stokes equations in $Q(\rho)$, then for $0<2r<\rho$,
\begin{align}
A(r) + E(r) &\lesssim F^{\frac23}(2r) + F^{\frac13}(2r) D^{\frac23}(2r) \nonumber \\
&\quad + \min \left\{ F^{\frac49}(2r)  A^{\frac13}(2r) \E^{\frac12}(2r), \  F^{\frac13}(2r) A^\frac12(2r) E^\frac12(2r), \ F(2r) \right\} .
\end{align}
\end{lemma}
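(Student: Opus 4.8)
The plan is to run the classical localized energy estimate (cf.\ \cite{Lin, LadySer}) and to produce the three alternatives inside the $\min$ from three ways of handling the cubic term, using incompressibility to subtract spatial means throughout. First I would fix a cutoff $\phi$ adapted to the pair $Q(r)\subset Q(2r)$: smooth, $0\le\phi\le1$, $\phi\equiv1$ on a neighborhood of $\overline{Q(r)}$, $\phi(\cdot,t)$ supported in $B(2r)$ for every $t$, $\phi\equiv0$ for $t\le-(2r)^2$, and $|\nabla\phi|\lesssim r^{-1}$, $|\partial_t\phi|+|\Delta\phi|\lesssim r^{-2}$ (note $\phi$ need not vanish for $t\ge0$). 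Inserting $\phi$ into the local energy inequality \eqref{eq-local-energy} on $Q(\rho)\supset Q(2r)$ --- in the generalized form with a variable upper time limit $T<0$, and then letting $T\uparrow0$, the standard device to compensate for the fact that $\phi$ is not compactly supported near $t=0$ --- gives, after dividing by $r$,
\begin{align*}
A(r)+E(r)\ \lesssim\ & \tfrac1r\Big|\int_{Q(2r)}|v|^2(\partial_t\phi+\Delta\phi)\Big|+\tfrac1r\Big|\int_{Q(2r)}|v|^2\,v\cdot\nabla\phi\Big| \\
& +\tfrac1r\Big|\int_{Q(2r)}p\,v\cdot\nabla\phi\Big|,
\end{align*}
so it remains to estimate the three spacetime integrals on the right.

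For the first, H\"older on $Q(2r)$ with $|Q(2r)|\sim r^5$ gives $\int_{Q(2r)}|v|^2\lesssim r^3 F^{2/3}(2r)$, whence a contribution $F^{2/3}(2r)$. For the pressure term, since $\int_{B(2r)}v\cdot\nabla\phi\,dx=0$ at each time (here $\nabla\cdot v=0$ and the fact that $\phi(\cdot,t)$ is supported in $B(2r)$ enter), I may replace $p$ by $p-[p]_{2r}(t)$, its spatial mean over $B(2r)$; then $\|p-[p]_{2r}\|_{L^{3/2}(B(2r))}\lesssim\|p\|_{L^{3/2}(B(2r))}$, and H\"older in $x$ followed by H\"older in $t$ (conjugate exponents $\tfrac32$ and $3$ in each) give $\int_{Q(2r)}|p-[p]_{2r}|\,|v|\lesssim r^2 F^{1/3}(2r)D^{2/3}(2r)$, a contribution $F^{1/3}(2r)D^{2/3}(2r)$.

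The cubic term is where the $\min$ arises. The crude bound $|v|^2\,|v\cdot\nabla\phi|\lesssim r^{-1}|v|^3$ yields the third option $F(2r)$. For the other two I again use $\nabla\cdot v=0$ and that $\phi(\cdot,t)$ is supported in $B(2r)$ to replace $|v|^2$ by $|v|^2-[|v|^2]_{2r}(t)$, and then apply the Sobolev--Poincar\'e embedding $W^{1,1}_x(B(2r))\hookrightarrow L^{3/2}_x(B(2r))$ for mean-zero functions:
\[
\big\|\,|v|^2-[|v|^2]_{2r}\,\big\|_{L^{3/2}(B(2r))}\ \lesssim\ \big\|\nabla|v|^2\big\|_{L^1(B(2r))}\ \lesssim\ \|v\|_{L^{q}(B(2r))}\,\|\nabla v\|_{L^{q'}(B(2r))},\qquad \tfrac1q+\tfrac1{q'}=1.
\]
Pairing this with the leftover factor $\|v\|_{L^3(B(2r))}$ from $|v|^2\,v\cdot\nabla\phi$, integrating in $t$ by H\"older, and using the $L^\infty_tL^2_x$ bound $\sup_t\|v(t)\|_{L^2(B(2r))}^2\le 2r\,A(2r)$: the choice $q'=q=2$ produces the second option $F^{1/3}(2r)A^{1/2}(2r)E^{1/2}(2r)$, while the choice $q'=\tfrac95$, $q=\tfrac94$ (so that $\|v\|_{L^{9/4}}\le\|v\|_{L^2}^{2/3}\|v\|_{L^3}^{1/3}$) --- the variant bringing in $\E$, in the spirit of the proof of Lemma~\ref{lem-multiplicative} --- produces the first option $F^{4/9}(2r)A^{1/3}(2r)\E^{1/2}(2r)$. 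Adding the three contributions gives the asserted inequality.

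I do not expect a genuine analytic obstacle here; the work is organizational. The delicate point is the exponent bookkeeping in the cubic term: the H\"older exponents in $x$ and in $t$ must be chosen so that the lower-order terms in the Sobolev inequalities are absorbed by the Poincar\'e inequality for the mean-zero functions $|v|^2-[|v|^2]_{2r}$ (precisely where incompressibility is needed, as it also is for the pressure term), and so that the powers of $r$ balance, leaving the scale-invariant products above. The only real technical subtlety --- that the natural cutoff does not lie in $C_0^\infty$ near $t=0$ --- is handled by the standard limiting argument indicated above.
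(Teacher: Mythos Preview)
Your proposal is correct and follows essentially the same route as the paper: insert a cutoff adapted to $Q(r)\subset Q(2r)$ into the local energy inequality, bound the linear term by $F^{2/3}(2r)$ via H\"older, bound the pressure term by $F^{1/3}(2r)D^{2/3}(2r)$, and for the cubic term use incompressibility to replace $|v|^2$ by $|v|^2-[|v|^2]_{2r}$, apply the Sobolev--Poincar\'e inequality $W^{1,1}_x\hookrightarrow L^{3/2}_x$, and then split $\|\nabla|v|^2\|_{L^1_x}\lesssim\|v\|_{L^{q}_x}\|\nabla v\|_{L^{q'}_x}$ with $q'=2$ or $q'=\tfrac95$ to produce the $E$- and $\E$-alternatives. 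The paper carries out only the $\E$ case explicitly (declaring the others standard), and does not bother to subtract the mean of $p$, but the substance is the same and your exponent bookkeeping checks out.
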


\begin{proof}
We only prove the estimate involving $\E$, since the others are standard. By taking an appropriate test function in the local energy inequality \eqref{eq-local-energy}, and using interpolation,  we have
\begin{align}
A(r) + E(r) &\lesssim \frac{1}{r^3} \int_{Q(2r)} |v|^2 dxdt + \frac{1}{r^2} \int_{Q(2r)} |v|\(|v|^2 - [|v|^2]_{2r} + 2p\) \nonumber\\
& \lesssim  F^{\frac23}(2r) + \frac{1}{r^2} \(\int_{Q(2r)}|v|^3 dxdt\)^{\frac13}  \(\int_{Q(2r)} \(|v|^2 - [|v|^2]_{2r} \)^{\frac32} dxdt\)^{\frac23} + \nonumber\\
&\quad + \frac{1}{r^2} \(\int_{Q(2r)}|v|^3 dxdt\)^{\frac13}  \(\int_{Q(2r)} |p|^{\frac32} dxdt\)^{\frac23} \nonumber\\
&\lesssim  F^{\frac23}(2r) + \frac{F^{\frac13}(2r)}{r^{\frac43}}  \(\int_{-4r^2}^0 \(\int_{B(2r)} |v| |\nabla v| dx\)^{\frac32} dt\)^{\frac23} + \nonumber\\
&\quad + F^{\frac13}(2r) D^{\frac23}(2r) \nonumber\\
&\lesssim  F^{\frac23}(2r) + \frac{F^{\frac13}(2r)}{r^{\frac43}}  \|v\|^{\frac23}_{L_t^\infty L_x^2(Q(2r))} \|v\|^{\frac13}_{L_t^3 L_x^3(Q(2r))} \|\nabla v\|_{L_t^{\frac95}L_x^{\frac95}(Q(2r))} + \nonumber\\
&\quad + F^{\frac13}(2r) D^{\frac23}(2r) \nonumber\\
&\lesssim  F^{\frac23}(2r) + F^{\frac49}(2r)  A^{\frac13}(2r) \E^{\frac12}(2r) + F^{\frac13}(2r) D^{\frac23}(2r). \nonumber
\end{align}
\end{proof}

\begin{lemma} \label{lem-log-grow}
Let $v$ be a local suitable weak solution to the Navier-Stokes equations in $Q(1)$. For a point $w = (x , t) \in Q(1)$, suppose that
\begin{equation} \label{eq-log-grow}
\limsup_{r \to 0+} \frac{E(r,w)}{|\ln r|} \le 1,
\end{equation}
then there exists an absolute constant $C_1>0$ such that
\begin{equation} \label{eq-log-grow-2}
\limsup_{r \to 0+} \frac{(A + F + D)(r, w)}{|\ln r|^2} \le C_1.
\end{equation}
Similarly, suppose that for some $\mu>0$
\begin{equation} \label{eq-loglog-grow}
\limsup_{r \to 0+} \frac{E(r,w)}{(\ln |\ln r|)^\mu} \le 1,
\end{equation}
then
\begin{equation} \label{eq-loglog-grow-2}
\limsup_{r \to 0+} \frac{(A + F + D)(r, w)}{(\ln |\ln r|)^{2\mu}} \le C_1.
\end{equation}
\end{lemma}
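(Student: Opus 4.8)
The plan is to iterate the multiplicative estimates of Lemmas \ref{lem-multiplicative} and \ref{lem-energy-iter} and track how the logarithmic growth of $E$ propagates to the other scale-invariant quantities. First I would normalize: after translating so that $w = (0,0)$, the hypothesis \eqref{eq-log-grow} gives, for every $\eta > 0$, a radius $r_0 = r_0(\eta)$ such that $E(r) \le (1+\eta)|\ln r|$ for all $0 < r \le r_0$; since $\E(r) \lesssim E(r)$ by \eqref{eq-tildeE-E}, the same bound (up to an absolute constant) holds for $\E(r)$. The goal is to show $(A+F+D)(r) \lesssim |\ln r|^2$. I would set up a bootstrap on the dyadic sequence $r_k = 2^{-k} r_0$ and denote by $a_k = A(r_k)$, $f_k = F(r_k)$, $d_k = D(r_k)$ the values at these scales, with $\varepsilon_k := E(r_k) + \E(r_k) \lesssim k$ (plus an $O(\ln r_0^{-1})$ shift which is harmless).

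The key computation is to feed the choice $\rho = 2^{N} r = r_{k-N}$ into \eqref{eq-Fr} and \eqref{eq-Dr} for a suitably large but fixed integer $N$ (chosen so the "good" factors $(r/\rho)^3 = 2^{-3N}$ and $(r/\rho) = 2^{-N}$ beat the universal constants), and similarly $\rho = r_{k-1}$, $2r = r_{k-1}$ into Lemma \ref{lem-energy-iter}. Using the branches of the minima that involve $E$ and $\E$ — specifically $A^{1/2}E$ in \eqref{eq-Fr}, \eqref{eq-Dr} and $F^{1/3}A^{1/2}E^{1/2}$ in Lemma \ref{lem-energy-iter} — together with Young's inequality to absorb the self-referential $A$, $F$, $D$ terms, one obtains a recursion of the schematic form
\begin{equation*}
a_k + f_k + d_k \le \tfrac12 (a_{k-1} + f_{k-1} + d_{k-1}) + C\big(1 + \varepsilon_k^2\big) + C \sup_{j \le k}(a_j + f_j + d_j)^{1/2}\varepsilon_k^{1/2},
\end{equation*}
where the contraction factor $\tfrac12$ comes from the $2^{-N}$ gains and $\varepsilon_k \lesssim k \sim |\ln r_k|$. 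A standard induction then closes: if $(a_j+f_j+d_j) \le C_1 j^2$ for $j < k$ with $C_1$ large, the right side is $\le \tfrac12 C_1 (k-1)^2 + C(1+k^2) + C (C_1 k^2)^{1/2} k^{1/2} \le C_1 k^2$ provided $C_1$ is chosen large relative to the absolute constant $C$. Translating back from the dyadic sequence to all $r \in (0,r_0]$ via monotonicity-type comparisons between consecutive scales (which only costs a bounded factor) gives \eqref{eq-log-grow-2} with an absolute $C_1$. The second part, under \eqref{eq-loglog-grow}, is literally the same argument with $\varepsilon_k$ replaced by $\lesssim (\ln k)^\mu \sim (\ln|\ln r_k|)^\mu$ throughout; the recursion is identical in form and yields the exponent $2\mu$.

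The main obstacle I anticipate is not conceptual but bookkeeping: one must verify that the scale-invariant iteration genuinely closes with the \emph{same} absolute constant $C_1$ for both the single-log and double-log versions, which requires checking that the sublinear correction term $(\sup a_j)^{1/2}\varepsilon_k^{1/2}$ is indeed dominated by $\varepsilon_k^2$ (equivalently that $\varepsilon_k \to \infty$, which holds since $|\ln r_k| \to \infty$) — so the $\varepsilon_k^{1/2}$-term is absorbable and the leading behavior is exactly quadratic in $\varepsilon_k$. A secondary point requiring care is the choice of the fixed shift $N$: it must be selected before the induction, depending only on the absolute constants in Lemmas \ref{lem-multiplicative}–\ref{lem-energy-iter}, so that the geometric gain $2^{-3N}$ (resp. $2^{-N}$) produces a genuine contraction; this is where one uses that those lemmas' constants are absolute. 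I would also note that the $+\infty$ endpoint values $\G, \H$ play no role here — the statement is purely a local growth-rate propagation — so no appeal to global finiteness is needed beyond the qualitative fact that $(v,p)$ is a suitable weak solution near $w$.
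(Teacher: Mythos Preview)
Your overall strategy --- iterate the scale-invariant estimates with a fixed ratio to create a contraction, feed in the growth of $E$, and close by induction --- is exactly right and is what the paper does. But the recursion you write down cannot be derived from the branches you select, and this is not just bookkeeping. First, the energy estimate of Lemma~\ref{lem-energy-iter} bounds $A(r)$ by $F(2r)+D(2r)$ (plus errors) with an absolute constant that is $\gtrsim 1$, never $\le \tfrac12$; so you cannot obtain a contraction $S_k \le \tfrac12 S_{k-1} + \cdots$ on the full sum $S_k = a_k+f_k+d_k$. Second, your choice of the $F^{1/3}A^{1/2}E^{1/2}$ branch produces an error term of order $S^{5/6}\varepsilon^{1/2}$ (since $F^{1/3}A^{1/2} \le S^{5/6}$), not $S^{1/2}\varepsilon^{1/2}$; under the inductive hypothesis $S \sim C_1\varepsilon^2$ this scales like $C_1^{5/6}\varepsilon^{13/6}$, which grows faster than $\varepsilon^2$ and therefore does \emph{not} close the induction at the claimed quadratic rate.

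The fix is simple and is precisely the paper's route: in Lemma~\ref{lem-energy-iter} use the $F(2r)$ branch of the minimum (not the $E$-branch) to get the clean bound $A(r/2) \lesssim F(r)+D(r)+1$, then \emph{eliminate} $A$ and iterate on $F+D$ alone. Substituting into the $A^{1/2}E$ terms of Lemma~\ref{lem-multiplicative} and applying Young's inequality $|\ln r|\,X^{1/2} \le \tfrac14 X + C|\ln r|^2$ yields
\[
(F+D)(\theta r) \le \tfrac12(F+D)(r) + C|\ln r|^2,
\]
which iterates cleanly to $(F+D)(r) \lesssim |\ln r|^2$; the bound on $A$ then follows a posteriori from $A \lesssim F+D+1$. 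The double-logarithmic case is identical with $|\ln r|$ replaced by $(\ln|\ln r|)^\mu$.
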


\begin{proof}
Without loss of generality, we let $w = (0, 0)$. Consider the logarithmic case \eqref{eq-log-grow} first. Using Lemma \ref{lem-multiplicative}, for a small number $\theta>0$ there holds
\begin{align} \label{eq-F-FA}
F(\theta r) &\le \frac{1}{16} F\(\frac{r}{2}\) + C_\theta A^\frac12\(\frac{r}{2}\) E\(\frac{r}{2}\) \le  \frac14 F(r) + C |\ln r| A^\frac12\(\frac{r}{2}\),
\end{align}
and
\begin{align} \label{eq-D-DA}
D(\theta r) &\le \frac1{16} D\(\frac{r}{2}\) + C_\theta A^\frac12\(\frac{r}{2}\) E\(\frac{r}{2}\) \le \frac14 D(r) + C |\ln r| A^\frac12\(\frac{r}{2}\).
\end{align}
Using Lemma \ref{lem-energy-iter}, we have
\begin{equation} \label{eq-A-FD1}
A\(\frac{r}{2}\)  \le C (F(r)+D(r)+1).
\end{equation}
Combining \eqref{eq-F-FA}--\eqref{eq-A-FD1}, we arrive at
\begin{equation} \label{eq-FD-FDlog}
F(\theta r ) + D(\theta r) \le \frac12 (F(r) + D(r)) + C |\ln r|^2.
\end{equation}
An induction using \eqref{eq-FD-FDlog} gives, for sufficiently small $r$, 
$$F(r ) + D( r) \le C |\ln r|^2,$$
which, together with \eqref{eq-A-FD1},  implies \eqref{eq-log-grow-2}.

The proof for the double logarithmic case \eqref{eq-loglog-grow} $\implies$ \eqref{eq-loglog-grow-2} is similar.
\end{proof}

\subsection{Covering lemmas}

We recall the following version of Vitali covering lemma for parabolic cylinders, see, \emph{e.g.}, \cite[Lemma 6.1]{CKN}.

\begin{lemma} \label{lem-vitali}
Let $\mathfrak{I}$ be  any collection of parabolic cylinders $\{Q^+(r, w)\}$ contained in a bounded subset of $\R_x^n \times \R_t$. Then there exists a finite or countable subset $\mathfrak{I}' \subset \mathfrak{I}$ such that the cylinders in $\mathfrak{I}'$ are mutually disjoint and 
$$\bigcup\limits_{Q^+(r,w) \in \mathfrak{I}} {Q^+(r,w)} \subset \bigcup\limits_{Q^+(r,w) \in \mathfrak{I}'} Q^+(5r, w).$$
\end{lemma}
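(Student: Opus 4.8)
The plan is to run the classical greedy argument behind the Vitali covering lemma, sorting the cylinders by size into dyadic bands and repeatedly extracting maximal pairwise disjoint subfamilies.

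First, since every $Q^+(r,w) \in \mathfrak{I}$ is contained in a fixed bounded set, the radii are bounded above by some $R>0$, and cylinders of radius $0$ are empty and may be discarded. For $j = 1, 2, \ldots$ set $\mathfrak{I}_j = \{Q^+(r,w) \in \mathfrak{I} : R 2^{-j} < r \le R 2^{1-j}\}$, so that $\mathfrak{I} = \bigcup_{j \ge 1} \mathfrak{I}_j$. Using Zorn's lemma, choose a maximal subfamily $\mathfrak{I}'_1 \subset \mathfrak{I}_1$ of pairwise disjoint cylinders. Inductively, having chosen $\mathfrak{I}'_1, \ldots, \mathfrak{I}'_{j-1}$, let $\mathfrak{I}'_j$ be a maximal pairwise disjoint subfamily of $\{Q \in \mathfrak{I}_j : Q \cap Q' = \emptyset \text{ for all } Q' \in \mathfrak{I}'_1 \cup \cdots \cup \mathfrak{I}'_{j-1}\}$, and put $\mathfrak{I}' = \bigcup_{j \ge 1} \mathfrak{I}'_j$. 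By construction the members of $\mathfrak{I}'$ are mutually disjoint (within a band by the maximality step, across bands by the constraint imposed at each stage), and since they are disjoint subsets of a bounded set, each of positive Lebesgue measure, $\mathfrak{I}'$ is finite or countable.

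It remains to establish the covering property, for which the key elementary fact is: if $Q^+(r,w)$ and $Q^+(r',w')$ intersect and $r \le 2r'$, then $Q^+(r,w) \subset Q^+(5r',w')$. Indeed, writing $w = (x,t)$, $w' = (x',t')$ and picking $(y,s)$ in the intersection, for any $(z,\tau) \in Q^+(r,w)$ one has $|z - x'| \le |z-x| + |x-y| + |y-x'| < 2r + r' \le 5r'$ and $|\tau - t'| \le |\tau-t| + |t-s| + |s-t'| < 2r^2 + r'^2 \le 9r'^2 < (5r')^2$, so $(z,\tau) \in Q^+(5r',w')$. Now take any $Q^+(r,w) \in \mathfrak{I}$, say $Q^+(r,w) \in \mathfrak{I}_j$. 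By maximality of $\mathfrak{I}'_j$, the cylinder $Q^+(r,w)$ must intersect some $Q^+(r',w') \in \mathfrak{I}'_1 \cup \cdots \cup \mathfrak{I}'_j$ --- either it already meets a previously selected cylinder, or else it is a legitimate competitor for $\mathfrak{I}'_j$ and hence, by maximality, meets a member of $\mathfrak{I}'_j$ (possibly itself). Any such $Q^+(r',w')$ lies in some $\mathfrak{I}_k$ with $k \le j$, so $r' > R 2^{-k} \ge R 2^{-j} \ge r/2$, i.e.\ $r \le 2r'$; the geometric fact then yields $Q^+(r,w) \subset Q^+(5r',w')$. Taking the union over all $Q^+(r,w) \in \mathfrak{I}$ gives the claimed inclusion.

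I do not expect a genuine obstacle: this is the standard Vitali argument. The only points needing care are verifying the geometric inclusion with the correct constant $5$ (in space; the time inclusion then holds with room to spare) and arranging the dyadic banding so that every unselected cylinder is guaranteed to meet a selected one of at least half its radius.
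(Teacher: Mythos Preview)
Your proof is correct and follows the standard greedy dyadic-band argument for Vitali's lemma. The paper itself does not prove this lemma but simply cites it as a known result (\cite[Lemma 6.1]{CKN}), so there is nothing to compare against; your write-up supplies exactly the expected details, with the geometric inclusion verified cleanly for the parabolic metric.
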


The following simple variation is useful in Section \ref{sec-3}.

\begin{lemma} \label{cor-vitali}
Let $\mathfrak{I}$ be  any collection of parabolic cylinders $\{Q^+(r, w)\}$ contained in a bounded subset of $\R^n_x \times \R_t$. Then there exists a finite or countable subset $\mathfrak{I}' \subset \mathfrak{I}$ such that the intervals $(x_3(w)-r, x_3(w)+r)$ for $Q^+(r,w) \in \mathfrak{I}'$ are mutually disjoint and 
$$\bigcup\limits_{Q^+(r,w) \in \mathfrak{I}} (x_3(w) - r, x_3(w) + r) \subset \bigcup\limits_{Q^+(r,w) \in \mathfrak{I}'}(x_3 - 5r, x_3 + 5r).$$
In particular, the parabolic cylinders in $\mathfrak{I}'$ are also mutually disjoint.
\end{lemma}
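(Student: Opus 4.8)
The plan is to deduce Lemma \ref{cor-vitali} from the standard parabolic Vitali covering lemma (Lemma \ref{lem-vitali}), but with the selection of the subfamily driven by the sizes of the \emph{one-dimensional} $x_3$-projections of the cylinders rather than by the full spacetime geometry. Since the cylinders $Q^+(r,w)$ project onto the open intervals $(x_3(w)-r, x_3(w)+r)$ of length $2r$, and these intervals all lie in a bounded subset of $\R$, one is really proving the classical one-dimensional Vitali covering lemma for the interval family $\{(x_3(w)-r, x_3(w)+r)\}$, with the extra bookkeeping that to each interval we have attached an additional label $w$ (and the radius $r$ already determines the interval length). The second, easier claim that the full cylinders in $\mathfrak{I}'$ are then mutually disjoint is immediate: if two cylinders $Q^+(r_1,w_1)$ and $Q^+(r_2,w_2)$ overlap in $\R^3\times\R$, then in particular their $x_3$-projections overlap, contradicting disjointness of the selected intervals.

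First I would reduce to the one-dimensional statement. Given $\mathfrak{I}$, form the auxiliary collection $\mathfrak{J} = \{(x_3(w)-r,\,x_3(w)+r) : Q^+(r,w)\in\mathfrak{I}\}$ of open intervals in $\R$ (keeping track, for each interval, of which cylinder it came from). All these intervals lie in some bounded interval, so their lengths are uniformly bounded, say by $2R_0$. I would then run the usual greedy/exhaustion argument: set $R_0 = \sup\{r : Q^+(r,w)\in\mathfrak{I}\}$, split the scales into dyadic bands $R_0/2^{k+1} < r \le R_0/2^k$ for $k=0,1,2,\dots$, and within each band successively pick intervals that are disjoint from all previously chosen ones, exhausting the band before moving to the next. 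This produces a finite-or-countable subfamily $\mathfrak{I}'\subset\mathfrak{I}$ (identifying intervals with their cylinders) whose $x_3$-projection intervals are pairwise disjoint.

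The key covering estimate is the standard one: if $Q^+(r,w)\in\mathfrak{I}$ was not selected, then when its dyadic band was processed there must have been a selected interval $(x_3(w')-r',x_3(w')+r')$ with $r' > r/2$ (i.e. in the same or an earlier band) meeting $(x_3(w)-r,x_3(w)+r)$; a direct triangle-inequality computation then shows $(x_3(w)-r,x_3(w)+r)\subset (x_3(w')-5r', x_3(w')+5r')$, since $|x_3(w)-x_3(w')| < r+r' < 3r'$ and $r < 2r'$, so every point of the first interval is within $3r' + 2r' = 5r'$ of $x_3(w')$. (If $Q^+(r,w)$ itself was selected, it is trivially contained in the $5r$-enlargement of itself.) Taking the union over all cylinders in $\mathfrak{I}$ gives the claimed inclusion $\bigcup_{Q^+(r,w)\in\mathfrak{I}}(x_3(w)-r,x_3(w)+r) \subset \bigcup_{Q^+(r,w)\in\mathfrak{I}'}(x_3-5r,x_3+5r)$.

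I do not expect a genuine obstacle here — this is a routine adaptation of Vitali-type covering arguments, and in fact one could even cite Lemma \ref{lem-vitali} as a black box by composing it with the projection if one were willing to enlarge cylinders in all directions, but the point of the present lemma is precisely that enlargement happens \emph{only} in the $x_3$-direction, which is why the greedy argument should be carried out directly on the intervals. The only mild care needed is (i) handling the fact that the dyadic exhaustion within a band may itself require a transfinite/Zorn or maximal-disjoint-subfamily argument if the band is uncountable, which is dispatched exactly as in the proof of Lemma \ref{lem-vitali}; and (ii) noting that since each interval length is bounded, $R_0<\infty$, so the dyadic decomposition into bands indexed by $k\ge 0$ is legitimate. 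The final sentence — disjointness of the cylinders in $\mathfrak{I}'$ — follows for free from disjointness of their $x_3$-projections, as noted above.
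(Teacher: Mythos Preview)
Your proposal is correct and follows essentially the same approach as the paper: apply the one-dimensional Vitali covering lemma to the family of $x_3$-projection intervals $\{(x_3(w)-r,\,x_3(w)+r)\}$, and then observe that disjointness of the selected intervals forces disjointness of the corresponding cylinders. The paper's proof is in fact just this one sentence, whereas you additionally spell out the standard greedy/dyadic-band proof of the one-dimensional Vitali lemma; this extra detail is fine but unnecessary.
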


\begin{proof}
This is a consequence of the usual Vitali covering lemma applied to the collection of intervals $\{(x_3(w) - r, x_3(w) + r): Q^+(r,w) \in \mathfrak{I}\}$.
\end{proof}

\section{Improved partial regularity} \label{sec-2}

\begin{lemma} \label{lem-criterion}
There exist positive absolute constants $\e_1, c_1, \beta$ such that the following holds. Let $v$ be a local suitable weak solution to the Navier-Stokes equations in $Q(1)$ with $\G = \G[v,p] = \int_{Q(1)} \(|v|^3 + |p|^{\frac32} \)dxdt < + \infty$ and let $\sigma = \sigma(\G) = c_1 (\G+1)^{-\beta}$. Suppose that for all $ r \in (\sigma, 1)$, we have
\begin{equation} \label{eq-tildeE-assumption}
\E(r) \le \e_1,
\end{equation}
then $v$ is regular in $\overline{Q(\sigma)}$ with the estimates
\begin{equation}
\|\nabla_x^m v\|_{L^\infty(\overline{Q(\sigma)})} \lesssim_m \sigma^{-(m+1)}, \quad m = 0,1,2,\cdots.
\end{equation}
\end{lemma}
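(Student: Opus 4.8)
The plan is to reduce the statement, via parabolic scaling, to one application of the $\e$-regularity criterion (Theorem \ref{thm-ereg}). Indeed, suppose we manage to produce a single scale $\sigma$ at which $F(\sigma)+D(\sigma)\le\e_*$. Then the rescaled pair
$v_\sigma(y,s)=\sigma v(\sigma y,\sigma^2 s)$, $p_\sigma(y,s)=\sigma^2 p(\sigma y,\sigma^2 s)$
is, by scale-invariance of the equations and of the local energy inequality, a local suitable weak solution in $Q(1)$, and a change of variables gives $\G[v_\sigma,p_\sigma]=F(\sigma)+D(\sigma)\le\e_*$. Theorem \ref{thm-ereg} then yields $\|\nabla_x^m v_\sigma\|_{L^\infty(Q(1/2))}\lesssim_m 1$, which scales back to $\|\nabla_x^m v\|_{L^\infty(\overline{Q(\sigma/2)})}\lesssim_m\sigma^{-(m+1)}$; after relabelling $\sigma$ (a harmless factor $2$, which only adjusts $c_1$) this is precisely the asserted conclusion. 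So everything reduces to reaching $F+D\le\e_*$ at some scale $\sigma\sim(\G+1)^{-\beta}$, and this I would do by iterating the scale-invariant quantities down from a fixed scale, feeding in the smallness of $\E$.

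Introduce $\Phi(r)=A(r)+F(r)+D(r)$. First a crude starting bound: since $\int_{Q(r)}(|v|^3+|p|^{3/2})$ is nondecreasing in $r$ one has $F(1/4),D(1/4)\lesssim\G$, and Lemma \ref{lem-energy-iter} at $r=1/4$ (using the branch $\min\{\cdots\}\le F(2r)$ in its right-hand side) then gives $A(1/4)\lesssim\G+1$, so $\Phi(1/4)\le C_0(\G+1)$. For the decay step, fix a small ratio $\theta\in(0,1/2)$ and combine: Lemma \ref{lem-multiplicative} on the pairs $(\theta\rho,\rho)$ and $(2\theta\rho,\rho)$, with the branches $F(\cdot)\lesssim(\cdot)^3F+(\cdot)^{-2}A^{3/5}\E^{9/10}$ and $D(\cdot)\lesssim(\cdot)D+(\cdot)^{-2}A^{3/5}\E^{9/10}$, together with Lemma \ref{lem-energy-iter} at $r=\theta\rho$ (branch $\min\{\cdots\}\le F(2\theta\rho)$) to estimate $A(\theta\rho)$; all the scales $s$ that appear lie in $[\theta\rho,\rho]$, where we invoke the hypothesis $\E(s)\le\e_1$. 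This produces a recursion of the schematic form
\begin{equation*}
\Phi(\theta\rho)\le C\theta\,\Phi(\rho)+C\theta^{2/3}\Phi^{2/3}(\rho)+C\theta^{-2}\e_1^{9/10}\Phi^{3/5}(\rho)+C\theta^{-4/3}\e_1^{3/5}\Phi^{2/5}(\rho).
\end{equation*}
The decisive feature is that every exponent other than the leading one is strictly below $1$, and the prefactor of each ``error'' term can be made as small as we wish by first choosing $\theta$ small and then $\e_1$ small (all absolute constants). Hence, for suitable absolute $\theta,\e_1$ one obtains $\Phi(\theta\rho)\le\tfrac12\Phi(\rho)$ whenever $\Phi(\rho)\ge 1$, and $\Phi(\theta\rho)\le\tfrac18\Phi(\rho)+\tfrac14\e_*$ whenever $\Phi(\rho)\le 1$.

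Iterating $\rho\mapsto\theta\rho$ starting from $\rho=1/4$: the first inequality forces geometric decay of $\Phi$ until it drops below $1$, which takes at most $k^*\lesssim\log(\G+2)$ steps; then $O(1)$ further applications of the second inequality push $\Phi$, hence $F+D$, below $\e_*$. Since the total number of steps is $O(\log(\G+2))$, the terminal scale is bounded below by an absolute power of $\theta$ raised to $O(\log(\G+2))$, i.e.\ by $\sim(\G+2)^{-C\log(1/\theta)}$; choosing $\beta=C\log(1/\theta)$ and $c_1$ small (both absolute) makes $\sigma=\sigma(\G)=c_1(\G+1)^{-\beta}$ lie below every scale touched by the iteration, so that only the assumed smallness of $\E$ on $(\sigma,1)$ is ever used, and at some scale $\gtrsim\sigma$ one has $F+D\le\e_*$. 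One then concludes by the scaling argument of the first paragraph. (The bookkeeping point is simply that $k^*$ is bounded a priori by $\log_2(C_0(\G+1))+O(1)$, so $\beta$ and $c_1$ can be fixed before running the iteration.)

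The step I expect to be the main obstacle is exactly the structural one behind the two contraction regimes: an iteration built on the energy inequalities \emph{alone} generically produces a recursion whose fixed point is strictly positive and of \emph{absolute} size, so $F+D$ would plateau at some $O(1)$ value instead of descending below the small threshold $\e_*$. Two things rescue it here. First, the hypothesis supplies genuine smallness of $\E$. Second — and this is essential — in Lemmas \ref{lem-multiplicative} and \ref{lem-energy-iter} the dissipation quantity $E$, which we have no way to make small (only $\E\le CE$, the wrong direction), can everywhere be traded for $\E$; it is precisely this substitution that makes every error prefactor in the recursion small rather than merely bounded. Beyond that, the remaining work is routine: verifying the exponents in the schematic recursion, treating the regimes $\Phi\ge 1$ and $\Phi\le 1$ separately when reading off the contraction, and checking the step count so that $\sigma$ degrades only polynomially in $\G$.
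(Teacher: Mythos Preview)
Your proposal is correct and follows essentially the same route as the paper: iterate the scale-invariant quantities via Lemmas~\ref{lem-multiplicative} and~\ref{lem-energy-iter}, using the assumed smallness of $\E$ in place of $E$, until after $O(\log(\G+2))$ steps one lands below the $\e$-regularity threshold and applies Theorem~\ref{thm-ereg} by rescaling. The only packaging difference is that the paper iterates $A+E+D^{4/3}$ and, by using the $\E$-branch of Lemma~\ref{lem-energy-iter} (rather than the $F$-branch you use for $A$), obtains a single clean affine recursion $(A+E+D^{4/3})(\theta r)\le\tfrac12(A+D^{4/3})(r)+\tfrac1{10}\e_0$ directly, avoiding your split into the regimes $\Phi\ge1$ and $\Phi\le1$; both arguments give the same polynomial dependence $\sigma\sim(\G+1)^{-\beta}$.
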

\begin{proof}
By \eqref{eq-Fr}, there exists an absolute constant $\e_0 > 0$ such that if 
\begin{equation}
A(r) + E(r) + D^{\frac43}(r) \le \e_0
\end{equation}
then 
\begin{equation}
F(r) + D(r) \le \e_*.
\end{equation}
The latter implies, via Theorem \ref{thm-ereg} and rescaling, that $v$ is regular in $\overline{Q\left(\frac{r}{2}\right)}$ with the estimates
\begin{equation}
 \|\nabla_x^m v\|_{L^\infty\(\overline{Q\left(\frac{r}{2}\right)}\)} \lesssim_m r^{-(m+1)}, \quad m =0,1,2,\cdots.
\end{equation}

Using Lemma \ref{lem-energy-iter}  and the assumption \eqref{eq-tildeE-assumption}, we have, for $0<\theta < \frac14$ and $2\theta^{-1} \sigma \le r \le 1$,
\begin{align}
A(\theta r) + E(\theta r) &\lesssim F^{\frac23}(2\theta r) + F^{\frac49}(2\theta r)  A^{\frac13}(2\theta r) \e_1^{\frac12} + F^{\frac13}(2\theta r) D^{\frac23}(2\theta r) \nonumber \\
&\lesssim F^{\frac23}(2\theta r) + \e_1^{\frac32} A(2\theta r) + D^{\frac43}(2\theta r).
\end{align}
Using Lemma  \ref{lem-multiplicative} and the assumption \eqref{eq-tildeE-assumption}, we have
\begin{equation}
F^{\frac23}(2\theta r) \lesssim \theta^2 A(r) + \theta^{-\frac43} A^{\frac25}(r) \e_1^{\frac35},
\end{equation}
and
\begin{equation}
D^{\frac43}(\theta r) + D^{\frac43}(2\theta r) \lesssim \theta^{4/3} D^{\frac43}(r) + \theta^{-\frac83} A^{\frac45}(r) \e_1^{\frac65}.
\end{equation}
By definition of $A$, it is clear that
\begin{equation}
A(2\theta r) \le \frac{1}{2\theta} A(r).
\end{equation}
Hence, by choosing the absolute numbers $\theta, \e_1$ to be sufficiently small, we have 
\begin{equation} \label{eq-for-iter}
A(\theta r) + E(\theta r) + D^{\frac43}(\theta r) \le \frac{1}{2} \Big( A( r) + D^{\frac43}( r) \Big) + \frac{1}{10} \e_0.
\end{equation}
An iteration using \eqref{eq-for-iter} gives
\begin{equation} \label{eq-iter-result}
\(A + E + D^\frac43\)\(\frac{\theta^k}{2}\)  \le \frac{1}{2^k} \( A +  D^{\frac43}\)\(\frac12\)  + \frac{1}{5} \e_0.
\end{equation}
for $1 \le k \le k_0 \stackrel{\Delta}= \left\lfloor \frac{\ln (4\sigma )}{\ln \theta}\right\rfloor$. By the local energy inequality \eqref{eq-local-energy}  and the definition of $\G$, we deduce that
\begin{equation}  \label{eq-energies-G}
 \int_{Q(\frac{9}{10})} |\nabla v|^2 dxdt  + \esssup_{-\frac{81}{100}<t<0} \int_{B(\frac{9}{10})} |v|^2 dx \lesssim \G + \G^{\frac23} \lesssim \G+1,
\end{equation}
and consequently,
\begin{equation}
A\(\frac12\) + D^{\frac43}\(\frac12\) \lesssim  (\G+1)^{\frac43}.
\end{equation}
Taking $k=k_0$ in \eqref{eq-iter-result}, we have
\begin{align} \label{eq-iter-result-2}
\(A + E + D^\frac43\)\(\frac{\theta^{k_0}}{2}\) &\le 2^{-\frac{\ln(4\sigma)}{\ln \theta}+1} \( A + D^{\frac43} \) \(\frac12\) + \frac{1}{5} \e_0 \nonumber \\
& \le C_3 c_1^{-\frac{\ln 2}{\ln \theta}} (\G + 1)^{\frac{\beta \ln 2}{\ln \theta} + \frac43}  +   \frac{1}{5} \e_0 \nonumber \\
& \le \frac{1}{5} \e_0 + \frac{1}{5} \e_0 < \e_0.
\end{align}
For the last line, we have chosen $c_1$ small and $\beta$ large, so that
$$5 C_3 c_1^{-\frac{\ln 2}{\ln \theta}} < \e_0, \quad \frac{\beta \ln 2}{\ln \theta} + \frac43 \le 0. $$
With the remark at the beginning of the proof, we deduce that 
\begin{equation} \label{eq-227}
 \|\nabla_x^m v\|_{L^{\infty}\(\overline{Q(\theta^{k_0}/4)}\)} \lesssim_m (\theta^{k_0})^{-m-1}, \quad m \ge 0.
\end{equation}
The conclusion of the lemma follows from \eqref{eq-227}, since $\theta^{k_0} \ge 4\sigma$.
\end{proof}

%

Now, we are ready to give

\begin{proof}[Proof of Theorem \ref{thm-partial-reg}]

Consider the disjoint sets
\begin{equation} \label{eq-S1def}
\mathcal{S}_{1} = \left\{ w \in \mathcal{S}: \limsup_{r \to 0+} \frac{E^+(r, w)}{|\ln r|} \le 1 \right\} 
\end{equation}
and
\begin{equation} \label{eq-S2def}
\mathcal{S}_{2} = \left\{ w \in \mathcal{S}: \limsup_{r \to 0+} \frac{E^+(r, w)}{|\ln r|} > 1 \right\}.
\end{equation}
Clearly, $\mathcal{S} = \mathcal{S}_1 \cup \mathcal{S}_2$. By Lemma \ref{lem-log-grow}, we have $\mathcal{S}_1 = \bigcup\limits_{m\ge 1}\mathcal{S}_{1m}$,
\begin{equation} \label{eq-S1mdef}
\mathcal{S}_{1m} = \left\{ w \in \mathcal{S}_1: \frac{E^+(r,w)}{|\ln r|} \le 2, \  \frac{(A + F +  D)(r,w)}{|\ln r|^2} \le 2C_1 \ \mbox{for all} \  0<r \le m^{-1} \right\},
\end{equation}
where $C_1$ is the constant from Lemma \ref{lem-log-grow}.

Using the standard covering argument, one can deduce that $\mathcal{P}^f(\mathcal{S}_2) = 0$. Indeed, for any $0<\delta <\frac{1}{10}$ and $w \in \mathcal{S}_2$, there exists $0<r_\delta(w)<\delta$ such that  $ Q(r_\delta(w),w) \subset Q(1)$ and $E^+(r_\delta(w),w) > |\ln r_\delta(w)|$. By Lemma \ref{lem-vitali}, there exists a finite or countable set $\mathcal{T} \subset \mathcal{S}_2$ such that $Q^+(r_\delta(w), w)$ are mutually disjoint for $w \in \mathcal{T}$, and
$$\bigcup\limits_{w \in \mathcal{S}_2} Q^+(r_\delta(w),w) \subset \bigcup\limits_{w \in \mathcal{T}} Q^+(5r_\delta(w), w).$$
Consequently, for $f(r) = r |\ln r|$,
\begin{align} \label{eq-s2-measure}
\mathcal{P}^f(\mathcal{S}_2) \le \mathcal{P}^f_\delta (\mathcal{S}_2) &\lesssim \sum_{w \in \mathcal{T}} (5r_\delta(w))|\ln (5r_\delta(w))| \nonumber\\
&\lesssim \sum_{w \in \mathcal{T}} r_\delta(w)|\ln r_\delta(w)| \nonumber\\
& < \sum_{w \in \mathcal{T}} \int_{Q^+(r_\delta(w),w) \cap Q(1)} |\nabla w|^2 dxdt \nonumber\\
& =  \int_{\bigcup\limits_{w \in \mathcal{T}} Q^+(r_\delta(w), w) \cap Q(1)} |\nabla w|^2 dxdt.
\end{align}
The spacetime  volume of the set $\bigcup\limits_{w \in \mathcal{T}} Q^+(r_\delta(w), w) \cap Q(1)$ is bounded by
$$C \sum_{w\in \T} r_\delta(w)^5 \le C \delta^4 \sum_{w\in \T} r_\delta(w) \le C\delta^4 \int_{Q(1)} |\nabla w|^2 dxdt,$$
which tends to 0 as $\delta \to 0$. Hence, by the absolute continuity of integrals, the last line of \eqref{eq-s2-measure} tends to 0 as $\delta \to 0$. This proves $\mathcal{P}^f(\mathcal{S}_2) = 0$.

Now, it suffices to prove that $\mathcal{P}^f(\mathcal{S}_{1m}) = 0$ for each $m = 1,2,3, \cdots$. Let $m$ be fixed. We denote $\SS = \S_{1m} \cap \overline{Q(\frac12)}$ and $R_k = k^{-\gamma k}$, for $k \ge 1$, where $\gamma \ge 2$ is an absolute constant to be determined through the proof. Certainly, there exists $k_0$ such that $R_{k_0} < \frac{1}{100m}$.  For any $w \in  \SS$ and $k \ge k_0$, by Lemma \ref{lem-criterion} (with rescaling and translation of the domain), there exists a number $r_k(w)$ satisfying $ c |\ln R_k|^{-2\beta} R_k \le r_k(w) \le R_k$ where $c$ is an absolute positive constant, and
\begin{equation} \label{eq-tildeE-lower}
\E(r_k(w),w) > \e_1.
\end{equation}
For simplicity, we write $Q_k(w) = Q(r_k(w), w)$, $Q_k^+(w) = Q^+(r_k(w),w)$ and $\lambda Q_k^+(w) = Q^+(\lambda r_k(w), w)$. Clearly, $\frac{R_{k+1}}{R_k} \le k^{-\gamma}$, and by choosing $\gamma$ large we have $c |\ln R_k|^{-2\beta} R_k > R_{k+1}$. By Lemma \ref{lem-vitali}, there exists a finite or countable subset $ \T_k \subset \SS $ such that the spacetime cylinders $Q_k^+(w)$ are mutually disjoint for $w \in \T_k$, and
\begin{equation} \label{eq-covering}
\SS \subset \bigcup\limits_{w \in \SS} Q_k^+(w) \subset \bigcup\limits_{w \in \T_k} 5Q_k^+(w).
\end{equation}
By \eqref{eq-tildeE-lower} and \eqref{eq-tildeE-E}, 
\begin{equation} \label{eq-E-lower}
E(r_k(w), w) \gtrsim \e_1. 
\end{equation}
Denote $U_{k} = \bigcup\limits_{w \in \T_k} Q_k(w) \subset Q(\frac23)$, then  for any $w\in \T_k$, writing 
$$\Lambda(k,w) = \{w' \in \T_{k+1}: Q_{k+1}(w') \subset 2Q_k^+(w)\},$$ 
we have
\begin{align} \label{eq-0236-2}
\left|U_{k+1} \cap Q_k(w) \right| &\lesssim \sum_{w'\in \Lambda(k,w)} r_{k+1}^5(w') \nonumber \\
&\le R_{k+1}^4 \sum_{w'\in \Lambda(k,w)} r_{k+1}(w') \nonumber \\
&\stackrel{\eqref{eq-E-lower}}\lesssim  R_{k+1}^4 \sum_{w'\in \Lambda(k,w)} \int_{Q_{k+1}(w')} |\nabla v|^2 dxdt  \nonumber \\
&\le R_{k+1}^4 \int_{2Q_k^+(w) \cap Q(1)} |\nabla v|^2 dxdt \nonumber \\
&\stackrel{\eqref{eq-S1mdef}}\lesssim  R_{k+1}^4 r_k(w) |\ln r_k(w)|.
\end{align}
We let
\begin{equation} \label{eq-EEk-def1}
\EE_k = \int_{U_k \setminus U_{k+1}}  |\nabla v|^2 dxdt,
\end{equation}
then clearly 
\begin{equation} \label{eq-nonover1}
\sum_{k \ge 1} \EE_k \le \int_{U_1} |\nabla v|^2 dxdt < + \infty.
\end{equation}
Hence, there exists a sequence of $k_j \to + \infty$ such that
\begin{equation} \label{eq-U-V-small} 
\EE_{k_j} = o\Big(\frac{1}{k_j \ln k_j}\Big).
\end{equation}
From \eqref{eq-tildeE-lower}, using H\"older's inequality we have
\begin{align} \label{eq-erk-twoparts}
&\quad \ \e_1 r_k(w)   \nonumber \\
&< \frac{1}{r_k(w)^{\frac59}} \left( \int_{Q_k(w)} |\nabla v|^{\frac95} dx dt \right)^{\frac{10}{9}} \nonumber \\
&= \frac{1}{r_k(w)^{\frac59}} \left( \int_{Q_k(w) \cap U_{k+1}} |\nabla v|^{\frac95} dx dt + \int_{Q_k(w) \setminus U_{k+1}} |\nabla v|^{\frac95} dx dt \right)^{\frac{10}{9}} \nonumber \\
&\le \frac{2|U_{k+1} \cap Q_k(w)|^{\frac19}}{r_k(w)^{\frac59}} \int_{Q_k(w) \cap U_{k+1}} |\nabla v|^2 dxdt  \nonumber \\
&\quad + \frac{2|Q_k(w)|^{\frac19}}{r_k(w)^{\frac59}} \int_{Q_k(w) \setminus U_{k+1}} |\nabla v|^2 dxdt \nonumber \\
&\le C  \(\frac{R_{k+1}}{r_k(w)}\)^\frac49  |\ln r_k(w)|^\frac19 \int_{Q_k(w)} |\nabla v|^2 dxdt + C \int_{Q_k(w) \setminus U_{k+1}} |\nabla v|^2 dxdt.
\end{align}
Note that, by choosing $\gamma$ sufficiently large, we can ensure that, as $k \to +\infty$,
\begin{align}
 \(\frac{R_{k+1}}{r_k(w)}\)^\frac49  |\ln r_k(w)|^\frac19 &\le C \(\frac{R_{k+1}}{R_k}\)^\frac49 |\ln R_k|^{\frac{1+8\beta}{9}} \nonumber \\
&\le C k^{-\frac{4\gamma}{9}} (k\ln k)^{\frac{1+8\beta}{9}}= o\(\frac{1}{k^2}\).
\end{align}
Hence, we add up the estimate \eqref{eq-erk-twoparts} for all $w \in \T_k$ and take $k = k_j$ to get 
\begin{align} 
\sum_{w\in \T_{k_j}} r_{k_j}(w) &\lesssim   o\(\frac{1}{k_j^2}\) \int_{U_{k_j}} |\nabla v|^2 dxdt +   \int_{U_{k_j} \setminus U_{{k_j}+1}} |\nabla v|^2 dxdt \stackrel{\eqref{eq-U-V-small}}= o\(\frac{1}{{k_j} \ln {k_j}}\).
\end{align}
 Note that $k \ln k \sim |\ln R_k| \sim |\ln R_{k+1}| \sim |\ln r_k(w)|$. ($A\sim B$ means $A\le CB$ and $B \le CA$.) Hence, as $j \to +\infty$,
\begin{equation}
\sum_{w\in \T_{k_j}} r_{k_j}(w) |\ln r_{k_j}(w)| = o(1).
\end{equation}
In view of \eqref{eq-covering} and the definition of the $f$-Hausdorff measure, we deduce $\mathcal{P}^f (\SS) = 0$ for $f(r) = r |\ln r|$. 

Using rescaling and translation of the domain, it is then easy to deduce $\mathcal{P}^f (\mathcal{S}_{1m} \cap \overline{Q(r)}) = 0$ for any $r<1$. Consequently, $\mathcal{P}^f(\S_{1m}) = 0$ for $m = 1,2,3,\cdots$, and $\mathcal{P}^f(\S_{1}) = 0$. The conclusion of the theorem now follows readily.
\end{proof}

\section{Interval and epoch of regularity} \label{sec-3}

\begin{lemma} \label{lem-criterion-2}
Let $\e_1$ and $\beta$ be the constants from Lemma \ref{lem-criterion}. There exists a positive absolute constant $c_2$ such that the following holds. Let $v$ be a local suitable weak solution to the Navier-Stokes equations in $Q(1)$ with $\G = \int_{Q(1)} \(|v|^3 + |p|^{\frac32} \)dxdt < + \infty$ and
$\H_1 = \int_{Q(\frac23)} |\nabla v|^2  dxdt$. Let $\sigma = \sigma(\G) = c_2 (\G + 1)^{-2\beta}$. Given any $w \in \overline{Q(\frac12)}$ and any $R \in (0, \frac{1}{10})$, if for all $r \in (R, \frac{1}{10})$, 
\begin{equation} 
E(r,w) \le 100\H_1,
\end{equation}
and for all $r \in (\sigma R, R)$, 
\begin{equation} 
\E(r,w) \le \e_1,
\end{equation}
then $v$ is regular in $\overline{Q(\sigma R,w)}$ with the estimates
\begin{equation}
\|\nabla_x^m v\|_{L^\infty\(\overline{Q(\sigma R,w)}\)} \lesssim_m (\sigma R)^{-(m+1)}, \quad m = 0,1,2,\cdots.
\end{equation}
\end{lemma}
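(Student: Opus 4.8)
The plan is to reduce the statement to Lemma~\ref{lem-criterion} by rescaling to unit size around $w$ at the intermediate scale $R$, the only real work being to control the $\G$-functional of the rescaled solution by a power of $\G$ alone. Write $w=(x_0,t_0)$ and set
$$\tilde v(y,s)=R\,v(x_0+Ry,\,t_0+R^2 s),\qquad \tilde p(y,s)=R^2 p(x_0+Ry,\,t_0+R^2 s),\qquad (y,s)\in Q(1).$$
Since $w\in\overline{Q(1/2)}$ and $R<1/10$ we have $Q(R,w)\subset Q(1)$, so $(\tilde v,\tilde p)$ is a local suitable weak solution in $Q(1)$. By scale-invariance, $\E[\tilde v](\tilde r)=\E(R\tilde r,w)$ for $0<\tilde r<1$, and, crucially, $\G[\tilde v,\tilde p]=F(R,w)+D(R,w)$. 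Thus the hypothesis $\E(r,w)\le\e_1$ on $(\sigma R,R)$ is exactly $\E[\tilde v]\le\e_1$ on $(\sigma,1)$, and if I can show $\tilde\G:=F(R,w)+D(R,w)\lesssim(\G+1)^2$, then $c_1(\tilde\G+1)^{-\beta}\gtrsim(\G+1)^{-2\beta}$, so that, taking $c_2$ small, $\sigma=c_2(\G+1)^{-2\beta}$ is dominated by the radius $c_1(\tilde\G+1)^{-\beta}$ supplied by Lemma~\ref{lem-criterion}.

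To bound $\tilde\G$ I would run the iteration of scale-invariant quantities downward from scale $1/10$ to scale $R$, mimicking the proof of Lemma~\ref{lem-log-grow} but in the ``bounded'' regime, where the role of the growing bound $|\ln r|$ on $E$ is played by the fixed bound $100\H_1$. Combining the multiplicative estimates \eqref{eq-Fr}, \eqref{eq-Dr} (with a small absolute $\theta$ and the paired scales $r/2$ and $\theta r$), the trivial comparisons $F(r/2,w)\le 4F(r,w)$, $A(r/2,w)\le 2A(r,w)$ (and similarly for $D$), the hypothesis $E(r/2,w)\le 100\H_1$, the energy estimate $A(r/2,w)\lesssim F(r,w)+D(r,w)+1$ from Lemma~\ref{lem-energy-iter}, and Young's inequality, one arrives at a recursion
$$\Phi(\theta r,w)\le\tfrac12\Phi(r,w)+C(\H_1^2+1),\qquad \Phi:=F+D,$$
valid for $r$ in (roughly) $(2R,1/5)$. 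Since $Q(1/10,w)\subset Q(2/3)\subset Q(1)$ one has $\Phi(1/10,w)\le 200\,\G$, so iterating the recursion from $r_0=1/10$ down to a scale $r_*$ comparable to $R$ (and then comparing $\Phi$ at scale $R$ with $\Phi$ at scale $r_*$, using $F(R,w)\le(r_*/R)^2F(r_*,w)$) yields $F(R,w)+D(R,w)\lesssim\G+\H_1^2+1$; for $R$ bounded away from $0$ this is anyway immediate from $Q(R,w)\subset Q(1)$. Finally the local energy inequality gives, as in \eqref{eq-energies-G}, $\H_1\le\int_{Q(9/10)}|\nabla v|^2\,dxdt\lesssim\G+1$, whence $\tilde\G\lesssim(\G+1)^2$ as desired.

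With this in place the conclusion follows mechanically: Lemma~\ref{lem-criterion} applied to $(\tilde v,\tilde p)$ gives that $\tilde v$ is regular in $\overline{Q(\tilde\sigma)}$, $\tilde\sigma:=c_1(\tilde\G+1)^{-\beta}\ge\sigma$, with $\|\nabla_y^m\tilde v\|_{L^\infty(\overline{Q(\tilde\sigma)})}\lesssim_m\tilde\sigma^{-(m+1)}$; undoing the rescaling ($\nabla_x^m v=R^{-(m+1)}\nabla_y^m\tilde v$ composed with the scaling map) and using $\overline{Q(\sigma R,w)}\subset\overline{Q(\tilde\sigma R,w)}$ together with $\tilde\sigma\ge\sigma$ gives $v$ regular in $\overline{Q(\sigma R,w)}$ with $\|\nabla_x^m v\|_{L^\infty(\overline{Q(\sigma R,w)})}\lesssim_m(\tilde\sigma R)^{-(m+1)}\le(\sigma R)^{-(m+1)}$. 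The main obstacle is the second paragraph: running the scale-invariant iteration in the regime where only the fixed bound $E(\cdot,w)\le 100\H_1$ is available (rather than the smallness that drives the genuine $\e$-regularity iteration of Lemma~\ref{lem-criterion}), and checking that the resulting bound on $F(R,w)+D(R,w)$ is uniform in $R$ and polynomial in $\G$ once $\H_1$ is absorbed by the energy inequality; the rescaling bookkeeping and the choice of $c_2$ are then routine.
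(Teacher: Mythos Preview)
Your proposal is correct and follows essentially the same route as the paper: first run the $F+D$ iteration (Lemmas~\ref{lem-multiplicative} and \ref{lem-energy-iter}) from scale $\tfrac{1}{10}$ down to scale $R$ using the fixed bound $E(\cdot,w)\le 100\H_1\lesssim\G+1$ to get $F(R,w)+D(R,w)\lesssim(\G+1)^2$, then rescale and apply Lemma~\ref{lem-criterion} in $Q(R,w)$. The paper's proof is simply a terser version of what you wrote, with the rescaling bookkeeping left implicit in the phrase ``apply Lemma~\ref{lem-criterion} in $Q(R)$''.
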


\begin{proof}
We omit the dependence of the scale invariant quantities on the center $w$ for simplicity. By \eqref{eq-energies-G}, $\H_1 \lesssim \G+1$. Using Lemma \ref{lem-multiplicative}, we can find an absolute number $\theta>0$ such that, when $r \in (R, \frac{1}{10})$,
\begin{align} 
F(\theta r) &\le \frac1{16} F\(\frac{r}{2}\) + C A^\frac12\(\frac{r}{2}\) E\(\frac{r}{2}\) \le  \frac14 F(r) + C (\G+1) A^\frac12\(\frac{r}{2}\),  \label{eq-F-G+1} \\
D(\theta r) &\le \frac1{16} D\(\frac{r}{2}\) + C A^\frac12\(\frac{r}{2}\) E\(\frac{r}{2}\) \le \frac14 D(r) + C (\G+1) A^\frac12\(\frac{r}{2}\).
\end{align}
By Lemma \ref{lem-energy-iter},
\begin{equation} \label{eq-A-FD1-new}
A\(\frac{r}{2}\)  \lesssim F(r)+D(r)+1.
\end{equation}
Combining  \eqref{eq-F-G+1}--\eqref{eq-A-FD1-new} we have, when $r \in (R, \frac{1}{10})$,
\begin{equation}
F(\theta r)  + D (\theta r) \le  \frac{1}{2} (F(r) +  D(r)) + C(\G+1)^2.
\end{equation}
Clearly, $F\(\frac1{10}) + D(\frac{1}{10}\) \lesssim \G+1 \lesssim (\G+1)^2$. Induction gives $F(r) + D(r) \lesssim (\G+1)^2$ for all $r \in [R, \frac{1}{10}]$.  Now, we can apply Lemma \ref{lem-criterion} in $Q(R)$ to conclude. 
\end{proof}

\begin{proof}[Proof of Theorem \ref{thm-interval}]
We only deal with the case $\rho = \frac12$, $a=-\frac12$, $b = \frac12$, since the general case follows from the same arguments only with some changes in the constants.  By Theorem \ref{thm-ereg}, when $\G < \e_*$, the conclusion of Theorem \ref{thm-interval} clearly holds. Hence we may assume that $\G > \e_*$, and in particular, $\G \sim \G+1$. Let $\mathcal{H}_1 = \int_{Q(\frac23)} |\nabla v|^2 dxdt$ and 
$$R_k = (\G + 2)^{-\gamma k}, \ k \ge 1$$ 
with $\gamma > 4$ an absolute constant to be determined later. Let $\e_1$ be the absolute constant from Lemma \ref{lem-criterion}. 

Let $\sigma = c_2( \G +1 )^{-2 \beta}$. In view of Lemma \ref{lem-criterion-2}, to prove the theorem, it suffices to show that there exists $z \in (-\frac13, \frac13)$ and $k \in \{1,2,\cdots,\lfloor 100 \e_1^{-1} \mathcal{H}_1 \rfloor+1\}$, such that for all  $|x_1|<\frac12$, $|x_2| < \frac12$ and $-\frac14<t \le 0$, the following two statements hold:
\begin{enumerate}
\item[(a)] for all $r \in (R_k, \frac1{10})$,  $E(r,(x_1,x_2,z,t)) \le 100\H_1$,
\item[(b)] for all $r \in (\sigma R_k, R_k)$, $\E(r, (x_1,x_2,z,t)) \le \e_1$.
\end{enumerate}

Next, we argue by contradiction. Assume that for any $z \in (-\frac13, \frac13)$ and any $k \in \{1,2,\cdots,\lfloor 100 \e_1^{-1} \mathcal{H}_1 \rfloor+1\}$, there exists a point $w_k(z) = (x_{1k}(z), x_{2k}(z), z, t_k(z))$ with  $|x_{1k}(z)|<\frac12$, $|x_{2k}(z)| < \frac12$ and $-\frac14<t_k(z) \le 0$ such that (a) or (b) fails. For any such $k$, we divide the $z$-interval into two disjoint subsets $\(-\frac13, \frac13\) = \Z_{1,k} \cup \Z_{2,k}$ with
\begin{equation} \label{eq-Zk1}
\Z_{1,k} = \left\{ z \in \(-\frac13, \frac13\): \exists  r_{1,k}(z) \in \(\sigma R_k, \frac{1}{10}\), \ E^+(r_{1,k}(z),w_k(z)) \ge 100\H_1\right\}
\end{equation}
and
$$\Z_{2,k} = \(-\frac13, \frac13\) \setminus \Z_{1,k}.$$
By assumption, if $z \in \Z_{2,k}$, then
\begin{equation} \label{eq-E-upper}
E^+(r,w_k(z))<100\H_1
\end{equation}
for any $r \in (\sigma R_k, \frac{1}{10})$, and there exists $r_{2,k}(z) \in (\sigma R_k, R_k)$ such that 
\begin{equation} \label{eq-tildeE-lower-2}
\E(r_{2,k}(z), w_k(z))> \e_1.
\end{equation}
By \eqref{eq-tildeE-lower-2} and \eqref{eq-tildeE-E},
\begin{equation} \label{eq-E-lower-2}
E(r_{2,k}(z), w_k(z)) \gtrsim \e_1.
\end{equation}
By Lemma \ref{cor-vitali}, we can find finite or countable subsets $\T_{1,k} \subset \Z_{1,k}$ and $\T_{2,k} \subset \Z_{2,k}$ such that all the $z$-intervals $(z-r_{1,k}(z), z + r_{1,k}(z)), \ z\in \T_{1,k}$ and $(z-r_{2,k}(z), z + r_{2,k}(z)), \ z\in \T_{2,k}$ are mutually disjoint, and
\begin{equation} \label{eq-z-cover}
\( -\frac13, \frac13 \) \subset \bigcup\limits_{z \in \T_{i,k}, \, i=1,2} (z-5r_{i,k}(z), z + 5r_{i,k}(z)).
\end{equation}
Denote $Q_k(z) =  Q(r_{2,k}(z), w_k(z))$, $\lambda Q_k^+(z) =  Q^+(\lambda r_{2,k}(z), w_k(z))$ and $U_k = \bigcup\limits_{z \in \T_{2,k}} Q_k(z)$.  For any $z\in \T_{2,k}$, writing 
$$\Lambda(k,z) = \{z' \in \T_{2,k+1}: Q_{k+1}(z') \subset 2 Q_k^+(z)\},$$ 
we obtain
\begin{align} \label{eq-0236-2}
\left|U_{k+1} \cap Q_k(z) \right| &\lesssim \sum_{z'\in \Lambda(k,z)} r_{2,k+1}^5(z') \nonumber \\
&\le R_{k+1}^4 \sum_{z'\in \Lambda(k,z)} r_{2,k+1}(z') \nonumber \\
&\stackrel{\eqref{eq-E-lower-2}}\lesssim R_{k+1}^4 \sum_{z'\in \Lambda(k,w)} \int_{Q_{k+1}(z')} |\nabla v|^2 dxdt  \nonumber \\
&\le R_{k+1}^4 \int_{2Q_k^+(z)} |\nabla v|^2 dxdt \nonumber \\
&\stackrel{\eqref{eq-E-upper}}\lesssim R_{k+1}^4 r_{2,k}(z) \H_1.
\end{align}
Let
\begin{equation} \label{eq-EEk-def2}
\EE_k = \int_{U_k \setminus U_{k+1}}  |\nabla v|^2 dxdt,
\end{equation}
then clearly 
\begin{equation}  \label{eq-nonover2}
\sum_{k \ge 1} \EE_k \le \int_{U_1} |\nabla v|^2 dxdt \le \H_1.
\end{equation}
By the pigeonhole principle, there exists $K \in \{1,2,\cdots,\lfloor 100 \e_1^{-1} \mathcal{H}_1 \rfloor+1\}$ such that
\begin{equation} \label{eq-EEk-small}
\EE_{K} \le \frac{\H_1}{\lfloor 100 \e_1^{-1} \mathcal{H}_1 \rfloor+1} \le \frac{\e_1}{100}.
\end{equation}
Using \eqref{eq-Zk1} and the disjointness of $Q^+(r_{1,k}(z), w(z))$, we can estimate
\begin{align} \label{eq-r1k-1100}
\sum_{z\in \T_{1,k}} r_{1,k}(z) &\le \sum_{z\in \T_{1,k}} \frac{1}{100 \H_1} \int_{Q^+(r_{1,k}(z), w(z)) \cap Q(1)} |\nabla v|^2 dxdt \nonumber \\
&\le \frac{1}{100 \H_1} \int_{Q(\frac23)} |\nabla v|^2 dxdt = \frac{1}{100}.
\end{align}
Using \eqref{eq-tildeE-lower-2}, the disjointness of  $Q_k(z)$ and H\"older's inequality, we have
\begin{align}
&\quad \ \sum_{z\in \T_{2,k}} r_{2,k}(z) \nonumber \\
&\stackrel{\eqref{eq-tildeE-lower-2}}\le \sum_{z\in \T_{2,k}} \frac{1}{\e_1 r_{2,k}(z)^{\frac59}} \( \int_{Q_k(z)} |\nabla v|^{\frac95} dxdt \)^\frac{10}{9} \nonumber \\
&\le \sum_{z\in \T_{2,k}} \frac{1}{\e_1 r_{2,k}(z)^{\frac59}} \( \int_{Q_k(z) \cap U_{k+1}} |\nabla v|^{\frac95} dxdt +  \int_{Q_k(z) \setminus U_{k+1}} |\nabla v|^{\frac95} dxdt \)^\frac{10}{9} \nonumber \\
&\stackrel{\mbox{\tiny (H\"older)}}\le \sum_{z\in \T_{2,k}} 2\frac{|U_{k+1} \cap Q_k(z)|^\frac19}{\e_1 r_{2,k}(z)^{\frac59}} \int_{Q_k(z) \cap U_{k+1}} |\nabla v|^2 dxdt + \frac{4}{\e_1}\int_{Q_k(z) \setminus U_{k+1}} |\nabla v|^2 dxdt \nonumber \\
&\le \sup_{z \in \T_{2,k}}\frac{2|U_{k+1} \cap Q_k(z)|^\frac19}{\e_1 r_{2,k}(z)^{\frac59}} \int_{Q(\frac23)} |\nabla v|^2 dxdt + \frac{4}{\e_1}\int_{U_k \setminus U_{k+1}} |\nabla v|^2 dxdt.
\end{align}
Applying \eqref{eq-0236-2} and \eqref{eq-EEk-small}, we obtain
\begin{align} \label{eq-r2k-150}
\sum_{z\in \T_{2,k}} r_{2,k}(z) &\le  C_3 \H_1^\frac{10}9 \(\frac{R_{k+1}}{\inf_{z \in \T_{2,k}}r_{2,k}(z)}\)^{\frac49} + \frac{4}{100} \le \frac{1}{20}.
\end{align}
For the last inequality, we used that,  when $\gamma$ is taken sufficiently large,
$$C_3 \H_1^\frac{10}9 \(\frac{R_{k+1}}{\sigma R_k}\)^{\frac49} \le C (\G + 1)^\frac{10}9 \(\frac{R_{k+1}}{(\G+1)^{-2\beta} R_k}\)^{\frac49} \le C (\G+1)^{\frac{8\beta + 10}{9}} (\G+2)^{-\frac{4\gamma}{9}} \le \frac{1}{100}.$$
On the other hand, by \eqref{eq-z-cover} we have
\begin{equation} \label{eq-23-r1kr2k}
\frac23 \le 10 \( \sum_{z\in \T_{1,k}} r_{1,k}(z) +  \sum_{z\in \T_{2,k}} r_{2,k}(z) \).
\end{equation}
The estimates \eqref{eq-r1k-1100}, \eqref{eq-r2k-150} and \eqref{eq-23-r1kr2k} together give a contradiction, which finishes the proof.
\end{proof}

Next, we show the existence of epochs of regularity using the similar idea as above, without invoking the extra smallness of Dirichlet integral on hollowed sets. 

\begin{proof}[Proof of Proposition \ref{prop-epoch}]
We only consider the case $\rho= \frac12$, $a=-\frac34$, $b = -\frac12$, since the general case is similar. 

Let $R = \frac{\e_1}{100} \(\int_{Q(\frac{9}{10})} |\nabla v|^2 dxdt + 1\)^{-1} \gtrsim (\G+1)^{-1}$. In view of Lemma \ref{lem-criterion} (with rescaling and translation of the domain), it suffices to show that there exists $t \in (-\frac23, -\frac12)$, such that for any $c_1 (\G R^{-2} + 1)^{-\beta} \le r \le R$ and any $x \in B(\frac12)$, we have  $\E(r,(x,t))<\e_1$. 

We argue by contradiction. Assume that for any $t \in (-\frac23, -\frac12)$, there exists $c_1 (\G R^2 + 1)^{-\beta} \le r(t) \le R$ and $x(t)$ with $|x(t)|<\frac12$, such that $\E(r(t),(x(t),t)) \ge \e_1$. By \eqref{eq-tildeE-E} we also have $E(r(t),(x(t),t)) \ge \frac12 \e_1$. Using the Vitali covering lemma for the $t$-intervals $(t-r^2(t), t), t \in (-\frac23, -\frac12)$, there exists a finite or countable set $\mathcal{T} \subset (-\frac23, -\frac12)$ such that the intervals $(t-r^2(t),t)$ are mutually disjoint for $t \in \mathcal{T}$, and 
\begin{equation}
\(-\frac23, -\frac12\) \subset \bigcup\limits_{t \in (-\frac23, -\frac12)} (t-r^2(t),  t) \subset \bigcup\limits_{t \in (-\frac23, -\frac12)} (t-3r^2(t),  t+2r^2(t)).
\end{equation}
Clearly, $Q(r(t), (x(t), t))$ are also mutually disjoint for $t \in \mathcal{T}$. Consequently, we have
\begin{align}
\frac16 \le \sum_{t \in \mathcal{T}} 5r^2(t) &\le 10 R \sum_{t \in \mathcal{T}} \e_1^{-1} \int_{Q(r(t), (x(t), t))} |\nabla v|^2 dxdt \nonumber \\
&\le 10\e_1^{-1} R \int_{Q(\frac{9}{10})} |\nabla v|^2 dxdt = \frac1{10},
\end{align}
which gives a contradiction. Hence, the lemma is proved.
\end{proof}

Note that a simple alternative proof of Proposition \ref{prop-epoch} can be given using $\e$-regularity Theorem \ref{thm-ereg} and  a pigeonhole principle for the integrals $\int |u|^\frac{10}{3} dxdt$ and $\int \(\int |p|^\frac{5}{3} dx\)^{\frac9{10}} dt$ which have dimensions $\frac53$ and $\frac{17}{10}$ respectively, both less than $2$.

\section{The axially symmetric case} \label{sec-4}

Let $v$ be an axially symmetric (with respect to the $z$-axis) local suitable weak solution in $Q(1)$. Recall that we can use the cylindrical coordinates $(\varrho, \theta,  z)$. Due to Theorem \ref{thm-partial-reg-old} and the axial-symmetry assumption, we know that $v$ is regular away from the $z$-axis and all the singular points must lie on the $z$-axis. Quantitative estimates for $v$ in $\varrho>0$ is presented in Lemma \ref{lem-est-away} below. We point out that the proofs of Theorems \ref{thm-partial-reg} and \ref{thm-interval} can be simplified in the following way. Instead of using the sets $U_k$ to define $\EE_k$ (see \eqref{eq-EEk-def1} and \eqref{eq-EEk-def2}), we can simply introduce the hollowed integrals
\begin{equation}
\EE_k' = \int_{R_{k} <\varrho< R_{k+1}} |\nabla v|^2 dxdt.
\end{equation}
Clearly, we also have the non-overlapping property similar to \eqref{eq-nonover1} and \eqref{eq-nonover2},
$$\sum_{k \ge 1}\EE_k' = \int_{\varrho<R_1} |\nabla v|^2 dxdt < +\infty,$$
which is sufficient for the rest of the proofs.

\subsection{A non-quantitative result} \label{sec-41}

Before embarking on the proof of the quantitative Theorem \ref{thm-small-Gamma}, we present a non-quantitative version of it whose proof relies on a compactness argument. However, we emphasize that a direct quantification of this proof is a nontrivial task. 

\begin{proposition}[non-quantitative small-swirl regularity] \label{prop-small-Gamma}
Let $v$ be an axially symmetric local suitable weak solution in $Q(1)$, and $\Gamma = \varrho v^\theta$. There exists a number $\varepsilon(\G[v,p])>0$, such that if $\|\Gamma\|_{L^\infty(Q(1))} \le \varepsilon$, then $v$ is regular in $Q({\frac12})$.
\end{proposition}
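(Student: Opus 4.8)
The plan is to argue by contradiction and compactness. Suppose the statement fails: then there is a number $B>0$ and a sequence of axially symmetric local suitable weak solutions $(v_n,p_n)$ in $Q(1)$ with $\G[v_n,p_n]\le B$, swirl $\Gamma_n=\varrho v_n^\theta$ satisfying $\|\Gamma_n\|_{L^\infty(Q(1))}\to 0$, but with a singular point $w_n\in\overline{Q(1/2)}$. Since all singular points of an axially symmetric suitable weak solution lie on the $z$-axis, we may write $w_n=(0,0,z_n,t_n)$ with $z_n\in[-1/2,1/2]$, $t_n\in[-1/4,0]$; passing to a subsequence, $w_n\to w_\infty=(0,0,z_\infty,t_\infty)$.

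The first step is to extract a limit solution. From the uniform bound $\G[v_n,p_n]\le B$ and the local energy inequality \eqref{eq-local-energy} one controls $A,E,F,D$ on interior subcylinders; by \cite{Lin,LadySer}-type compactness (weak limits in $L^\infty_tL^2_x\cap L^2_t\dot H^1_x$, strong limits in $L^3_{x,t}$ on compact subsets, weak limits of the pressure in $L^{3/2}_{x,t}$) we obtain, along a subsequence, a limit $(v_\infty,p_\infty)$ that is again an axially symmetric local suitable weak solution in, say, $Q(3/4)$, with $\G[v_\infty,p_\infty]\le B$. The key point is that the swirl of the limit vanishes: $\Gamma_\infty=\varrho v_\infty^\theta\equiv 0$, i.e. $v_\infty^\theta\equiv 0$, because $\Gamma_n\to 0$ uniformly and $\Gamma$ passes to the limit.

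The second step is to show that a no-swirl axially symmetric suitable weak solution is regular on the interior, in particular near $w_\infty$. This is the classical fact that axially symmetric Navier--Stokes without swirl has no singularities (going back to Ladyzhenskaya and Ukhovskii--Yudovich; see also \cite{Seregin0}); alternatively one invokes the one-point criterion \eqref{eq-seregin-onept} together with the no-swirl structure, or a local Liouville/rescaling argument using that without swirl the vorticity component $\omega^\theta$ obeys a favorable equation. In any case $w_\infty$ is a regular point of $v_\infty$, so by Theorem \ref{thm-ereg} there is a small $r>0$ with $\G[v_\infty,p_\infty;Q(r,w_\infty)]<\varepsilon_*/2$ (here $\varepsilon_*$ is the constant of Theorem \ref{thm-ereg}, and $\G[\cdot;Q(r,w)]$ denotes the scaled version of $\G$ on $Q(r,w)$).

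The final step is to transfer this smallness back to $v_n$. By the strong $L^3$ convergence of $v_n\to v_\infty$ and the weak $L^{3/2}$ convergence of $p_n$ on a neighborhood of $w_\infty$ — upgraded, if needed, by the standard pressure representation so that $p_n\to p_\infty$ strongly in $L^{3/2}$ locally — we get $\G[v_n,p_n;Q(r,w_n)]<\varepsilon_*$ for all large $n$. Theorem \ref{thm-ereg} (after rescaling by $r$) then forces $w_n$ to be a regular point of $v_n$, contradicting the choice of $w_n$. This contradiction proves the proposition.

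The main obstacle I expect is the compactness of the pressure: the uniform bound is only $p_n\in L^{3/2}_{x,t}$, which is not by itself enough to pass the nonlinear term $(|v|^2+2p)v\cdot\nabla\phi$ to the limit in the local energy inequality, nor to conclude strong $L^{3/2}$ convergence needed in the last step. One resolves this in the usual way via the local pressure decomposition $p_n=p_n^{(1)}+p_n^{(2)}$ with $p_n^{(1)}=-\Delta^{-1}\mathrm{div}\,\mathrm{div}(\mathbf 1_{B}v_n\otimes v_n)$ controlled by $\|v_n\|_{L^3}^3$ via Calderón--Zygmund (hence strongly convergent in $L^{3/2}$ from the strong convergence of $v_n$), and $p_n^{(2)}$ harmonic in space with interior bounds and hence locally smooth and compact. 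A secondary technical point is ensuring the limit still satisfies the local energy \emph{inequality} (not equality), which is standard by lower semicontinuity of the dissipation term under weak $L^2$ convergence of $\nabla v_n$.
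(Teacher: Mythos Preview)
Your proposal is correct and takes essentially the same compactness-and-contradiction approach as the paper. The only noteworthy difference is that the paper sidesteps the issue of strong $L^{3/2}$ convergence of the pressure: rather than passing to a limiting $p_\infty$, it directly bounds $\int_{Q(b,w_\infty)}|p_k|^{3/2}$ for each fixed $k$ via exactly the decomposition you describe (the harmonic part is made small by taking $b\ll a$, the Calder\'on--Zygmund part by the strong $L^3$ convergence of $v_k$ to the bounded limit $v_\infty$), so neither pressure convergence nor suitability of the limit beyond what is needed for the no-swirl regularity result is actually required at the final step.
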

\begin{proof}
We argue by contradiction and follow the stability of singularity argument in \cite[Lemma 2.1]{RusinSverak}. Suppose that the conclusion is wrong, then there exist a sequence of axially symmetric local suitable weak solutions $v_k, p_k$ in $Q(1)$ with
$$\iint_{Q_1} |v_k|^3 + |p_k|^\frac32 \le \G < + \infty,$$
and
$$|\Gamma_k|_{L^\infty} \le \e_k \to 0,$$
and a sequence of points $w_k \in \overline{Q(\frac12)}$ such that $w_k$ is a singular point of $v_k$. Up to taking subsequences we have the convergences (see \cite[Page 367]{LadySer} for the discussion on compactness)
$$w_k \to w_\infty \in \overline{Q\Big(\frac12\Big)},$$
and
$$v_k \to v_\infty \mathrm{\ strongly \ in\ } L_t^{3}L_x^{3}\Big(Q\Big(\frac23\Big)\Big).$$
Note that $v_\infty$ is a suitable weak solution with no swirl ($v_\infty^\theta = 0$). It is known that $v_\infty$ must be regular (see, \emph{e.g.}, \cite{Kang}), and thus
$$\|v_\infty\|_{L^\infty(Q(\frac23))} \le A$$
for some finite number $A$. Let $a(A)> b(A)> 0 $ be small numbers to be chosen later. Using the convergence of $v_k \to v_\infty$, we have, for large $k$,
$$\int_{ B_a(0) \times (-b^2, 0)  + w_\infty} |v_k|^3 dxdt \le 2A a^3 b^2$$
and
$$\int_{Q(b,w_\infty)} |v_k|^3  dxdt < 2A b^5$$
Then, using the usual decomposition of $p_k$ (see the proof of Lemma \ref{lem-multiplicative}) in the domain $B_a(0) \times (-b^2, 0)  + w_\infty$, we have
\begin{equation}
p_k = p_{k1} + p_{k2}
\end{equation}
where $\Delta p_{k1} = 0$ and 
\begin{equation}
\int_{B_a(0) \times (-b^2, 0)  + w_\infty} |p_{k2}|^\frac32 dxdt \lesssim \int_{B_a(0) \times (-b^2, 0)  + w_\infty} |v_k|^3  dxdt
\end{equation}
Consequently, we obtain
\begin{align*}
\int_{Q(b,w_\infty)}|p_k|^\frac32 dxdt &\lesssim \int_{Q(b,w_\infty)}|p_{k1}|^\frac32 dxdt + \int_{Q(b,w_\infty)}|p_{k2}|^\frac32 dxdt \\
&\lesssim \frac{b^3}{a^3} \int_{B_a(0) \times (-b^2, 0)  + w_\infty} |p_{k1}|^\frac32 dxdt + \int_{B_a(0) \times (-b^2, 0)  + w_\infty} |p_{k2}|^\frac32 dxdt \\
&\lesssim \frac{b^3}{a^3} \int_{Q(1)} |p_{k}|^\frac32 dxdt + \int_{B_a(0) \times (-b^2, 0)  + w_\infty} |p_{k2}|^\frac32 dxdt \\
&\lesssim \frac{\G b^3}{a^3} + A a^3 b^2.
\end{align*}
By taking $a(A), b(A)$ suitably small, we get
$$\int_{Q(b,w_\infty)} |v_k|^3 + |p_k|^\frac32 \, dxdt \le \varepsilon_* b^2$$
where $\varepsilon_*$ is from Theorem \ref{thm-ereg}. This contradicts with the assumption that $w_k$ is a singular point of $v_k$. Hence, the proposition is proved.
\end{proof}

\subsection{Quantitative small-swirl regularity}

We need the next two lemmas for general axially symmetric suitable weak solutions.  Lemma \ref{lem-Gamma-bound} is hardly  a   new result. \eqref{eq-Gamma-upper-1} is essentially proved in \cite[Proposition 2.2]{Seregin2}, and \eqref{eq-Gamma-upper-2} can also be obtained using the same idea. Lemma \ref{lem-est-away} concerns quantitative regularity estimates away from the $z$-axis. Following the proof of \cite[Proposition 4.1]{Seregin1}, one would get that $\|\nabla^m v\|_{L^\infty(Q(R)\cap \{\varrho > \delta\})}$ depends exponentially on $\G[v,p]$ for any given $R<1$ and $\delta > 0$. With minor changes in the proof, the dependence can be improved to a polynomial one. For completeness, we give self-contained proofs of Lemmas \ref{lem-Gamma-bound} and \ref{lem-est-away} in the Appendix. 

\begin{lemma} \label{lem-Gamma-bound}
Let $(v,p)$ be a local suitable weak solution to the Navier-Stokes equations in $Q(1)$, and assume that $v, p$ are axially symmetric. Then, $\Gamma = \varrho v^\theta$ satisfies
\begin{equation} \label{eq-Gamma-upper-1}
\|\Gamma\|_{L^\infty(Q(\frac34))} \lesssim (\G[v,p]+1)^{2},
\end{equation}
and
\begin{equation} \label{eq-Gamma-upper-2}
\|\Gamma\|_{L^\infty(Q(\frac34) \cap \{\varrho \ge \frac14\})} \lesssim \G[v,p]+1.
\end{equation}
\end{lemma}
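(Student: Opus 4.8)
The plan is to estimate $\Gamma=\varrho v^\theta$ by viewing it as a solution of the scalar drift-diffusion equation \eqref{eq-Gamma-intro}, $\partial_t\Gamma-\Delta\Gamma+\frac{2}{\varrho}\partial_\varrho\Gamma+b\cdot\nabla\Gamma=0$, and running a Moser-type iteration (or, equivalently, invoking a De Giorgi $L^2\to L^\infty$ bound) on a nested sequence of parabolic cylinders shrinking from $Q(1)$ to $Q(\tfrac34)$. The only data one may use is the suitable-weak-solution bound $\G[v,p]<\infty$, which via the local energy inequality \eqref{eq-local-energy} (cf.\ \eqref{eq-energies-G}) controls $\|v\|_{L^\infty_tL^2_x}+\|\nabla v\|_{L^2_{x,t}}$ on slightly smaller cylinders in terms of $\G+1$; in particular the drift $b=v^\varrho e_\varrho+v^ze_z$ lies in the energy class. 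The singular coefficient $\tfrac{2}{\varrho}\partial_\varrho\Gamma$ is in fact favorable: testing the equation against $\Gamma|\Gamma|^{q-2}$ and using that $\int \tfrac{2}{\varrho}\partial_\varrho\Gamma\cdot\Gamma|\Gamma|^{q-2}\,dx$ has a good sign after integration by parts (this is the classical observation behind \eqref{eq-Gamma-apr}), so it may simply be discarded, and one is left with a standard parabolic energy estimate for a divergence-free drift in the Leray–Hopf class.

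Concretely, I would first fix a chain of radii $\tfrac34=r_\infty<\cdots<r_1<r_0=1$ and cutoffs $\phi_j$, derive the Caccioppoli inequality for $w_q:=|\Gamma|^{q/2}$ on level $j$ in terms of level $j-1$, and then use the parabolic Sobolev embedding $L^\infty_tL^2_x\cap L^2_tH^1_x\hookrightarrow L^{10/3}_{x,t}$ together with the $L^3_{x,t}$ bound on $b$ (which follows from $v\in L^3(Q(1))$, hence $\|b\|_{L^3}\lesssim\G^{1/3}$) to absorb the drift term via Hölder and Young. This produces a reverse-Hölder gain $\|\Gamma\|_{L^{\kappa q}(Q_j)}\le (C\,\mu^{j}\,(\G+1)^{a})^{1/q}\|\Gamma\|_{L^{q}(Q_{j-1})}$ with $\kappa=5/3$ and some absolute $a$; iterating over $q=2\kappa^j$ and summing the geometric-type series yields $\|\Gamma\|_{L^\infty(Q(\tfrac34))}\lesssim(\G+1)^{A}\|\Gamma\|_{L^2(Q(1))}$ for an absolute exponent $A$. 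Since $\|\Gamma\|_{L^2(Q(1))}=\|\varrho v^\theta\|_{L^2(Q(1))}\le\|v\|_{L^2(Q(1))}\lesssim\G^{1/3}+1\lesssim\G+1$, one arrives at a bound of the form $(\G+1)^{A+1}$; optimizing the bookkeeping (in particular starting the iteration from the $L^{10/3}$ rather than the $L^2$ norm of $\Gamma$, which is controlled scale-invariantly) should bring the exponent down to $2$, giving \eqref{eq-Gamma-upper-1}.

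For \eqref{eq-Gamma-upper-2}, the point is that on the region $\{\varrho\ge\tfrac14\}$ one has genuine interior regularity for $v$ away from the axis: by Theorem \ref{thm-partial-reg-old} and axial symmetry $v$ is smooth there, and by Lemma \ref{lem-est-away} (proved in the Appendix) one has quantitative bounds $\|v\|_{L^\infty(Q(\tfrac45)\cap\{\varrho\ge\tfrac15\})}\lesssim(\G+1)^{O(1)}$. The better exponent then comes not from a smaller power in the iteration but from the fact that $\Gamma=\varrho v^\theta$ and $v^\theta$ is a \emph{bounded} component there; more precisely, rather than iterating I would simply observe that $\Gamma$ restricted to an annular region bounded away from the axis solves a uniformly parabolic linear equation with coefficients controlled polynomially in $\G$, and a single application of the linear $L^2$-to-$L^\infty$ estimate (with the drift term again treated by the energy bound, now with no singularity issue at all) gives $\|\Gamma\|_{L^\infty(Q(\tfrac34)\cap\{\varrho\ge\tfrac14\})}\lesssim(\G+1)\,\|\Gamma\|_{L^2(Q(\tfrac45)\cap\{\varrho\ge\tfrac15\})}\lesssim\G+1$; alternatively one reruns the Moser iteration but now the $\varrho^{-1}$ coefficient is bounded by $4$, so no loss of a power is incurred and the single factor $\G+1$ is exactly the one coming from the drift estimate.

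The main obstacle I expect is purely technical bookkeeping: tracking the precise power of $(\G+1)$ through the Moser iteration so that it closes at the exponent $2$ (and not merely some larger absolute exponent), and handling the drift term $b\cdot\nabla\Gamma$ — since $b$ is only in the supercritical energy class $L^\infty_tL^2_x\cap L^2_tH^1_x$, one must use the parabolic Sobolev interpolation carefully to keep all constants absolute times a fixed power of $\G$, rather than, say, a constant depending on $\|b\|$ in an uncontrolled way. Once the iteration is set up with the scale-invariant starting quantity and the sign of the $\varrho^{-1}$ term is exploited, the rest is routine; this is why the authors (reasonably) defer the full argument to the Appendix and note that \eqref{eq-Gamma-upper-1} is ``essentially proved in \cite[Proposition 2.2]{Seregin2}.''
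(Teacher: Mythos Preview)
Your approach to \eqref{eq-Gamma-upper-1} is essentially the paper's: Moser iteration on the equation for $\Gamma$, with the drift controlled via $\|b\|_{L^{10/3}}\lesssim(\G+1)^{1/2}$ and the iteration started from $\|\Gamma\|_{L^{10/3}}\lesssim\|v\|_{L^{10/3}}\lesssim(\G+1)^{1/2}$, yielding the exponent $\tfrac32+\tfrac12=2$. One small correction: in the localized setting the term $\tfrac{2}{\varrho}\partial_\varrho\Gamma$ does \emph{not} have a good sign after testing with $\Gamma^q\phi^2$; rather it integrates by parts to $\int\Gamma^{q+1}\tfrac{2}{\varrho}\partial_\varrho\phi^2\,dx$, which is harmless because the paper takes $\phi=\phi(|x|,t)$ radial so that $\tfrac{1}{\varrho}\partial_\varrho\phi^2=\tfrac{1}{|x|}(\phi^2)'$ is bounded on the transition shell. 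The paper also spends a paragraph justifying that one may test with $\Gamma^q\phi^2$ at all (via an auxiliary cutoff $\psi_\e(\varrho)$ near the axis and passage to the limit $\e\to0$), since for a suitable weak solution $v$ may be singular on the axis; you should not skip this.

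Your treatment of \eqref{eq-Gamma-upper-2} has a genuine gap. Invoking Lemma~\ref{lem-est-away} is circular: in the Appendix that lemma is proved \emph{after} Lemma~\ref{lem-Gamma-bound} and explicitly uses it (to bound $\|v^\theta\|_{L^\infty}$ away from the axis). Your alternative --- rerunning the iteration and noting that $\varrho^{-1}$ is now bounded --- misidentifies the source of the improvement: the $\tfrac{2}{\varrho}$ term was already harmless in part~(1), so making it bounded gains nothing, and the 3D iteration would still produce $(\G+1)^{3/2}$ from the geometric series. The actual mechanism in the paper is \emph{dimensional}: on $\{\varrho\ge\tfrac14\}$, by axial symmetry the problem is effectively two-dimensional ($\varrho\,d\varrho\,dz\sim d\varrho\,dz$), so one has the stronger embedding $L^\infty_tL^2_x\cap L^2_tH^1_x\hookrightarrow L^4_{x,t}$. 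This upgrades both the drift estimate (the H\"older splitting now uses $\|b\|_{L^4}$, and after Young the per-step factor is $(\G+1)^1$ rather than $(\G+1)^2$) and the iteration ratio (from $5/3$ to $2$, so the geometric sum is $\sum_{k\ge 2}2^{-k}=\tfrac12$ rather than $\tfrac32$). The final bound is then $(\G+1)^{1/2}\cdot\|\Gamma\|_{L^4}\lesssim(\G+1)^{1/2}\cdot(\G+1)^{1/2}=\G+1$.
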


\begin{lemma} \label{lem-est-away}
Let $(v,p)$ be a local suitable weak solution to the Navier-Stokes equations in $Q(1)$, and assume that $v, p$ are axially symmetric. Then for any $R<1$ and any $\delta >0$,
\begin{equation}
\|\nabla^m v\|_{L^\infty(Q(R)\cap \{\varrho > \delta\})} \lesssim_{m,\delta} (\G[v,p]+1)^{O_m(1)}, \quad m =0,1,2,\cdots.
\end{equation}
\end{lemma}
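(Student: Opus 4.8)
The plan is to bootstrap from the classical CKN partial regularity theorem (Theorem \ref{thm-partial-reg-old}), which tells us that for an axially symmetric suitable weak solution all singular points lie on the $z$-axis, so $v$ is automatically regular on the open set $\{\varrho > 0\}$. The quantitative content to be extracted is that the regularity estimates at distance $\delta$ from the axis depend only \emph{polynomially} on $\G = \G[v,p]$. First I would fix $R < 1$ and $\delta > 0$ and set $\rho = \frac{R+1}{2}$, $\delta' = \frac{\delta}{2}$, and work in the region $Q(\rho) \cap \{\varrho > \delta'\}$. The natural starting point is the energy bound \eqref{eq-energies-G}, which gives $A(\cdot) + E(\cdot) + D^{4/3}(\cdot) \lesssim (\G+1)^{O(1)}$ at scale comparable to $1$; the goal is to run an iteration of the scale-invariant quantities (Lemmas \ref{lem-multiplicative}, \ref{lem-energy-iter}) centered at points $w$ with $\varrho(w) \ge \delta'$ down to a \emph{fixed} scale $r_0 = r_0(\delta) > 0$, rather than all the way to scale zero, and conclude via Theorem \ref{thm-ereg} (the $\e$-regularity criterion with its higher-derivative estimates).

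The key steps, in order: (i) record that by \eqref{eq-energies-G} and Hölder, the quantities $F\left(\tfrac{1}{10}, w\right) + D\left(\tfrac{1}{10}, w\right)$ are bounded by $C(\G+1)^{O(1)}$ uniformly for $w \in \overline{Q(\rho)}$; (ii) for centers \emph{on the axis} one would only get a polynomial bound on $F + D$ down to a fixed scale but not the crucial smallness needed for $\e$-regularity — this is exactly why the restriction $\varrho > \delta$ matters. The point is that away from the axis one can use the extra structure of ASNS, namely the maximum-principle bound \eqref{eq-Gamma-upper-2} from Lemma \ref{lem-Gamma-bound}, $\|\Gamma\|_{L^\infty} \lesssim \G+1$ on the relevant region, which controls $v^\theta = \Gamma/\varrho \lesssim (\G+1)/\delta$ pointwise, so the swirl component is already bounded. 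Combined with the 2D nature of the equations for $(v^\varrho, v^z)$ in the region $\varrho \ge \delta'$ (which, after localizing with a cutoff supported in $\{\varrho > \delta'/2\}$, behaves like a 2D Navier-Stokes system with a controlled forcing term coming from the swirl), one obtains compactness-free a priori estimates; the cleanest route is to cite/follow \cite[Proposition 4.1]{Seregin1} but keep track of the constants so that the exponential dependence there becomes polynomial — essentially because every nonlinear estimate entering the argument is at worst polynomial in the $L^3$ and $L^{3/2}$ norms.

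Concretely, I would: localize the ASNS system to an annular cylinder $\{\delta'/2 < \varrho < 2\rho\} \times (-\rho^2, 0)$ using a smooth cutoff $\chi(\varrho)$; write the equation for $u = (v^\varrho, v^z)$ in those coordinates, noting the forcing $\frac{(v^\theta)^2}{\varrho} e_\varrho$ is bounded in $L^\infty$ by $\lesssim (\G+1)^2/\delta^3$ using \eqref{eq-Gamma-upper-2}; apply the energy inequality together with parabolic $L^p$ and Schauder theory on this bounded away-from-axis region to upgrade $v \in L^\infty_t L^2_x \cap L^2_t H^1_x$ first to $L^\infty$ (e.g.\ via the $\e$-regularity theorem, which applies once a \emph{local} smallness of $F + D$ at \emph{some small but fixed} scale is obtained — and that smallness comes for free by choosing the scale $r_0$ small depending only on $\delta$ and on the now-\emph{bounded} local quantities, using absolute continuity of the fixed integral $\int_{Q(\rho)}(|v|^3 + |p|^{3/2})$ — wait, this reintroduces a non-quantitative step, so instead one uses the multiplicative iteration Lemma \ref{lem-energy-iter} driven down by the factor $\theta^{2/3}$ until the quantities fall below $\e_*$, which happens after $O(\log(\G+1) + \log(1/\delta))$ steps, hence at scale $r_0 \gtrsim (\G+1)^{-O(1)}$, a genuine quantitative bound); and finally bootstrap derivatives with Theorem \ref{thm-ereg}. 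The main obstacle I anticipate is \textbf{keeping the pressure under control near the cutoff}: the pressure is globally coupled, so localizing to $\{\varrho > \delta'/2\}$ forces a harmonic-plus-local decomposition $p = p_1 + p_2$ as in Lemma \ref{lem-multiplicative}, and one must show the harmonic part $p_2$, when restricted to the smaller region $\{\varrho \ge \delta\}$, is controlled polynomially — this uses interior estimates for harmonic functions and the a priori bound $\|p\|_{L^{3/2}(Q(1))}^{3/2} \le \G$, giving a factor like $\delta^{-O(1)}(\G+1)^{O(1)}$, which is acceptable. Once the constants are tracked through this decomposition and through the finitely many ($O(\log(\G+1))$) iteration steps, the exponent $O_m(1)$ in the conclusion emerges, and the detailed but routine computation is deferred to the Appendix as the paper indicates.
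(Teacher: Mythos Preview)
Your high-level plan --- exploit the effectively two-dimensional structure in $\{\varrho>\delta'\}$, control $v^\theta$ through the $\Gamma$ bound, and feed the result into Theorem~\ref{thm-ereg} at a scale $r_0\gtrsim(\G+1)^{-O(1)}$ --- is the right one and matches the paper's strategy. But the concrete mechanism you propose for reaching smallness has a genuine gap. You claim the iteration of Lemmas~\ref{lem-multiplicative} and~\ref{lem-energy-iter} is ``driven down by the factor $\theta^{2/3}$'' and reaches $\e_*$ after $O(\log(\G+1))$ steps. That iteration only \emph{contracts} when smallness of $E$ or $\widetilde E$ is already available as input --- this is exactly how $\e_1$ enters the proof of Lemma~\ref{lem-criterion}. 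Absent such smallness the combined recursion has the shape $(F+D)(\theta r)\le\tfrac12(F+D)(r)+C(F+D)^{1/2}(r)\,E(r)+C$, which stabilizes at level $\sim(\G+1)^2$ (precisely as in the first paragraph of Lemma~\ref{lem-criterion-2}) rather than decaying. Nothing about the restriction $\varrho(w)\ge\delta$ feeds into that recursion, so the non-quantitative step you correctly flag is not repaired by the alternative you then propose.

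The paper supplies the missing ingredient differently and avoids the CKN iteration altogether. It runs a single localized energy estimate on the equation~\eqref{eq-Omega} for $\Omega=\omega^\theta/\varrho$ --- effectively the 2D vorticity of $(v^\varrho,v^z)$ --- closing it via the 2D interpolation $\|b\|_{L^4}\lesssim\|b\|_{L^\infty_tL^2_x}+\|\nabla b\|_{L^2_tL^2_x}$ (valid in $\{\varrho>\delta'\}$ since $\varrho\,d\varrho\,dz\sim d\varrho\,dz$ there) to get $\|\omega^\theta\|_{L^\infty_tL^2_x}\lesssim\G+1$. Then the 2D div-curl system plus Sobolev embedding yields $\|b\|_{L^\infty_tL^q_x}\lesssim_q\G+1$ for \emph{every} $q<\infty$; combined with $\|v^\theta\|_{L^\infty}\lesssim\G+1$ from Lemma~\ref{lem-Gamma-bound} and the usual pressure decomposition, this high spatial integrability gives directly
\[
\frac{1}{r^2}\int_{Q(r,w)}\bigl(|v|^3+|p|^{3/2}\bigr)\,dxdt\;\lesssim\;(\G+1)^3\,r^{1-9/q},
\]
which is $\le\e_*$ once $r\lesssim(\G+1)^{-4}$ (take $q=36$). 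The smallness thus comes from a jump in integrability furnished by the 2D vorticity estimate, not from iterating the scale-invariant quantities.
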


Now, we are ready to present:

\begin{proof} [Proof of Theorem \ref{thm-small-Gamma}]  The main idea is to localize the arguments of Lei-Zhang \cite{LeiZhang} and Wei \cite{Wei} with the help of Theorem \ref{thm-interval} and Proposition \ref{prop-epoch}.

 By Theorem \ref{thm-ereg}, when $\G < \e_1$, the conclusion of Theorem \ref{thm-small-Gamma} clearly holds. Hence we may assume that $\G \ge \e_1$, and in particular, $\G \sim \G+1$. Denote
\begin{equation}
J = \frac{\omega^\varrho}{\varrho} = -\frac{\partial_z v^\theta}{\varrho}
\end{equation}
and recall that for smooth solutions $J$ satisfies the equation
\begin{equation} \label{eq-J}
\partial_t J - \Delta J - \frac{2}{\varrho} \partial_\varrho J + b \cdot \nabla J = \bar{\omega} \cdot \nabla \frac{v^\varrho}{\varrho}. 
\end{equation}
where $\bar{\omega} = \omega^\varrho e_\varrho + \omega^z e_z$ and $\omega = \nabla \times v$ is the vorticity. We point out that $J$ was introduced by Chen-Fang-Zhang in \cite{CFZ}.

\textbf{Step 1}. Using the interval of regularity in Theorem \ref{thm-interval}, we can find $z_1 \in (-\frac23, -\frac12)$ and $z_2 \in (\frac12, \frac23)$ such that $v$ is regular  and satisfies the pointwise estimates
\begin{equation} \label{eq-P1P2}
|\nabla_x^m v| \le (\G+2)^{C (\mathcal{H}+1)} , \quad m = 0,1,2,
\end{equation}
on the hypersurfaces $\mathcal{P}_1 = \{ z = z_1 , \varrho<\frac15, -\frac45 < t \le 0\} \subset Q(1)$ and $\mathcal{P}_2 = \{ z =  z_2, \varrho<\frac15, -\frac45 < t \le 0\} \subset Q(1)$. Using the epoch of regularity in Proposition \ref{prop-epoch}, we can find $t_0 \in (-\frac23, -\frac12)$ such that $v$ is regular and satisfies the pointwise estimates
\begin{equation} \label{eq-P0}
|\nabla_x^m v| \lesssim \G^{C},\quad  m = 0,1,2,
\end{equation}
on the hypersurface $\mathcal{P}_0 = \{t = t_0, |x| < \frac45\} \subset Q(1)$. Denote the domains $\mathcal{D} = \{ z_1 < z < z_2, \varrho<\frac15, t_0<t < 0\}$ and $\mathcal{D}' = \{ z_1 < z < z_2, \varrho<\frac1{10}, t_0<t < 0\}$, and let $\mathcal{D}_t = \mathcal{D} \cap \(\mathbb{R}^3 \times \{t\}\)$, $\mathcal{D}'_t = \mathcal{D}' \cap \(\mathbb{R}^3 \times \{t\}\).$

In the sequel, we shall work in times $t_0<t<T$ where $T$ is the first time for  $v|_\mathcal{D}$ to blow up (we set $T = 0$ if $v$ is regular in $\overline{\mathcal{D}}$).

\textbf{Step 2}. Let $u(\varrho,z,t) = \int_0^\varrho |v^\theta(\varrho',z,t)| d\varrho'$ and $$a(t) = \sup_{(\varrho,z,t) \in \mathcal{D}'}\, \frac{u(\varrho,z,t)}{\varrho}.$$ The function $a$ was introduced by Wei \cite{Wei}. By H\"older's inequality, for $(\varrho,z,t) \in \mathcal{D}'$, we have 
\begin{align}
|u(\varrho,z ,t)|^2 &\le \int_0^\varrho \varrho' d\varrho' \int_0^\varrho \frac{|v^\theta(\varrho', z, t)|^2}{\varrho'} d\varrho' \nonumber \\
&\le \varrho^2 \int_0^\varrho \int_{z_1}^z \frac{|v^\theta(\varrho', z',t)| |\partial_z v^\theta(\varrho', z',t)|}{\varrho'} dz'd\varrho' + \frac{\varrho^2}{2} \int_0^\varrho \frac{|v^\theta(\varrho', z_1, t)|^2}{\varrho'} d\varrho'   \nonumber \\
&\lesssim \varrho^2 \left\|\frac{v^\theta}{\varrho}\right\|_{L^2(\mathcal{D}'_t)} \|J\|_{L^2(\mathcal{D}'_t)} +  \varrho^2  (\G+2)^{C (\mathcal{H}+1)} ,
\end{align}
As a consequence, for $t_0<t<0$, we have
\begin{equation} \label{eq-step2}
a(t)^2 \lesssim \left\|\frac{v^\theta}{\varrho}\right\|_{L^2(\mathcal{D}'_t)} \|J\|_{L^2(\mathcal{D}'_t)} +  (\G+2)^{C(\mathcal{H}+1)} .
\end{equation}

\textbf{Step 3}. Repeating the proof of \cite[Lemma 2.1]{Wei} in the domain $\mathcal{D}'_t$ and using Step 1 and Lemma \ref{lem-est-away} to treat all boundary terms showing up, we obtain the fixed-time estimates
$$\left\|\nabla \frac{v^\varrho}{\varrho}\right\|_{L^2(\mathcal{D}'_t)} \le \|\Omega\|_{L^2(\mathcal{D}'_t)} + (\G+2)^{C(\H+1)},$$
and
$$\left\|\nabla^2 \frac{v^\varrho}{\varrho}\right\|_{L^2(\mathcal{D}'_t)} \le \|\partial_z\Omega\|_{L^2(\mathcal{D}'_t)} + (\G+2)^{C(\H+1)}.$$

\textbf{Step 4}. Let $\varphi(\varrho)$ be a smooth cutoff function satisfying $\varphi(\varrho) = 1$ for $\varrho<\frac{1}{10}$ and $\varphi(\varrho) = 0$ for $\varrho>\frac{1}{5}$. From the equation \eqref{eq-J}, we obtain the energy estimate
\begin{align} \label{eq-0439}
&\quad \ \frac{d}{dt} \|J \varphi\|_{L^2(\mathcal{D}_t)}^2 + 2\|\varphi\nabla J\|_{L^2(\mathcal{D}_t)}^2 + 4\pi \int_{z_2}^{z_1} J^2(0, z, t) dzdt  \nonumber\\
&=  \int_{\mathcal{D}_t} J^2 \(\Delta - \frac{2}{\varrho} \partial_\varrho\) \varphi^2 + \int_{\mathcal{D}_t} J^2 b \cdot \nabla (\varphi^2) + 2\int_{\mathcal{D}_t} J \varphi^2 (\omega^r \partial_r + \omega^z \partial_z) \frac{v^\varrho}{\varrho}  \nonumber\\
& \quad + \int_{P_2} \partial_z (J^2) \varphi^2 - \int_{P_1} \partial_z (J^2) \varphi^2 + \int_{\mathcal{P}_2} v^z J^2 \varphi^2 -  \int_{\mathcal{P}_1} v^z J^2 \varphi^2 
\end{align}
The first two terms on the right of \eqref{eq-0439} are bounded by $(\G+2)^{C}$ using Lemma \ref{lem-est-away}, and the last four boundary terms are bounded by $(\G+2)^{C(\H+1)}$ using \eqref{eq-P1P2}. Moreover, we have the estimate
\begin{align}
&\quad \ \int_{\mathcal{D}_t} J \varphi^2 (\omega^\varrho \partial_\varrho + \omega^z \partial_z) \frac{v^\varrho}{\varrho}dx  \nonumber \\
&= \int_{\mathcal{D}_t} J \varphi^2 [\nabla \times (v^\theta e_\theta)] \cdot \nabla \frac{v^\varrho}{\varrho}dx  \nonumber \\
&= \int_{\mathcal{D}_t} \varphi^2 v^\theta e_\theta \cdot \( \nabla J \times \nabla \frac{v^\varrho}{\varrho} \)dx  +\int_{\mathcal{D}_t} J v^\theta e_\theta \cdot \( \nabla \varphi^2 \times \nabla \frac{v^\varrho}{\varrho} \)dx + \mathrm{boundary \ terms} \nonumber \\
&\le \|\varphi \nabla J\|_{L^2(\mathcal{D}_t)} \left\|v^\theta \nabla \frac{v^\varrho}{\varrho}\right\|_{L^2(\mathcal{D}_t)} + (\G+2)^{C(\mathcal{H}+1)} .
\end{align}
In the last line we used  Lemma \ref{lem-est-away} and \eqref{eq-P1P2} to control the integral away from the $z$-axis and the boundary terms. Hence, from \eqref{eq-0439} we obtain
\begin{align} \label{J-ee}
&\quad \ \frac{d}{dt} \|J \varphi\|_{L^2(\mathcal{D}_t)}^2 + \|\varphi\nabla J\|_{L^2(\mathcal{D}_t)}^2 + 4\pi \int_{z_1}^{z_2} J^2(0, z, t) dz  \nonumber\\
&\le  \left\|v^\theta \nabla \frac{v^\varrho}{\varrho}\right\|_{L^2(\mathcal{D}_t)}^2 + (\G+2)^{C (\mathcal{H}+1)}.  \end{align}
Let $\psi(\varrho)$ be a smooth cut-off function satisfying $\psi(\varrho) = 0$ for $\varrho > \frac1{10}$ and $\psi(\varrho) = 0$ for $\varrho < \frac1{20}$. Again with the help of Lemma \ref{lem-est-away}, \eqref{J-ee} implies
\begin{align} \label{J-ee-new}
&\quad \ \frac{d}{dt} \|J \varphi\|_{L^2(\mathcal{D}_t)}^2 + \|\varphi\nabla J\|_{L^2(\mathcal{D}_t)}^2 + 4\pi \int_{z_1}^{z_2} J^2(0, z, t) dz  \nonumber\\
&\le  \left\|v^\theta \psi \nabla \frac{ v^\varrho}{\varrho}\right\|_{L^2(\mathcal{D}_t)}^2 + (\G+2)^{C (\mathcal{H}+1)}.
\end{align}
Similarly, using \eqref{eq-Omega} we have
\begin{align} \label{eq-omega-new}
&\quad \ \frac{d}{dt} \|\Omega \varphi\|_{L^2(\mathcal{D}_t)}^2 + 2\|\varphi\nabla \Omega\|_{L^2(\mathcal{D}_t)}^2 + 4\pi \int_{z_1}^{z_2} \Omega^2(0, z, t) dz \nonumber\\
&\le  -4 \int_{\mathcal{D}_t} \psi^2 \frac{v^\theta}{r} J \Omega + (\G+2)^{ C (\mathcal{H}+1)}.
\end{align}
Suppose that 
\begin{equation} \label{eq-sigma-Gamma}
\sigma := [2M(\H+1) \ln (\G+2)]^{-\frac32}  \ge \|\Gamma\|_{L^\infty(\mathcal{D})}
\end{equation}
where $M$ is an absolute constant to be determined later. Next, we need two useful inequalities due to Wei (see \cite[Lemma 2.3]{Wei}): for any $f \in H^1(\varrho d\varrho)$ with support in $\varrho < \frac1{10}$ and for any $z_1<z<z_2$,
\begin{equation} \label{eq-Wei1}
\int \frac{|v^\theta(\varrho,z,t)|}{\varrho} |f|^2 \varrho d\varrho \le \sigma^{-\frac13} \int |\partial_\varrho f|^2 \varrho d\varrho + C  \frac{a(t)^2}{\sigma^{\frac73} K(\sigma)^2} \int |f|^2 \varrho d\varrho,
\end{equation}
and
\begin{equation} \label{eq-Wei2}
\int |v^\theta(\varrho,z,t)|^2 |f|^2 \varrho d\varrho \le \sigma^{\frac23} \int |\partial_\varrho f|^2 \varrho d\varrho + C  \frac{a(t)^2}{\sigma^{\frac43} K(\sigma)^2}  \int |f|^2 \varrho d\varrho,
\end{equation}
where $K(\sigma) = \exp\(\sqrt{\sigma^{-\frac43}-1}-1\)$. We need the assumption on the support of $f$ here, because in our definition of $a(t)$, there is a restriction on the range of $\varrho$. Note that $K(\sigma) \ge \exp\(\frac12 \sigma^{-\frac23}\)$, and for $M$ suitably large,
\begin{equation} \label{eq-K2}
\sigma^\frac73 K(\sigma)^2 \ge \exp \(\frac12 \sigma^{-\frac23}\)  = (\G+2)^{M(\H+1)}.
\end{equation}
Define
\begin{equation}
H(t) = \|J\varphi\|_{L^2(\mathcal{D}_t)}^2 + \frac{\sigma^{\frac23}}{2}\|\Omega \varphi\|_{L^2(\mathcal{D}_t)}^2.
\end{equation}
Applying \eqref{eq-Wei1}, \eqref{eq-Wei2} and Step 3 to the right hand sides of \eqref{J-ee-new} and \eqref{eq-omega-new}, and using \eqref{eq-step2}, \eqref{eq-K2} and Lemma \ref{lem-est-away}, we obtain, for $M$ large enough,
\begin{align} \label{eq-ODE}
&\quad \frac{d}{dt} H(t) \nonumber \\
&\lesssim \frac{a(t)^2}{\sigma^{\frac73} K(\sigma)^2}  \(H(t)+ (\G+2)^{ C (\mathcal{H}+1) }\) +  (\G+2)^{C (\mathcal{H}+1)} \nonumber \\
&\le (\G+2)^{- M (\mathcal{H}+1)} \( \left\|\frac{v^\theta}{\varrho}\right\|_{L^2(\mathcal{D}_t)} \|J\varphi\|_{L^2} + (\G+2)^{ C (\mathcal{H}+1) } \) \(H(t)+ (\G+2)^{ C (\mathcal{H}+1) }\) \nonumber \\
&\quad + (\G+2)^{C (\mathcal{H}+1)}  \nonumber \\
&\le (\G+2)^{-M (\mathcal{H}+1)} \left\|\frac{v^\theta}{\varrho}\right\|_{L^2(\mathcal{D}_t)}  H^{\frac32}(t) + (\G+2)^{(C-M) (\mathcal{H}+1)} \left\|\frac{v^\theta}{\varrho}\right\|_{L^2(\mathcal{D}_t)}  H^{\frac12}(t) \nonumber \\
&\quad  + (\G+2)^{(C-M)(\H+1)}H(t) + (\G+2)^{ C (\mathcal{H}+1)} .
\end{align}
Note that $\left\|\frac{v^\theta}{\varrho}\right\|_{L^2_tL_x^2(\mathcal{D})}^2 \le \int_{Q(\frac{9}{10})} |\nabla v|^2 dxdt \lesssim \G+1$. By Step 1, we have $H(t_0) \lesssim (\G+1)^{C}$. Of course, the above energy estimates and the ODE type estimate \eqref{eq-ODE} is valid only for times $t_0<t<T$. Solving \eqref{eq-ODE}, we obtain that $H(t) \le (\G+2)^{C (\mathcal{H}+1)}$ for all $t_0<t<T$, if $M$ is chosen to be sufficiently large. This implies that $v$ is regular in $\mathcal{D} \cap \{t \le T\}$, and hence $T = 0$. Combining this result with Lemma \ref{lem-est-away}, we deduce that under the assumption \eqref{eq-sigma-Gamma}, $v$ is regular in $\overline{Q(\frac12)}$. 

To finish the proof of Theorem \ref{thm-small-Gamma}, we point out that, in the above arguments, we can use $\int_{Q(\frac{9}{10})}|\nabla v|^2 dxdt$ instead of $\mathcal{H}$, and the former integral is bounded by a multiple of $\G+1$. 
\end{proof}

\subsection{Improved CKN criterion}

\begin{proof}[Proof of Theorem \ref{thm-improved-ckn}]
Under the assumption of \eqref{eq-improved-ckn}, we can apply Lemma \ref{lem-log-grow} to obtain
\begin{equation} \label{eq-AFD-loglog}
(A+F+D)(r) \lesssim (\ln |\ln r|)^{2\mu}
\end{equation}
for sufficiently small $r$.  If $\mu$ is sufficiently small, \eqref{eq-AFD-loglog} enables us to apply the oscillation decay result proved in \cite[Proposition 1.2]{ChenTsaiZhang} or \cite{Seregin2}, which implies that $\Gamma = \varrho v^\theta$ enjoys a modulus of continuity at the origin:
\begin{equation}
|\Gamma(x,t)| \le C(v) \exp(-c |\ln r(x,t)|^\frac12),
\end{equation}
where $r(x,t) = |x| +  \sqrt{-t}$ is small. Here $c$ is an absolute constant independent of $v$. For a sufficiently small $r>0$, we can ensure that
$$\|\Gamma\|_{L^\infty(Q(r))} \le \e ( \G[ v^{(r)}, p^{(r)}] + 2)^{-2},$$
where $ v^{(r)}$, $p^{(r)}$ are defined as in \eqref{eq-scaling}, and $\e$ is any positive constant. Note that $\|\Gamma\|_{L^\infty}$ is a scale-invariant quantity. By Theorem \ref{thm-small-Gamma}, $v$ is regular at the origin $(0,0)$.
\end{proof}

\begin{remark} \label{rem-final}
Let us discuss how Theorem \ref{thm-improved-ckn} implies other slightly supercritical Type I criteria in the axially symmetric case. This is similar to \cite[Section 3]{Seregin3}. Denote
$$\EE(r) = A(r) + E(r) + D(r)$$
and 
$$M^{s,l}(r) = r^{l(1-\frac{3}{s} - \frac{2}{l})} \int_{-r^2}^0 \(\int_{B(r)} |v|^s dx\)^{\frac{l}{s}}dt.$$
Under the constraint
\begin{equation} \label{eq-sl}
\frac12 > \frac{3}{s} + \frac{2}{l} - \frac32 > \max \left\{ \frac{1}{2l}, \frac12 - \frac{1}{l} \right\},
\end{equation}
Seregin and Sverak \cite{SShandbook} showed, via interpolation inequalities and the local energy estimates (similar to those in Section \ref{sec-iter}), that
$$\EE(\theta r) \le \frac12 \EE(r) + C(s,l) \(M^{s,l}(r)\)^{\frac{1}{l\(2-\frac{3}{s}-\frac{2}{l}\)}} + C$$
for an absolute small number $\theta>0$. As a consequence, for any pair $(s, l)$ satisfying \eqref{eq-sl}, the condition
$$\limsup_{r\to 0+}{(\ln |\ln r|)^{-\nu} M^{s,l}(r)} < + \infty,$$ 
implies that
$$\limsup_{r\to 0+}{(\ln |\ln r|)^{-\frac{\nu}{l\(2-\frac{3}{s}-\frac{2}{l}\)}}}E(r) < + \infty.$$
Using Theorem \ref{thm-improved-ckn}, we obtain the one-point regularity criterion: for $\nu$ sufficiently small (depending on $s,l$)
$$\limsup_{r\to 0+}{(\ln |\ln r|)^{-\nu} M^{s,l}(r)} < + \infty \implies (0,0) \notin \S[v].$$
As a further corollary (by taking $s, l$ appropriately), we obtain the improvements of the criteria in \cite{SereginActa, CSYT1, CSTY2}: for   sufficiently small  $\kappa, \sigma>0$ and any $0 \le \alpha \le 1$,
$$|v| \le C_* \frac{ ( \ln |\ln \varrho|)^\kappa \cdot (\ln |\ln (-t)|)^\sigma }{\varrho^\alpha \sqrt{-t}^{1-\alpha}}  \ \mbox{in} \ Q(r) \ \mbox{for some} \ r>0  \implies (0,0) \notin \mathcal{S}[v].$$
In particular, by taking $\alpha = 1$, we have the result of Pan \cite{Pan} (the range of $\kappa$ may be different).
\end{remark}

 \appendix
  \renewcommand{\appendixname}{Appendix~\Alph{section}}

  \section{Appendix}

\begin{proof}[Proof of Lemma \ref{lem-Gamma-bound}]
Recall that for regular solutions, $\Gamma$ satisfies the equation
\begin{equation} \label{eq-Gamma}
\partial_t \Gamma - \Delta \Gamma + \frac{2}{\varrho} \partial_\varrho \Gamma + b \cdot \nabla \Gamma  = 0
\end{equation}
where $b = v^\varrho e_\varrho + v^z e_z$.  It is already proved in \cite{Seregin0} that $\Gamma$ is locally bounded for any axially symmetric local suitable weak solution. Let $\phi$ be a smooth cutoff function compactly supported in $Q(1)$. Let us check that \eqref{eq-Gamma} holds in the weak sense in $Q(1)$ and can be tested with $\Gamma^q \phi^2$ for any $q \ge 0$. It is well-known that, due to the partial regularity Theorem \ref{thm-partial-reg-old}, $v$ is regular away from the $z$-axis. Hence, \eqref{eq-Gamma} holds strongly away from the $z$-axis. Moreover, the projection to the $z$-axis of the singular set $\mathcal{S}[v]$ is of measure zero, that is, there exists a set $\mathcal{S}_z\subset (-1,1)$ of measure zero such that $v$ is regular in $\{z \notin \mathcal{S}_z\} \cap Q(1)$. For $z \notin S_z$, we know that $\Gamma$ is continuous and takes the value $0$  at $r=0$.  Consider a cut-off function $\psi(\varrho)$ such that $\psi = 1$ for $\varrho > 1$ and $\psi = 0$ for $0 \le \varrho<\frac12$, and denote $\psi_\e(\varrho) = \psi\(\frac{\varrho}{\e}\)$. Testing \eqref{eq-Gamma} with $\Gamma^q \psi_\e \phi^2$ we obtain for $-1<T<0$,
\begin{align} \label{eq-0043}
&\quad \ \int_{t=T} \Gamma^{q+1} \psi_\e \phi^2 dx + 2q \int_{t\le T} |\nabla \Gamma^{\frac{q+1}{2}}|^2 \psi_\e \phi^2 dxdt \nonumber \\
&= \int_{t \le T} \Gamma^{q+1} \(\psi_\e\partial_t \phi^2 + \Delta (\psi_\e\phi^2) + \frac{2}{\varrho} \partial_r (\psi_\e\phi^2)\) dxdt +  \int_{t \le T} \Gamma^{q+1} b \cdot \nabla(\psi_\e \phi^2)  dxdt
\end{align}
Note that  as $\e \to 0$, we have
\begin{align}
\int_{\supp \phi} |\Gamma|^{q+1} (|\Delta \psi_\e| + \frac{1}{\varrho}|\nabla \psi_\e|) dxdt \lesssim \frac{1}{\e} \iint \int_{\varrho = \frac{\e}{2}}^{\e} |\Gamma|^{q+1} \mathbf{1}_{\supp \phi} d\varrho dz dt \to 0,
\end{align}
and
\begin{align}
\int_{\supp \phi} |\Gamma|^{q+1} |v^\varrho| |\nabla \psi_\e| dxdt &\lesssim  \iint \int_{\varrho = \frac{\e}{2}}^{\e} |\Gamma|^{q+1} \frac{|v^\varrho|}{\varrho}  \mathbf{1}_{\supp \phi}  \varrho d\varrho dz dt \nonumber \\ 
&\le \|\Gamma\|_{L^\infty( {\supp \phi} )}^{q+1} \iint \int_{\varrho = \frac{\e}{2}}^{\e} \frac{|v^\varrho|}{\varrho}   \mathbf{1}_{\supp \phi} \varrho  d\varrho dz dt \to 0,
\end{align}
In the last line we are using that $\frac{v^\varrho}{\varrho} \in L_t^2L_x^2$ since it is part of the Dirichlet integral of $v$. Hence we can let $\e \to 0$ in \eqref{eq-0043} to obtain
\begin{align} \label{eq-043-new}
&\quad \ \int_{t=T} \Gamma^{q+1} \phi^2 dx + 2q \int_{t\le T} |\nabla \Gamma^{\frac{q+1}{2}}|^2  \phi^2 dxdt \nonumber \\
&= \int_{t \le T} \Gamma^{q+1} \(\partial_t \phi^2 + \Delta \phi^2 + \frac{2}{\varrho} \partial_\varrho \phi^2\) dxdt +  \int_{t \le T} \Gamma^{q+1} b \cdot \nabla \phi^2  dxdt
\end{align}
for $-1<T<0$. 

Next, we use the Moser iteration technique to derive \eqref{eq-Gamma-upper-1}. Let $r_k = \frac34 + 10^{-k}$, $k \ge 1$. Let $\phi_k(|x|,t)$ be a cut-off function supported in $Q(r_k)$, which equals to $1$ in $Q(r_{k+1})$, with the (uniform in $k$) pointwise estimates $|\nabla \phi_k| + |\partial_t \phi_k| \lesssim 10^{k}$ and $|\Delta \phi_k| + |\nabla \phi_k|^2 \phi_k^{-1} \lesssim 10^{2k}$. The last term in \eqref{eq-043-new} can be estimated as
\begin{align} \label{eq-0048}
\int_{t \le T} \Gamma^{q+1} b \cdot \nabla \phi_k^2  dxdt  &\le \|b\|_{L^{\frac{10}{3}}(Q(\frac{9}{10}))} \|\Gamma^{\frac{q+1}{2}} |\nabla \phi_k|^{2} \phi_k^{-1}\|_{L^2}^{\frac12} \| \Gamma^{\frac{q+1}{2}} \phi_k\|_{L^{\frac{10}{3}}}^{\frac32} \nonumber \\
&\le C 10^{4k} (\G+1)^2 \|\Gamma^{\frac{q+1}{2}}\|_{L^2(Q(r_k))}^{2} +  \nonumber \\
&\quad \ + \frac{1}{10} \| \Gamma^{\frac{q+1}{2}} \phi_k\|_{L_t^\infty L_x^2}^2 + \frac{1}{10} \| \nabla(\Gamma^{\frac{q+1}{2}} \phi_k)\|_{L_t^2 L_x^2}^2
\end{align}
Hence, we can deduce from \eqref{eq-043-new} and the known interpolation inequality that
\begin{align}
\|\Gamma^{ \frac{q+1}{2}}\|_{L^{\frac{10}{3}}(Q(r_{k+1}))}^{2} &\lesssim \| \Gamma^{\frac{q+1}{2}} \phi_k\|_{L_t^\infty L_x^2}^2 +  \| \nabla(\Gamma^{\frac{q+1}{2}} \phi_k)\|_{L_t^2 L_x^2}^2 \nonumber \\
&\lesssim 10^{4k} (\G+1)^2 \|\Gamma^{\frac{q+1}{2}}\|_{L^2(Q(r_k))}^{2}
\end{align}
We let $\frac{q+1}{2} = (\frac{5}{3})^{k} $ in the above estimate to get
\begin{equation}
\|\Gamma\|_{L^{2\times (\frac{5}{3})^{k+1}}(Q(r_{k+1}))} \le \( C 10^{2k} (\G+1) \)^{ (\frac35)^{k}} \|\Gamma\|_{L^{2\times (\frac{5}{3})^{k}}(Q(r_{k}))}
\end{equation}
After iteration and letting $k \to \infty$, we arrive at 
\begin{align}
\|\Gamma\|_{L^\infty(Q(\frac34))} &\le \(C(\G+1)\)^{\sum_{k \ge 1} (\frac35)^{k}} 10^{\sum_{k \ge 1}2k (\frac35)^{k}} \|\Gamma\|_{L^{\frac{10}{3}}(Q(r_1))} \nonumber \\
&\lesssim (\G+1)^{\frac32} \|v\|_{L^{\frac{10}{3}}(Q(r_1))} \lesssim  (\G+1)^{2}.
\end{align}

To prove \eqref{eq-Gamma-upper-2}, we notice that in $\varrho \ge \frac1{10}$, $v^{\varrho,z,\theta}$ and $\Gamma$ can be viewed as functions in two variables $\varrho,z$ and the 3D volume element is equivalent to the 2D volume element $\varrho d\varrho dz \sim d\varrho dz$. For instance, using the known interpolation inequality in 2D we have
\begin{align}
\|v\|_{L^4(Q(\frac{9}{10})\cap\{\varrho \ge \frac{1}{10}\})} &\lesssim \|v\|_{L_t^\infty L_x^2(Q(\frac{9}{10})\cap\{\varrho \ge \frac{1}{10}\})} +  \|\nabla v\|_{L_t^2 L_x^2(Q(\frac{9}{10})\cap\{\varrho \ge \frac{1}{10}\})} \nonumber \\
&\stackrel{\eqref{eq-energies-G}}\lesssim (\G+1)^{\frac12}
\end{align}
Hence, instead of \eqref{eq-0048}, we can use the estimate
\begin{align} \label{eq-0048-new}
\int_{t \le T} \Gamma^{q+1} b \cdot \nabla \phi^2  dxdt  &\le \|b\|_{L^{4}(Q(\frac{9}{10})\cap\{r \ge \frac{1}{10}\})} \|\Gamma^{\frac{q+1}{2}} |\nabla \phi| \|_{L^2} \| \Gamma^{\frac{q+1}{2}} \phi\|_{L^{4}} \nonumber \\
&\le C(\G+1) \|\Gamma^{\frac{q+1}{2}} |\nabla \phi| \|_{L^2}^2 +   \nonumber \\
&\quad + \frac{1}{10} \| \Gamma^{\frac{q+1}{2}} \phi\|_{L_t^\infty L_x^2}^2 + \frac{1}{10} \| \nabla(\Gamma^{\frac{q+1}{2}} \phi)\|_{L_t^2 L_x^2}^2
\end{align}
for a cut-off function $\phi$ supported in $Q(\frac{9}{10}) \cap \{\varrho \ge \frac1{10}\}$. Using suitable (axially summetric) cut-off functions and repeating the above iteration argument, it is easy to derive that
\begin{align}
\|\Gamma\|_{L^\infty(Q(\frac34)\cap \{\varrho > \frac14\} )} &\lesssim (\G+1)^{\sum_{k \ge 2} 2^{-k}}  \|\Gamma\|_{L^{4}(Q(\frac{9}{10}) \cap \{\varrho \ge \frac{1}{10}\})} \nonumber \\
&\lesssim (\G+1)^{\frac12} \|v\|_{L^{4}(Q(\frac{9}{10}) \cap \{\varrho \ge \frac{1}{10}\})} \lesssim  \G+1.
\end{align}
\end{proof}

\begin{proof}[Proof of Lemma \ref{lem-est-away}]
Let us denote $W_i = Q(R + (1-\frac{i}{5})(1-R)) \cap \{\varrho > \frac{i}{5} \delta\},\ i = 1,2,3,4,5 $ so that $W_5 \subset W_4 \subset W_3 \subset W_2 \subset W_1$. The proof is divided into three successive steps. The constants in the proof may depend on $R$ and $\delta$.

\textbf{Step 1.} Due to the partial regularity Theorem \ref{thm-partial-reg-old}, $v$ is regular away from the $z$-axis. Recall that, for regular solutions, the function $\Omega = \frac{\omega^\theta}{\varrho} = \frac{\partial_z v^\varrho - \partial_\varrho v^z}{\varrho}$ satisfies the equation 
\begin{equation} \label{eq-Omega}
\partial_t \Omega - \Delta \Omega -\frac{2}{\varrho} \partial_\varrho \Omega + b \cdot \nabla \Omega = \frac{\partial_z \Gamma^2}{\varrho^4}
\end{equation}
where $b = v^\varrho e_\varrho + v^z e_z$. Let $\phi$ be a smooth axially symmetric cut-off function, which equals $1$ in $W_2$ and vanishes in $Q(1) \setminus W_1$. Testing \eqref{eq-Omega} with $\Omega \phi^2$, we obtain for any $-1<T<0$,
\begin{align} \label{eq-043}
&\quad \ \int_{t=T} \Omega^2 \phi^2 dx + 2\int_{t\le T} |\nabla \Omega|^2 \phi^2 dxdt \nonumber \\
&=  \int_{t \le T} \Omega^2 (\partial_t \phi^2 + \Delta \phi^2 - \frac{2}{\varrho} \partial_\varrho \phi^2) dxdt +  \int_{t \le T} \Omega^2 b \cdot \nabla \phi^2  dxdt - \int_{t \le T} \frac{\Gamma^2}{\varrho^4} \partial_z (\Omega \phi^2) \nonumber \\
&=:I_1 + I_2 + I_3
\end{align}
We treat $I_i$ term by term. $I_1$ is the most easy one,
\begin{align} \label{eq-044}
|I_1| \lesssim \int_{\supp \phi} |\Omega|^2 dxdt \lesssim \int_{Q(\frac{9}{10})} |\nabla v|^2 dxdt \stackrel{\eqref{eq-energies-G}}\lesssim \G + 1.
\end{align}
For $I_2$, we note that in $W_1$, $v^{\varrho,z,\theta}$ and $\Omega$ can be viewed as functions in two variables $\varrho,z$ and the 3D volume element is equivalent to the 2D volume element $\varrho d\varrho dz \sim d\varrho dz$. Hence, we can use H\"older's inequality and interpolation inequality in 2D to deduce
\begin{align} \label{eq-045}
|I_2| &\le \|b\|_{L^4(W_1)} \|\Omega \nabla \phi\|_{L^2}  \|\Omega \phi\|_{L^4} \nonumber \\
&\lesssim (\G+1) \(\|\Omega \phi\|_{L^\infty_t L^2_x} + \|\nabla(\Omega \phi)\|_{L_t^2L_x^2} \)  \nonumber \\
&\le C\delta^{-1} (\G+1)^2 +  \delta\(\|\Omega \phi\|_{L^\infty_t L^2_x}^2 + \|\nabla(\Omega \phi)\|_{L_t^2L_x^2}^2 \)
\end{align}
For $I_3$, we have
\begin{align} \label{eq-046}
|I_3| &\le \int_{t \le T} \frac{\Gamma^2}{\varrho^4} |\partial_z (\Omega\phi)| \phi + \int_{t \le T} \frac{\Gamma^2}{\varrho^4} |\partial_z \phi| |\Omega| \phi \nonumber\\
 &\le C\delta^{-1} \|v^\theta\|_{L^4(W_1)}^4 + \delta\|\nabla (\Omega\phi)\|^2_{L_t^2L_x^2} + C\|v^\theta\|_{L^4(W_1)}^2 \|\Omega\|_{L^2_tL_x^2(W_1)} \nonumber \\
&\le C\delta^{-1} (\G+1)^2 + \delta \|\nabla (\Omega\phi)\|^2_{L_t^2L_x^2(Q(1))} + C(\G+1)^{\frac32} \nonumber \\
&\le C\delta^{-1} (\G+1)^2 + \delta \|\nabla (\Omega\phi)\|^2_{L_t^2L_x^2(Q(1))}
\end{align}
Combining \eqref{eq-043}--\eqref{eq-046}, we obtain
\begin{align}
\sup_{-1<t<0} \int \Omega^2 \phi^2 dx + \int_{Q(1)} |\nabla \Omega|^2 \phi^2 dxdt \lesssim (\G+1)^2
\end{align} 
In particular, we have
\begin{align} \label{eq-0424}
\sup_{-1<t<0} \int (\omega^\theta)^2 \phi^2 dx \lesssim (\G+1)^2
\end{align}

\textbf{Step 2.} Using the  div-curl system
\begin{align}
&\partial_z v^\varrho -\partial_\varrho v^z =\omega^\theta \nonumber\\
&\partial_\varrho v^\varrho  + \partial_z v^z = - \frac{v^\varrho}{\varrho}
\end{align}
in $W_2$ and (two-dimensional) Sobolev embedding, we obatin, for any $q < +\infty$,
\begin{align}
\|b \|_{L^\infty_t L^q_x(W_3)} \lesssim_q \|\omega^\theta \phi\|_{L^\infty_t L^2_x(W_2)} + \|b\|_{L^\infty_t L^2_x(W_2)} \stackrel{\eqref{eq-0424}}\lesssim \G+1.
\end{align}
By Lemma \ref{lem-Gamma-bound}, we have $\|v^\theta\|_{L^\infty(W_3)} \lesssim_q \G+1$. Then, using the standard decomposition of pressure (see the proof of Lemma \ref{lem-multiplicative}), we obatin
\begin{align}
\|p\|_{L^{\frac32}_t L^{\frac{q}{2}}_x(W_4)} &\lesssim \|v \|_{L^\infty_t L^q_x(W_3)}^2 + \|p\|_{L^{\frac32}_t L^{\frac32}_x(W_3)} \lesssim (\G+1)^2.
\end{align}

\textbf{Step 3.} For $0 < r < \frac{1}{10}\min \left\{ \delta, (1-R) \right\}$ and $w \in W_5$, we have $Q(r,w) \subset W_4$, and 
\begin{align}
&\quad  \ \frac{1}{r^2}\int_{Q(r,w)} \(|v|^3 + |p|^\frac32\) dxdt  \nonumber \\
&\lesssim \frac{1}{r^2} \(\|v\|_{L^\infty_t L^q_x(W_4)}^3 r^{5-\frac{9}{q}} + \|p\|_{L^{\frac32}_t L^{\frac{q}{2}}_x}^\frac32 r^{3-\frac{9}{q}}\) \nonumber \\
&\lesssim_q (\G+1)^3 \( r^{3-\frac{9}{q}} +  r^{1-\frac{9}{q}}\) \lesssim (\G+1)^3 r^{1-\frac{9}{q}}.
\end{align}
Take $q = 36$ and $r < c (\G+1)^{-4}$ for a small constant $c$, then 
\begin{align}
\frac{1}{r^2}\int_{Q(r,w)} \(|v|^3 + |p|^\frac32\) dxdt  \le \e_*.
\end{align}
Using Theorem \ref{thm-ereg} and rescaling, we deduce that $v$ is regular in $Q(\frac{r}{2},w)$ with the estimates
\begin{equation}
\|\nabla_x^m v\|_{L^\infty(Q(\frac{r}{2},w))} \lesssim_m r^{-m-1} \lesssim (\G+1)^{4m+4}, \quad m \ge 0.
\end{equation}
Since $w \in W_5$ is arbitrary, the theorem is proved.

\end{proof}

\section*{Acknowledgement}
We would like to thank professor Gang Tian and professor Dongyi Wei for helpful discussions. The authors were in part supported by NSFC (Grant No. 11725102), Sino-German Center
Mobility Programme (Project No. M-0548) and Shanghai Science and Technology Program (Project
No. 21JC1400600 and No. 19JC1420101).

\bibliographystyle{plain}
\bibliography{ns}

\end{document}